\numberwithin{equation}{section}
\theoremstyle{plain} 
\newtheorem{thm}[equation]{Theorem}
\newtheorem{lem}[equation]{Lemma}
\newtheorem{prop}[equation]{Proposition}
\newtheorem{claim}[equation]{Claim}
\theoremstyle{definition}
\newtheorem{defn}[equation]{Definition}
\theoremstyle{remark}
\newtheorem{rem}[equation]{Remark}
\title[Tunnel-number-one knot exteriors in $S^3$]{Tunnel-number-one knot exteriors in $S^3$ disjoint from proper power curves}
\author{Sungmo Kang}
\email{skang4450@chonnam.ac.kr}
\begin{document}







\begin{abstract}
As one of the background papers of the classification project of hyperbolic primitive/Seifert knots in $S^3$ whose complete list is given in \cite{BK20},
this paper classifies all possible R-R diagrams of two disjoint simple closed curves $R$ and $\beta$ lying in the boundary of a genus two handlebody $H$ up to equivalence such that $\beta$ is a proper power curve and a 2-handle addition $H[R]$ along $R$ embeds in $S^3$ so that $H[R]$ is the exterior of a tunnel-number-one knot. As a consequence, if $R$ is a nonseparating simple closed curve on the boundary of a genus two handlebody such that $H[R]$ embeds in $S^3$, then there exists a proper power curve disjoint from $R$ if and only if $H[R]$ is the exterior of the unknot, a torus knot, or a tunnel-number-one cable of a torus knot.

The results of this paper will be mainly used in proving the hyperbolicity of P/SF knots and in classifying P/SF knots in once-punctured tori in $S^3$, which is one of the types of P/SF knots in \cite{BK20}. Together with these results, the preliminary of this paper which consists of three parts: the three diagrams which are Heegaard diagrams, R-R diagrams, and hybrid diagrams, `the Culling Lemma', and locating waves into an R-R diagrams, will also be used in the classification of hyperbolic primitive/Seifert knots in $S^3$.
\end{abstract}

\maketitle


\section{Introduction}\label{Introduction and main result}

In \cite{B90} or an available version \cite{B18}, Berge constructed twelve families of knots that admit lens space surgeries. These knots are referred to as the Berge knots and described in terms of double-primitive or primitive/primitive(or simply P/P) curves. A P/P curve $\alpha$ is a simple closed curve lying in a genus two Heegaard surface $\Sigma$ of $S^3$ bounding two handlebodies $H$ and $H'$ such that $\alpha$ is primitive in both $H$ and $H'$, i.e., 2-handle additions $H[\alpha]$ and $H'[\alpha]$ along $\alpha$  are solid tori. Then we say that a knot $k$ represented by $\alpha$ is a P/P knot and $(\alpha, \Sigma)$ is a P/P position of $k$. Note that a knot may have more than one P/P position.

For a surface-slope $\gamma$, which is defined to be an isotopy class of $\partial N(k)\cap \Sigma$, where $N(k)$ is a tubular neighborhood of $k$ in $S^3$, the knot $k$ represented by $\alpha$ admits lens space Dehn surgery. The Berge conjecture claims that the Berge knots cover all knots in $S^3$ admitting lens space Dehn surgeries. Much progress for this conjecture has been made, but it is still unsolved. One result toward this conjecture is that all P/P knots are the Berge knots, which is proved in \cite{B08} or independently in \cite{G13}. This implies that the Berge knots are the complete list of P/P knots.

In \cite{D03}, Dean introduced primitive/Seifert(or simply P/SF) knots, which is a natural generalization of P/P knots. A primitive/Seifert curve $\alpha$ is a simple closed curve lying in a genus two Heegaard surface $\Sigma$ of $S^3$ bounding two handlebodies $H$ and $H'$ such that $\alpha$ is primitive in one handlebody, say, $H'$, and $\alpha$ is Seifert in $H$, that is to say, $H'[\alpha]$ is a solid torus and $H[\alpha]$ is a Seifert-fibered space. Similarly as in P/P knots, we say that a knot $k$ represented by $\alpha$ is a P/SF knot and $(\alpha, \Sigma)$ is a P/SF position of $k$. Also for a surface-slope $\gamma$, since $k(\gamma)\cong H[\alpha]\cup_\partial H'[\alpha]$ by Lemma 2.3 of \cite{D03}, $\gamma$-Dehn surgery $k(\gamma)$ is either a Seifert-fibered space over $S^2$ with at most three exceptional fibers or a Seifert-fibered space over $\mathbb{R}P^2$ with at most two exceptional fibers.
Note that by \cite{EM92}, a connected sum of lens spaces can not arise as a Dehn surgery $k(\gamma)$ for hyperbolic P/SF knots.

P/SF knots are of interest, because knots with Dehn surgeries yielding Seifert-fibered spaces are not well understood. Some examples of P/SF knots are given in \cite{D03}, \cite{MM05}, and \cite{EM02}. However, as the classification of P/P knots is complete, there is a project to classify all hyperbolic P/SF knots. This project has been worked for years and has recently been achieved. See \cite{BK20} for the complete list of hyperbolic P/SF knots along with the surface-slope of the exceptional surgery on each knot that yields a Seifert-fibered space, and the indexes of each exceptional fiber in the resulting Seifert-fibered space.

The project for the classification of hyperbolic P/SF knot requires various backgrounds. This paper provides one of the background materials.
Now we explain the results of this paper. Let $R$ be a nonseparating simple closed curve on the boundary of a genus two handlebody $H$. Suppose that a 2-handle addition $H[R]$ along $R$ embeds in $S^3$, i.e., $H[R]$ is an exterior of a knot $k$ in $S^3$. Then it follows that $k$ is a tunnel-number-one knot in $S^3$ whose tunnel
is the cocore of the 2-handle. Suppose $\beta$ is another simple closed curve in $\partial H$ disjoint from $R$ which is a proper power curve. A proper power curve is defined to be a simple closed curve in $\partial H$ such that it is disjoint from an essential separating disk in $H$, does not bound a disk in $H$, and is not primitive in $H$.

The goal of this paper is to classify such curves $R$ and $\beta$ in terms of R-R diagrams. R-R diagrams are one way of describing simple closed curves on the boundary of a genus two handlebody. They are a type of planar diagram related to Heegaard diagrams. They are originally introduced by Osborne and Stevens in \cite{OS77} and developed by Berge. Definition and properties of genus $g$ R-R diagrams are given in Section~\ref{R-R diagrams}. Then the main results of this paper are as follows.

\begin{thm}\label{main theorem 1}
Suppose $R$ and $\beta$ are disjoint simple closed curves on the boundary of a genus two handlebody $H$ such that $H[R]$ embeds in $S^3$ and $\beta$ is a proper power curve.
Then $R$ and $\beta$ have an R-R diagram with the form shown either in Figure~\emph{\ref{DPCFig21b-1}a} with $s>1$ or Figure~\emph{\ref{DPCFig21b-1}b} with $m, n > 0, s>1$, $\gcd(m,n) = 1$ such that the set of parameters $(a,b,m,n,s)$ satisfies the condition \emph{(1), (2),} or \emph{(3)} below.
\begin{enumerate}
  \item $a + b = 1$ and, \emph{(}assuming without loss that $(a,b)$ = $(1,0)$\emph{)}, $sm - un = \delta$ for some $u$ with $0<u<s$, which implies that $H[R]$ is the exterior of an $(n, s)$ torus knot in $S^3$.
  \item $a,b > 0$, $m = 1$, and $s(a+b) - u[(a+b)n +b] = \delta$, which implies that $H[R]$ is the exterior of an $((a+b)n +bm, s)$ torus knot in $S^3$.
  \item $a,b > 0$, $n> m > 1$, $u = 1$, and $ms(a+b) - [(a+b)n +bm] = \delta$, which implies that $H[R]$ is the exterior of an $(ms(a+b) \pm1,s)$ cable about an $(a+b,m)$ torus knot in $S^3$ and a separating essential annulus bounded by two parallel copies of the curve $\beta$ is the cabling annulus in $H[R]$.
\end{enumerate}
\end{thm}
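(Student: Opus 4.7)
The plan is to exploit the constraints imposed by $\beta$ being a proper power curve to pin down the structure of the R-R diagram of $(R,\beta)$, then apply the Culling Lemma and wave-location arguments from the preliminary to reduce to a canonical form, and finally use the hypothesis $H[R]\subset S^3$ to derive the arithmetic conditions and identify $H[R]$.

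First, from the proper power hypothesis, $\beta$ is disjoint from an essential separating disk $D\subset H$. Cutting $H$ along $D$ produces two solid tori $V_1,V_2$, with $\beta\subset\partial V_1$. Since $\beta$ neither bounds a disk in $H$ nor is primitive, its homology class in $V_1$ is $s$ times a generator for some $s>1$; hence $\beta$ consists of $s$ parallel copies of a primitive $(m,n)$-curve on $\partial V_1$ with $\gcd(m,n)=1$. An R-R diagram for $(R,\beta)$ adapted to the decomposition $H=V_1\cup_D V_2$ then places $\beta$ in the form shown in Figure \ref{DPCFig21b-1}a (when $\beta$ is a core of $V_1$ iterated $s$ times) or Figure \ref{DPCFig21b-1}b (in general, with $m,n>0$).

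Second, since $R$ is simple, nonseparating, and disjoint from $\beta$, in the adapted R-R diagram it is described by parameters $(a,b)$ counting crossings of $D$ and a winding parameter $u$, subject to the $\gcd$ condition that $R$ is a single curve. I would apply the Culling Lemma to replace $(R,\beta)$, without altering $H[R]$, by a minimally intersecting diagram, and then use the wave-location analysis: any wave in the reduced diagram either further reduces complexity, contradicting minimality, or violates the constraint that $H[R]$ embeds in $S^3$. This step pins the diagram down to one of the two listed shapes and normalizes the parameter set to $(a,b,m,n,s,u)$.

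Third, I split into cases by the value of $a+b$ and by whether $m=1$. If $a+b=1$, then WLOG $(a,b)=(1,0)$ and $R$ spans $V_1$ just once; a handleslide collapses $H[R]$ to a solid torus on whose boundary $\beta$ appears as an $(n,s)$-cable, so $H[R]$ is the exterior of the $(n,s)$-torus knot, and the $S^3$-embedding forces $sm-un=\delta$ for some $u$ with $0<u<s$. If $a,b>0$ and $m=1$, a direct slope computation on the induced genus-one Heegaard torus of $H[R]\subset S^3$ identifies the knot as an $((a+b)n+bm,s)$-torus knot, yielding $s(a+b)-u[(a+b)n+b]=\delta$. If $a,b>0$ and $n>m>1$, then the annulus $A\subset\partial H$ cobounded by two parallel copies of $\beta$, together with the solid torus $V_1$, assembles inside $H[R]$ into the cabling annulus of an $(a+b,m)$-torus knot, and $R$'s intersection pattern specifies an $(ms(a+b)\pm1,s)$-cable; the identifications $u=1$ and $ms(a+b)-[(a+b)n+bm]=\delta$ follow.

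The main obstacle is the second step: eliminating every R-R configuration not on the list. The delicate point is that the $\gcd$ simple-closed-curve conditions mesh nontrivially with the global embedding constraint $H[R]\hookrightarrow S^3$, so ruling out spurious parameter families requires a careful, case-by-case wave-location analysis on the culled diagram; the arithmetic equations in (1)--(3) are precisely the obstructions to the presence of a further wave, and pinning them down is where most of the real work lies.
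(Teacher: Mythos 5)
Your overall skeleton --- set up the R-R diagram from the proper power structure, restrict it via waves and the Culling Lemma, then do arithmetic case analysis --- matches the paper's strategy, but the central step is asserted rather than proved. The heart of the paper's argument is Theorem~\ref{at least one of a, b, c is zero}: when $R$ meets the $A$-handle in three nonempty bands of connections ($a,b,c>0$), $H[R]$ cannot embed in $S^3$. You dispose of this with ``any wave in the reduced diagram either further reduces complexity, contradicting minimality, or violates the embedding constraint,'' but that is not what happens. The actual elimination requires: (i) applying the Culling Lemma not to $R$ itself but to the \emph{second-generation} distinguished wave $\omega_1$ based at the meridian representative $M_1$, showing it would meet $\beta$ once (Lemma~\ref{contradiction if neither m1 or m2 has a cut-vertex}); (ii) a separate analysis when $M_1$ or $M_2$ has a cut-vertex, where one shows either that $H[R,M_1]$ is Seifert-fibered over $S^2$ with three exceptional fibers (via the curves $f$, $\tau$ and the twist $\tau(f)$ representing $B^u$), or that an explicit linking-form inequality such as $(j+1)s[(m-1)(b+c)+b] > (j+m)(b+c)+b+\delta$ obstructs $H[R,M_1]$ from being a homology sphere. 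None of this is recoverable from a ``minimality'' argument, and your description of the Culling Lemma as a device for replacing $(R,\beta)$ by a minimally intersecting diagram misstates it: it is an obstruction on waves of the meridian, not a diagram-reduction procedure. You also omit the normalization step (the paper's Lemmas~\ref{may assume (m,n) ?(1,1)} and~\ref{may assume m,n not zero}), which uses hybrid diagrams and changes of cutting disks to rule out $(m,n)=(\pm1,\pm1)$ and $mn=0$; without it the reduction to Figure~\ref{DPCFig21b-1}b with $m,n>0$ is incomplete.

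A second, smaller gap is in your case analysis. The trichotomy (1)--(3) does not come from splitting on $a+b$ and on $m=1$; it comes from the form of the wave-surgered meridian $M=A^mB^u$ with $0<u<s$. When $m,u>1$, $M$ is a Seifert fiber relator and $H[R,M]$ acquires three exceptional fibers unless $a+b=1$ --- that is where $a+b=1$ in case (1) is \emph{derived}, not assumed. The remaining cases $m=1$ and $u=1$ give (2) and (3), and the condition $n>m$ in (3) is forced by the determinant equation. Your version leaves $0<u<s$, $u=1$ in case (3), and $n>m$ unexplained, and your handleslide argument for case (1) inverts the logical direction. The identification of the cable in case (3) via the annulus cobounded by copies of $\beta$ is essentially the paper's argument and is fine in outline.
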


\begin{thm}
\label{main theorem2}
Suppose $R$ is a nonseparating simple closed curve on the boundary of a genus two handlebody such that $H[R]$ embeds in $S^3$. Then there exists a proper power curve disjoint from $R$ if and only if $H[R]$ is the exterior of the unknot, a torus knot, or a tunnel-number-one cable of a torus knot.
\end{thm}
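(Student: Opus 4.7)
The forward direction is immediate from Theorem~\ref{main theorem 1}. If a proper power curve $\beta$ disjoint from $R$ exists, then $(R,\beta)$ admits an R-R diagram as in Figure~\ref{DPCFig21b-1}, and the parameters must satisfy one of the three cases there. In case (1), $H[R]$ is the exterior of an $(n,s)$ torus knot, which is the unknot when $n=1$ or $s=1$; in case (2), $H[R]$ is a torus knot exterior; in case (3), $H[R]$ is the exterior of a tunnel-number-one cable of a torus knot. Hence $H[R]$ is of one of the stated types.

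For the reverse direction, given a nonseparating $R$ on $\partial H$ with $H[R]$ the exterior of a knot of one of the stated types, I need to produce a proper power curve $\beta \subset \partial H$ disjoint from $R$. My plan is to exploit the Seifert-fibered structure of $H[R]$: the unknot and torus knot exteriors are Seifert-fibered over a disk with at most two exceptional fibers, and a tunnel-number-one cable of a torus knot decomposes along the cabling annulus into a torus knot exterior and a cable space, both Seifert-fibered. For each such knot and each unknotting tunnel, I choose a genus-two Heegaard surface of $S^3$ containing the tunnel and realizing $\partial H$, and then take $\beta$ to be a component of its intersection with either a regular fiber annulus (in cases (1), (2)) or the cabling annulus (in case (3)). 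Reading off an R-R diagram that realizes the resulting pair $(R,\beta)$ as in Figure~\ref{DPCFig21b-1} verifies that $\beta$ is disjoint from an essential separating disk in $H$, bounds no disk in $H$, and is not primitive in $H$, so $\beta$ is a proper power curve.

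The main obstacle in this plan is ensuring that \emph{every} nonseparating $R$ with $H[R]$ of the stated type admits a disjoint proper power curve, not merely some preferred one, since these knots can have multiple unknotting tunnels. I would handle this by invoking the classifications of unknotting tunnels: the Boileau--Rost--Zieschang classification for torus knots and the Morimoto--Sakuma classification for tunnel-number-one cables of torus knots. Each classified tunnel is isotopic to one arising from the Seifert-fibered or cable decomposition, so the construction above supplies a compatible $\beta$ in every case. A secondary routine check, done by direct inspection of the constructed R-R diagram, confirms the three defining properties of a proper power curve; the second assertion in case (3) of Theorem~\ref{main theorem 1} also guarantees that the two parallel copies of $\beta$ in the cable situation recover the cabling annulus, as expected from the construction.
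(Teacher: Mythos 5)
Your proposal is correct and follows essentially the same route as the paper: the forward direction is read off from Theorem~\ref{main theorem 1}, and the reverse direction reduces, exactly as in Propositions~\ref{the converse true for torus knots} and~\ref{the converse true for cable knots}, to the Boileau--Rost--Zieschang and Morimoto--Sakuma classifications of unknotting tunnels in order to guarantee that \emph{every} such $R$ carries one of the standard R-R diagrams in which a disjoint proper power curve is visible. The only cosmetic difference is that the paper dispatches the unknot case by observing that $R$ is then primitive in $H$, rather than folding it into the Seifert-fibered discussion.
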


\begin{figure}[tbp]
\centering
\includegraphics[width = 0.7\textwidth]{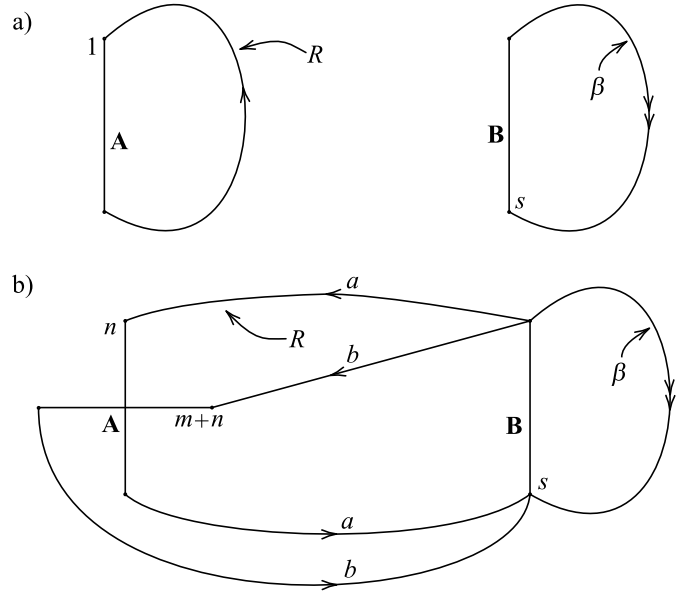}
\caption{All possible R-R diagrams of disjoint simple closed curves $R$ and $\beta$ up to equivalence on $H$ such that $H[R]$ embeds in $S^3$ and $\beta$ is a proper power curve.}
\label{DPCFig21b-1}
\end{figure}

The results of this paper are used in the classification project of hyperbolic P/SF knots in $S^3$. Especially, they are essential in proving the hyperbolicity of P/SF knots and in classifying P/SF knots in once-punctured tori in $S^3$, which is one of the types of P/SF knots in \cite{BK20}. Briefly speaking for the application, if $k$ has a P/SF position $(\alpha, \Sigma)$ such that $\alpha$ is primitive in $H'$ and is Seifert in $H$, then since $\alpha$ is primitive in $H'$, there exists a complete set of cutting disks $\{D_M, D_R\}$ of $H'$ such that the boundary $M$ of $D_M$ intersects $\alpha$ once and the boundary $R$ of $D_R$ is disjoint from $\alpha$. This implies that $M$ is a meridional curve of $k$ and $H[R]$ is homeomorphic
to the exterior of $k$ in $S^3$, and furthermore $k$ is a tunnel-number-one knot such that $R$ is the boundary of a cocore of the 1-handle regular neighborhood of a tunnel. Therefore we can conclude that if $k$ has a P/SF position $(\alpha, \Sigma)$, then there exists a simple closed curve $R$ in the boundary of $H$ such that $H[R]$ embeds in $S^3$ as the exterior of a tunnel-number-one knot. Theorem~\ref{main theorem 1} indicates that there is a close relationship between such a curve $R$ and the existence of a proper power curve.

Together with the results of this paper, the preliminary of this paper will also be used in the classification project of hyperbolic P/SF knots in $S^3$. It consists of three parts: three diagrams that are Heegaard diagrams, R-R diagrams, and hybrid diagrams, locating waves in genus two R-R diagrams, and the Culling Lemma. Since hyperbolic P/SF knots in \cite{BK20} are described in terms of R-R diagrams, Section~\ref{Heegaard diagrams, R-R diagrams, and Hybrid diagrams} deals with some basics on the three diagrams. Locating waves in genus two R-R diagrams and the Culling Lemma, which are given in Sections \ref{Locating waves in genus two R-R diagrams} and \ref{Culling lists of knot exterior candidates} respectively, are related to find a meridian of a P/SF knot or its exterior $H[R]$, which is one of the steps to find all hyperbolic P/SF knots.

More specifically, Section~\ref{Heegaard diagrams, R-R diagrams, and Hybrid diagrams} deals with some basics on the three diagrams. We combine a Heeagaard diagram and a R-R diagram to make a hybrid diagram, and we make use of it to transform an R-R diagram which has underlying Heegaard diagram with a cut vertex into that with no cut vertex. In Section~\ref{Locating waves in genus two R-R diagrams} we introduce one of the results of \cite{B20}, so-called ``Waves provide meridians", which is originated from \cite{B93}, and consider how to locate waves based at a nonspearating simple closed curve $R$ into an R-R diagram of $R$. When an R-R diagram of $R$ such that $H[R]$ embeds in $S^3$ is given, the location of a wave into the R-R diagram needs to be understood in order to find a meridian of $H[R]$ which can be obtained by surgery on $R$ along a wave due to the result of \cite{B20}.
In Section~\ref{Culling lists of knot exterior candidates}, the ``Culling Lemma'' is established, which is a tool to cull out meridian candidates of $H[R]$.\\

\noindent\textbf{Acknowledgement.} In 2008, in a week-long series of talks to a seminar in the department of mathematics of the University of Texas as Austin, John Berge outlined a project to completely classify and describe the primitive/Seifert knots in $S^3$. The present paper, which provides some of the background materials necessary to carry out the project, is originated from the joint work with John Berge for the project. I should like to express my gratitude to John Berge for his support and collaboration. I would also like to thank Cameron Gordon and John Luecke for their support while I stayed in the University of Texas at Austin.

\section{Heegaard diagrams, R-R diagrams, and Hybrid diagrams}\label{Heegaard diagrams, R-R diagrams, and Hybrid diagrams}

In this section we describe some basics on Heegaard diagrams, R-R diagrams, and hybrid diagrams, which will be used in the other sections.

\subsection{Heegaard diagrams}\hfill
\smallskip

First, we deal with the definition and property of Heegaard diagram of simple closed curves
lying in the boundary of a genus two handlebody, and some of its terminologies that are
needed in this paper.

Suppose $\mathcal{C}$ is a finite set of pairwise disjoint nonparallel simple closed curves in the boundary
of a genus two handlebody $H$, none of the curves bound disks in $H$, and
$\{D_A, D_B\}$ is a complete set of cutting disks of $H$. Cutting $H$
open along $D_A$ and $D_B$ cuts $\mathcal{C}$ into set of arcs
$E(\mathcal{C})$ and cuts $H$ into a 3 ball $W$.
Then $\partial W$ contains disks $D_A^+, D_A^-, D_B^+,$ and $D_B^-$
such that gluing $D_A^+$ to $D_A^-$ and $D_B^+$ to $D_B^-$ reconstitutes $\mathcal{C}$
and $H$. Figures~\ref{ASK-14} and \ref{ASK-145} illustrate this process.

\begin{figure}[tbp]
\centering
\includegraphics[width = 0.6\textwidth]{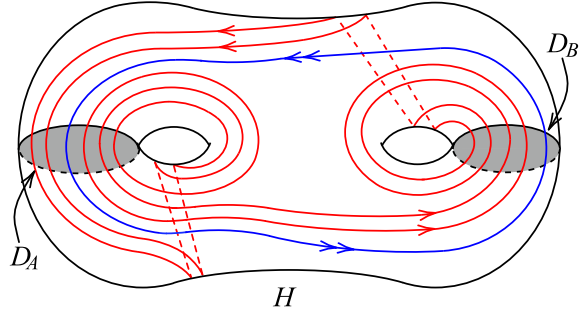}
\caption{Two simple closed curves in the boundary of a genus two handlebody.}
\label{ASK-14}
\end{figure}

\begin{figure}[tbp]
\centering
\includegraphics[width = 0.6\textwidth]{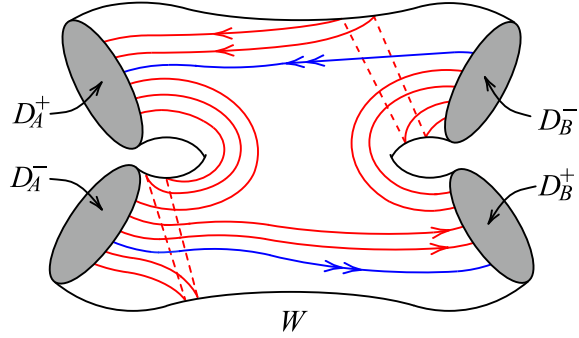}
\caption{A 3-ball $W$ obtained by cutting $H$ open along $D_A$ and $D_B$.}
\label{ASK-145}
\end{figure}

\begin{figure}[tbp]
\centering
\includegraphics[width = 0.7\textwidth]{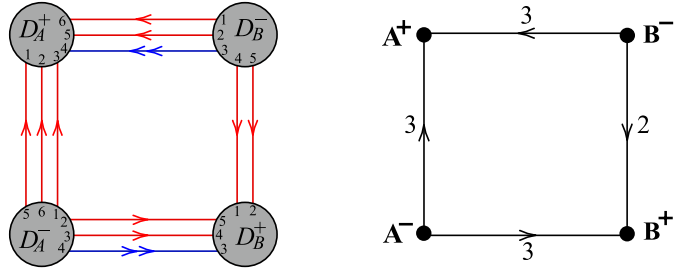}
\caption{Heegaard diagram and its underlying graph.}
\label{ASK-146}
\end{figure}

The set of arcs $E(\mathcal{C})$ forms the edges of \textit{Heegaard diagrams} in
$\partial W$ with ``fat", i.e., disk rather than point, vertices $D_A^+, D_A^-, D_B^+,$
and $D_B^-$. If one ignores how $D_A^+$ and $D_B^+$ are identified with $D_A^-$ and $D_B^-$
to reconstitute $H$, the set of arcs $E(\mathcal{C})$ also forms the edges of a graph
in $\partial W$ with vertices $D_A^+, D_A^-, D_B^+,$ and $D_B^-$. Let $G$ denote
the graph in $\partial W$ whose edges are the arcs in $E(\mathcal{C})$.
Then $G$ is the graph underlying the Heegaard diagram of $\mathcal{C}$.
Some simplification can be made. If there are $n$ parallel edges in $G$,
then we merge these parallel edges into one and recode this edge
by placing the integer $n$ near the edge. Also we smash the disks
$D_A^+, D_B^-, D_B^+,$ and $D_B^-$ in $\partial W$ into points denoted
by $A^+, A^-, B^+,$ and $B^-$. Figure~\ref{ASK-146} shows the Heegaard diagram and the underlying graph of the curves
$k_1$ and $k_2$ in Figure~\ref{ASK-14}.
Note that this graph is not just abstract graph, since it inherits specific
embeddings in the 2-sphere $S^2$ which is homeomorphic to $\partial W$
from the Heegaard diagram which it underlie.

The following lemma, which can be found in \cite{HOT80} or \cite{O79}, shows some possible types of
graphs of Heegaard diagrams of simple closed curves on the boundary of a genus two handlebody.

\begin{figure}[tbp]
\centering
\includegraphics[width = 1.0\textwidth]{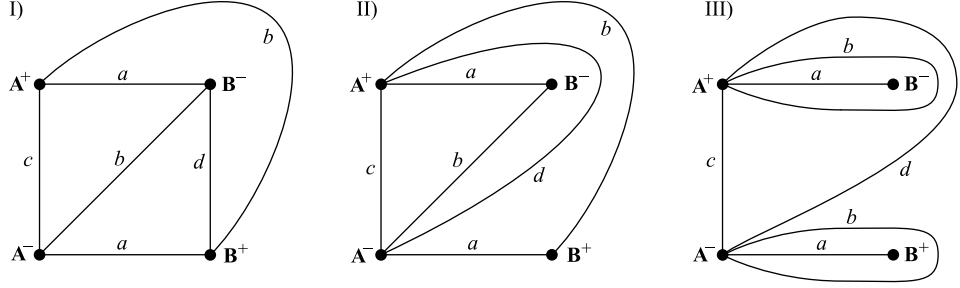}
\caption{The three types of graphs of Heegaard diagrams of simple closed curves on
the boundary of a genus two handlebody $H$ which has cutting disks $D_A$ and $D_B$,
excluding diagrams in which simple closed curves are disjoint from both $\partial D_A$ and $\partial D_B$. }
\label{DPCFig8a-3}
\end{figure}

\begin{lem}
\label{3 types of genus two diagrams}
Let $H$ be a genus two handlebody with a set of cutting disks $\{D_A,D_B\}$ and let $\mathcal{C}$ be a finite set of pairwise disjoint nonparallel simple closed curves on $\partial H$ whose intersections with $\{D_A, D_B\}$ are essential and not both empty. Then, after perhaps relabeling $D_A$ and $D_B$, the Heegaard diagram of $\mathcal{C}$ with respect to $\{D_A, D_B\}$ has the form of one of the three graphs in Figure~\emph{\ref{DPCFig8a-3}}.
\end{lem}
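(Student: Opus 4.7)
The plan is to reduce the lemma to a combinatorial classification of planar graphs on $S^2$ with four distinguished vertices. Cutting $H$ open along $D_A$ and $D_B$ produces the 3-ball $W$ of Figure~\ref{ASK-145}, and collapsing the four discs $D_A^\pm, D_B^\pm$ to points $A^\pm, B^\pm \in \partial W \cong S^2$ turns the arcs of $E(\mathcal{C})$ into the edges of an embedded graph $G \subset S^2$ with exactly these four vertices. Every edge of $G$ joins two of $\{A^+, A^-, B^+, B^-\}$, so there are at most six edge-types: the two ``self-return'' types $A^+A^-$ and $B^+B^-$, together with the four ``crossing'' types $A^\varepsilon B^\eta$ with $\varepsilon, \eta \in \{+,-\}$. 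The hypothesis that the intersections of $\mathcal{C}$ with $\{D_A, D_B\}$ are essential and not both empty guarantees that $G$ is nonempty, and parallel edges are merged and labelled by a multiplicity as described in the paragraph preceding the lemma.

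First I would record the balance constraint: since the arcs reassemble into closed curves when $D_A^+$ is glued to $D_A^-$ and $D_B^+$ to $D_B^-$, the valences satisfy $\mathrm{val}(A^+) = \mathrm{val}(A^-)$ and $\mathrm{val}(B^+) = \mathrm{val}(B^-)$. Next I would use planarity of $G$ to control which edge-types can coexist. The key geometric fact is that a self-return edge behaves like a chord in a disc: an $A^+A^-$ edge, together with a small arc on $\partial W$ joining $A^+$ to $A^-$, separates $S^2$ into two discs and thereby constrains how $B^+$ and $B^-$ may be joined to each other and to the $A$-vertices by further disjoint arcs. A parallel statement holds with the roles of $A$ and $B$ reversed. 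From this I would deduce that if self-return edges of both types appear, the two self-return bundles must be nested in a very rigid way, under penalty of producing curves in $\mathcal{C}$ that are parallel, not simple, or have inessential intersection with a cutting disc.

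Combining the valence balance with this planar separation analysis leaves only a short list of combinatorial possibilities. At this point I would exploit the symmetry afforded by the phrase ``after perhaps relabeling $D_A$ and $D_B$'': interchanging the two disc labels swaps the roles of the two self-return types, while reversing the $\pm$-labelling on a single disc interchanges the pairing of the crossing types. After this normalization, the surviving graphs are precisely the three shown in Figure~\ref{DPCFig8a-3}. I expect the main obstacle to lie in the planar case analysis, specifically in ruling out the combinatorially plausible graphs that cannot in fact be realized by a family of pairwise disjoint, pairwise non-parallel simple closed curves on $\partial H$ whose intersections with the cutting discs are essential; this is where the essentiality hypothesis and the non-parallelism of the curves of $\mathcal{C}$ are genuinely used, rather than being formal consequences of the graph structure alone.
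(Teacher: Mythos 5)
The paper does not actually prove this lemma: it is quoted from Homma--Ochiai--Takahashi \cite{HOT80} and Ochiai \cite{O79}, so there is no in-paper argument to measure your proposal against. Judged on its own, your outline follows the standard route (cut open to a four-holed sphere, pass to the labelled graph on $S^2$, use valence balance and planarity), and the balance observation is correct and useful --- it forces the weights of the crossing edges to pair up as $w(A^+B^+)=w(A^-B^-)=a$ and $w(A^+B^-)=w(A^-B^+)=b$, which is exactly what makes the complexity come out as $2a+2b+c+d$. But as written the proposal has two genuine gaps rather than being a complete proof.

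First, your assertion that there are ``at most six edge-types'' silently excludes loops based at a single vertex, i.e.\ arcs running from $D_A^+$ back to $D_A^+$ (and similarly for the other three discs). Such an arc corresponds to an adjacent cancelling pair $AA^{-1}$ in the word a curve reads off, and ruling it out is precisely where the hypotheses do real work: one needs to argue that for pairwise disjoint simple closed curves in minimal (bigon-free) position which do not bound discs in $H$ (an assumption stated in the paragraph preceding the lemma, not in the lemma itself), no arc of the cut-open diagram can return to the same side of the same cutting disc. This is not a formal consequence of planarity or of the valence count --- for instance, the boundary of a non-standard separating disc in $H$ meets $\partial D_A\cup\partial D_B$ essentially yet reads a word reducing to the identity, so its graph necessarily contains such loops; it is only excluded because it bounds a disc. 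Your proof needs to isolate and prove this ``no returning arcs'' step. Second, the heart of the lemma --- showing that after the relabelling symmetries the surviving planar configurations are exactly the three of Figure~\ref{DPCFig8a-3} and no others --- is entirely deferred: ``leaves only a short list of combinatorial possibilities'' and ``I expect the main obstacle to lie in the planar case analysis'' are placeholders for the actual content, namely deciding which of the a priori admissible weight patterns $(a,b,c,d)$ and planar arrangements (including whether $a$ and $b$ can be simultaneously nonzero, and how the self-return bundles at $A$ and $B$ can coexist) are realizable by a disjoint, non-parallel, bigon-free curve system compatible with the orientation-reversing identifications $D_A^+\!\to D_A^-$ and $D_B^+\!\to D_B^-$. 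Until that enumeration is carried out, the proposal is a plan rather than a proof; to complete it you would in effect be reproducing the classification of \cite{HOT80} or \cite{O79}.
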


\begin{defn}
[\textbf{Complexity}]
The \emph{complexity} of a Heegaard diagram---without simple closed curve components---is the total number of edges of the diagram. The complexity is denoted by $|\mathcal{C}|$.
\end{defn}

\begin{defn}
[\textbf{cut-vertex}]
If $v$ is a vertex of a connected graph $G$ such that deleting $v$ and
the edges of $G$ meeting $v$ from $G$ disconnects $G$, we say $v$ is a \emph{cut-vertex} of $G$.
\end{defn}

The complexity of the Heegaard diagrams in Figure~\ref{DPCFig8a-3} is $2a+2b+c+d$, and
the Heegaard diagram in Figure~\ref{DPCFig8a-3}c either is not connected or has a cut-vertex.

\begin{defn}
[\textbf{Positive Heegaard Diagram}]
A Heegaard diagram is \emph{positive} if the curves of the diagram can be oriented so that all intersections of curves in the diagram are positive. Otherwise, the diagram is \emph{nonpositive}.
\end{defn}

\noindent See Figures~\ref{DPCFig11i-3} and \ref{DPCFig11h} for examples of a nonpositive diagram and a positive diagram respectively.

\subsection{R-R diagrams}\label{R-R diagrams}\hfill
\smallskip

In this subsection we describe the definition of an R-R diagram and its basic properties.
R-R diagrams are a type of planar diagram related to Heegaard diagrams. These diagrams were originally introduced by Osborne and Stevens in \cite{OS77} and developed by Berge. They are particularly useful for describing embeddings of
simple closed curves in the boundary of a handlebody so that the embedded curves
represent certain conjugacy classes in $\pi_1$ of the handlebody. In addition,
R-R diagrams are much easier to parametrize and to see structural details of systems of curves
than standard Heegaard diagrams.

\subsubsection{Genus $g$ R-R diagrams}\hfill
\label{Genus $g$ R-R diagrams}

\smallskip

We provide the basics of genus $g$ R-R diagrams
describing curves lying in the boundary of a genus $g$ surface.

Consider the following construction of a closed orientable surface $\Sigma_g$ of genus $g$. Start with a planar surface $S_g$ with $g$ boundary components $\beta_i$ and $g$ \emph{handles} $F_i$, $1 \leq i \leq g$, each of which is a once-punctured torus. Then cap off each boundary component of $S_g$ by identifying $\partial F_i$ with $\beta_i$. The resulting surface $\Sigma_g$ is naturally endowed with a set of $g$ pairwise disjoint separating simple closed curves $\beta_i$ for $1 \leq i \leq g$, each of which is the boundary of a corresponding handle $F_i$.

Now suppose $\mathcal{C}$ is a finite collection of pairwise disjoint simple closed curves in $\Sigma_g$. The decomposition of $\Sigma_g$ into a planar component $S_g$ and a set of $g$ handles by the separating curves $\beta_i$, $1 \leq i \leq g$, gives $\Sigma_g$ a structure which can be used to study sets of simple closed curves, such as $\mathcal{C}$, on $\Sigma_g$. Zieschang does this in several places; see e.g. the expository paper \cite{Z88} or \cite{Z62}, \cite{Z63a}, \cite{Z63b}. In Zieschang's terminology, the $\beta_i$s are \emph{belt curves}. It is usually convenient to assume that the number of intersections of the curves in $\mathcal{C}$ with the members of the set of belt curves $\beta_i$ has been minimized by isotopies in $\Sigma_g$. Then a curve $C$ in $\mathcal{C}$ either lies completely in $S_g$, or entirely on one of the $g$ handles $F_i$, or $C$ is cut by its intersections with the belt curves $\beta_i$ into arcs, each either properly embedded and essential in $S_g$, or properly embedded and essential in some handle $F_i$.

\begin{figure}[tbp]
\centering
\includegraphics[width = 0.8\textwidth]{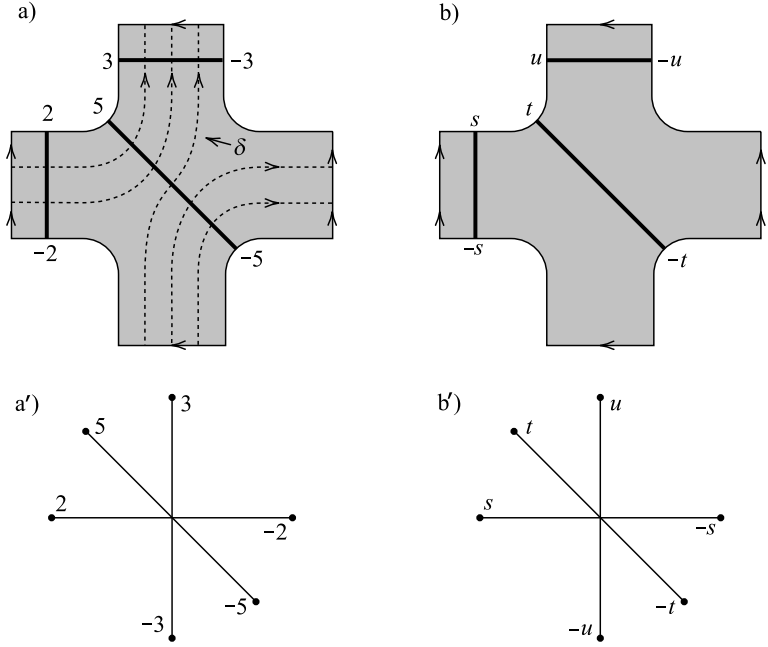}
\caption{Punctured tori handles.}
\label{DPCFig10a}
\end{figure}

A properly embedded essential arc in a handle $F_i$ is a \emph{connection}. Two connections on $F_i$ are \emph{parallel}
if they are isotopic via an isotopy which keeps their endpoints in $\partial F_i$. A collection of pairwise disjoint
connections on a given handle can be partitioned into \emph{bands} of pairwise parallel connections.
Then it is easy to see that, on a once-punctured torus, there can be at most three nonparallel bands of connections.
Figure~\ref{DPCFig10a}a or \ref{DPCFig10a}b illustrates this situation, where the shaded region of Figure~\ref{DPCFig10a}a or \ref{DPCFig10a}b
is an octagon obtained by cutting a punctured torus handle $F$ open along two nonparallel properly embedded essential arcs.
This leaves the four edges of the octagon marked with arrows, which must be identified in pairs to recover the punctured torus.

Note that sets of pairwise nonparallel connections in a once-punctured torus are unique up to homeomorphism. In particular,
if $F$ is a once-punctured torus, $\Delta=\{\alpha_1, \ldots, \alpha_i\}, 1\leq i\leq 3$ is a set of pairwise nonparallel connections
in $F$, and $\Delta'=\{\alpha'_1, \ldots, \alpha'_j\}, 1\leq j\leq 3$ is another set of pairwise nonparallel connections
in $F$, and $i=j$, then there is a homeomorphism of $F$ which takes $\Delta$ to $\Delta'$.

Some simplifications can be made at this point without losing any information about the embedding of the curves of $\mathcal{C}$
in $\Sigma_g$. Suppose $F$ is a handle, and let $\mathcal{C}_{S_g}$ be the set of arcs in which curves of $\mathcal{C}$ intersect
$S_g$. Then each set of parallel connections on $F$ can be merged into a single connection. (This also merges some
endpoints of arcs in $\mathcal{C}_{S_g}$ meeting $\partial F$.)

After such mergers, $F$ carries at most 3 pairwise nonparallel connections. Continuing, after each set of parallel connections
on $F$ has been merged, additional mergers of sets of properly embedded parallel subarcs of $\mathcal{C}_{S_g}$
can also be made; although now whenever, say, $n$ parallel arcs merged into one, this needs to be recorded
by placing the integer $n$, which is called a \textit{weight}, near the single arc resulting from the merger.

Merging parallel connections in each handle turns the set of pairwise disjoint simple closed curves
in $\mathcal{C}$ into a graph $\mathcal{G}$ in $\Sigma_g$ whose vertices are the endpoints
of the remaining connections in each handle. Clearly $\mathcal{G}$ and its embedding in $\Sigma_g$
completely encodes the embedding of the curves of $\mathcal{C}$ in $\Sigma_g$.

A problem with $\mathcal{G}$ is that in general it is not planar. However, $\mathcal{G}$ always
has a nice immersion $I(\mathcal{G})$ in the plane $\mathbb{R}^2$, which becomes an R-R diagram
of the curves of $\mathcal{C}$ in $\Sigma_g$.

To produce $I(\mathcal{G})$, first remove a small disk $D$, disjoint from $\mathcal{G}$, from
the interior of $S_g$. Then embed $S_g-D$ in $\mathbb{R}^2$ so that each handle $F$ bound
disjoint round disks, say $\mathcal{F}$ in $\mathbb{R}^2$.

Next, note that if $\alpha$ and $\alpha'$ are nonparallel connections on a handle $F$,
the endpoints of $\alpha$ separate the endpoints of $\alpha'$ in the belt curve $\partial F$.
It follows that, if $u$ and $v$ in $\partial F$ are the endpoints of a connection
$\alpha$ in $\partial F$, we may assume that $u$ and $v$ bound a diameter $d_\alpha$ in
$\mathcal{F}$. This results in each round disk $\mathcal{F}$ containing
$0, 1, 2,$ or $3$ diameters passing through its center, with each diameter an image
of a connection in $F$, where the number of such diameters depends upon whether $F$ originally
contained respectively $0, 1, 2,$ or $3$ bands of parallel connections. Figure~\ref{DPCFig10a}
shows 3 connections and thus 3 diameters.

Now we encode the endpoints of each diameter, i.e., endpoints of each connection in $F$.
Representatives of three types of connections on $F$ are shown in bold in Figure~\ref{DPCFig10a}a,
along with a nonseparating simple closed curve $\delta$ lying in the interior of $F$,
which appears as the ``dotted'' curve in Figure~\ref{DPCFig10a}a. Then the integer labels
at the ends of the three bands of connections in Figure~\ref{DPCFig10a}a give signed
intersection numbers of any oriented connections lying in the three illustrated bands of connections in $F$ with $\delta$.

Figure~\ref{DPCFig10a}b again shows $F$, but the set of integers, which labeled the ends of
the bands of connections in Figure~\ref{DPCFig10a}a, has been replaced with a set of integer
parameters $\{s,t,u,-s,-t,-u\}$, which appear in clockwise cyclic order $(s,t,u,-s,-t,-u)$
around the boundary $\beta$ of $F$, and the curve $\delta$ has disappeared.
Figures~\ref{DPCFig10a}a$'$ and \ref{DPCFig10a}b$'$ illustrate the labels of the endpoints of corresponding diameters in $F$.
A connection whose endpoints are labeled by $x$ and $-x$ is simply said to be $x$-connection or $-x$-connection.

Now the well-known characterization of the homology classes in
$H_1(F; \mathbb{Z})$ which can be represented by simple closed curves implies the following proposition.

\begin{prop}
There exists a nonseparating simple closed curve $\delta$ in $F$ such that $\delta$ has the algebraic intersection numbers shown in Figure \emph{\ref{DPCFig10a}b} with oriented connections in the bands of connections of Figure \emph{\ref{DPCFig10a}b} if and only if $\gcd(s,u) = 1$ and $t = s + u$.
Furthermore, the isotopy class of $\delta$ in $F$ is uniquely determined by the algebraic intersection numbers of any two nonparallel bands of connections in $F$.
\end{prop}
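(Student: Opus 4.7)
The plan is to translate the geometric question into one about the intersection pairing on $H_1(F;\Z)$ and $H_1(F,\partial F;\Z)$, both free abelian of rank two, and to invoke the standard classification of non-separating simple closed curves on a once-punctured torus by primitive first-homology classes; after this setup the proposition reduces to a short linear-algebraic computation.

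First I would fix oriented representatives $\alpha_s,\alpha_t,\alpha_u$ of the three bands and work with their relative classes $[\alpha_s],[\alpha_t],[\alpha_u]\in H_1(F,\partial F;\Z)$. Poincar\'e--Lefschetz duality supplies a perfect pairing $H_1(F;\Z)\times H_1(F,\partial F;\Z)\to\Z$ by algebraic intersection, and the classification of essential curves on a once-punctured torus yields: (i) every primitive class in $H_1(F;\Z)$ is represented by a unique isotopy class of non-separating simple closed curve, and (ii) two non-parallel essential arcs have relative classes forming a $\Z$-basis of $H_1(F,\partial F;\Z)$. In particular $\{[\alpha_s],[\alpha_u]\}$ is such a basis.

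The key geometric input is to verify, from the octagon decomposition of $F$ in Figure~\ref{DPCFig10a} together with the clockwise cyclic order $(s,t,u,-s,-t,-u)$ of the endpoint labels around $\partial F$, the Farey-type identity
\[
[\alpha_t] \;=\; [\alpha_s] + [\alpha_u]
\]
in $H_1(F,\partial F;\Z)$. Granting this, any non-separating $\delta$ realizing the labels is primitive in $H_1(F;\Z)$, so $\gcd(s,u)=1$, and
\[
t \;=\; \delta\cdot[\alpha_t] \;=\; \delta\cdot[\alpha_s] + \delta\cdot[\alpha_u] \;=\; s+u.
\]
Conversely, given $\gcd(s,u)=1$ and $t=s+u$, the primitive class in $H_1(F;\Z)$ with pairings $s$ and $u$ against $[\alpha_s]$ and $[\alpha_u]$ exists and is unique by perfectness of the pairing, is represented by a unique isotopy class of non-separating simple closed curve $\delta$, and its pairing with $[\alpha_t]=[\alpha_s]+[\alpha_u]$ is automatically $s+u=t$. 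The ``furthermore'' clause is then immediate: any two non-parallel bands supply a basis of $H_1(F,\partial F;\Z)$, so their two intersection numbers with $\delta$ determine $[\delta]\in H_1(F;\Z)$, and hence the isotopy class of $\delta$.

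The main obstacle, and the only step that is not formal linear algebra, is pinning down the Farey identity $[\alpha_t]=[\alpha_s]+[\alpha_u]$ with signs matching the labeling convention. I would handle this by realizing the octagon of Figure~\ref{DPCFig10a} as a fundamental domain for $F$ with the three arcs drawn as segments of slopes $0,\infty,1$, then reading off the relative homology classes and their signs directly from the cyclic arrangement of the six labeled endpoints on $\partial F$.
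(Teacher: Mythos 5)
Your proposal is correct and is essentially the argument the paper has in mind: the paper gives no written proof, merely invoking ``the well-known characterization of the homology classes in $H_1(F;\mathbb{Z})$ which can be represented by simple closed curves,'' and your write-up supplies exactly the details behind that invocation (the perfect intersection pairing with $H_1(F,\partial F;\mathbb{Z})$, the fact that two disjoint nonparallel connections give a basis there, and the Farey/mediant identity $[\alpha_t]=[\alpha_s]+[\alpha_u]$ forced by the cyclic order of the endpoint labels). The only point worth flagging is that ``two nonparallel essential arcs form a basis of $H_1(F,\partial F;\mathbb{Z})$'' requires the arcs to be disjoint (as they are here, being disjoint bands of connections), not merely nonisotopic.
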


Note that if there are two or three bands of connections in $F$ and if the absolute value of one of labels of endpoints of the bands of connections is
greater than $2$, then the labels determine the connections up to a homeomorphism of $F$ which is the identity on $\partial F$. If
all the labels have absolute value $1$ and $2$, or $1$ only, the labels do not determine the connections in $F$.
Furthermore, if there is only one band of connections with label $s>0$, there are $\phi(s)$ inequivalent connections with the same label $s$, where $\phi(s)$
is the number of positive integers $s'$ such that gcd$(s, s')=1$. This is what we want to generalize in the following way.

A generalization occurs when the punctured torus $F$ has been endowed with a pair of embedded oriented simple closed curves, say, $\vec{m}$ and $\vec{l}$, which meet transversely at a single point and form a basis for $H_1(F)$. Then each band of connections on $F$ can be labeled with an ordered pair of integers which represent the coordinates of an oriented connection in that band with respect to the basis formed by $\vec{m}$ and $\vec{l}$. This ordered pair of integers
characterizes the connection up to a homeomorphism of $F$. Furthermore if there are the pairs of labels on distinct bands, then they must satisfy a unimodular determinant condition. That is, if two distinct bands of connections have labels of the form $(s,s')$ and $(u,u')$, then $ su'-us' = \pm 1$. Proposition~\ref{R-R labels and unimodular determinant conditions} and Figure~\ref{DPCFig11d} illustrate the situation.

\begin{prop}
\label{R-R labels and unimodular determinant conditions}
Suppose $\alpha$ and $\beta$ are simple closed curves with an R-R diagram of the form shown in Figure~\emph{\ref{DPCFig11d}a}. If the diagram of Figure~\emph{\ref{DPCFig11d}a} is interpreted as describing embeddings of $\alpha$ and $\beta$ in the punctured torus $T$ shown in Figure~\emph{\ref{DPCFig11d}b} so that $[ \vec{\alpha}] = s[\vec{l}\:] +s'[\vec{m}]$ and $[\vec{\beta}\,] = u[\vec{l}\:] +u'[\vec{m}]$ in $H_1(F)$, then $su'-us' = 1$.
\end{prop}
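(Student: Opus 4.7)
First I would observe that after all parallel connections in each band on $F$ are merged, the restrictions of $\alpha$ and $\beta$ to $F$ consist of single essential arcs $\alpha^{*}$ and $\beta^{*}$ in the once-punctured torus $F$. Because $\alpha$ and $\beta$ are disjoint on $\Sigma_{g}$ and the two bands are nonparallel, $\alpha^{*}$ and $\beta^{*}$ are disjoint and nonparallel. My plan is then to compute the algebraic intersection number of $\alpha^{*}$ and $\beta^{*}$ in two different ways and compare.

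On the algebraic side, I would close each arc up using a subarc of $\partial F$; since $[\partial F] = 0$ in $H_{1}(F)$ (being the commutator $[\vec{m},\vec{l}\,]$), the result is independent of the choice of subarc and yields the prescribed classes $[\vec{\alpha}] = s[\vec{l}\,] + s'[\vec{m}]$ and $[\vec{\beta}\,] = u[\vec{l}\,] + u'[\vec{m}]$. With respect to the symplectic basis $\{[\vec{m}],[\vec{l}\,]\}$, the algebraic intersection number of these two classes on the closed torus is $\pm(su' - us')$, with the sign determined by the orientation of $F$ built into Figure~\ref{DPCFig11d}.

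On the geometric side, I would view $F$ as $T - D$, where $T$ is a flat torus and $D$ is a small open disk about a point $p$, with the flat structure chosen so that $\vec{m}$ and $\vec{l}$ are closed geodesics. After isotoping $\alpha^{*}$ and $\beta^{*}$ to be straight with respect to this metric, shrinking $D$ to $p$ turns them into closed geodesic loops $\bar\alpha,\bar\beta$ through $p$ representing the same primitive homology classes. Since $\alpha^{*} \cap \beta^{*} = \emptyset$ in $F$, all intersections of $\bar\alpha$ and $\bar\beta$ must be confined to the shrinking disk and hence, in the limit, to $p$ itself; since the two classes are linearly independent there is at least one such intersection, and since two distinct flat-torus geodesics cross transversely the intersection at $p$ is a single point. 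Comparing the two counts of intersection then yields $|su' - us'| = 1$.

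The main obstacle I anticipate is pinning down the sign as $+1$ rather than $-1$. Using the orientation of $\partial F$ together with the clockwise cyclic ordering $(s,t,u,-s,-t,-u)$ of connection endpoints shown in Figure~\ref{DPCFig10a}b, one can read off the direction in which each connection crosses representatives of $[\vec{m}]$ and $[\vec{l}\,]$; a direct local calculation of the oriented crossing at $p$ then selects $su' - us' = +1$ over $-1$.
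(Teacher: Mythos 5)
Your argument is essentially correct, but it takes a genuinely different route from the paper, and it also starts from a slightly different reading of Figure~\ref{DPCFig11d}a. The paper's proof is a one-line change-of-coordinates argument: in the figure $\alpha$ and $\beta$ are closed curves in the punctured torus $T$ meeting transversely in a \emph{single point}, so there is an orientation-preserving homeomorphism $\theta$ of $T$ with $\theta(\vec{l}\,)=\vec{\alpha}$ and $\theta(\vec{m})=\vec{\beta}$; the matrix of $\theta_*$ in the basis $\{[\vec{l}\,],[\vec{m}]\}$ has columns $(s,s')$ and $(u,u')$, and its determinant $su'-us'$ must be $+1$ because $\theta$ preserves orientation. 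You instead take $\alpha$ and $\beta$ to be disjoint, with the single intersection point created only when the two nonparallel connections are closed up at the puncture, and you then compute the same number as an algebraic intersection pairing, $\hat\alpha\cdot\hat\beta=\pm(su'-us')$, localizing the geometric count at $p$ via a flat metric. The two framings compute the same thing (disjoint curves in distinct bands have connections whose endpoints alternate on $\partial F$, so their closures in $T$ cross exactly once), so your conclusion is the right one for the intended application to labels on distinct bands. The trade-off is in the sign: orientation-preservation hands the paper $+1$ immediately, whereas your route only yields $|su'-us'|=1$ outright and defers the sign to an orientation chase through the figure's conventions --- the least rigorous step of your proposal, though not a fatal one since the sign is indeed a convention of the figure. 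Your flat-metric limiting argument is also heavier than necessary: once you know the two classes are represented by disjoint nonparallel essential arcs, the fact that they form a basis of $H_1(T)$ (equivalently, that the intersection form, which is unimodular, evaluates to $\pm1$ on them) can be had by cutting $T$ along the two arcs and observing the result is a disk.
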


\begin{proof}
If $\alpha$ and $\beta$ embed in $T$ with $\alpha \cap \beta$ a single point of transverse intersection, as shown in Figure~\ref{DPCFig11d}a, then there is an orientation preserving homeomorphism $\theta$ of $F$ such that $\theta(\vec{l}\:) = \vec{\alpha}$ and $\theta(\vec{m}) = \vec{\beta}$.
If $[ \vec{\alpha}] = s[\vec{l}\:] +s'[\vec{m}]$ and $[\vec{\beta}\,] = u[\vec{l}\:] +u'[\vec{m}]$ in $H_1(F)$, then $\theta$ induces an automorphism of $H_1(F)$ whose determinant is equal to $su'-us'$, which must be plus one, since $\theta$ is orientation-preserving.
\end{proof}

\begin{figure}[tbp]
\centering
\includegraphics[width = 0.7\textwidth]{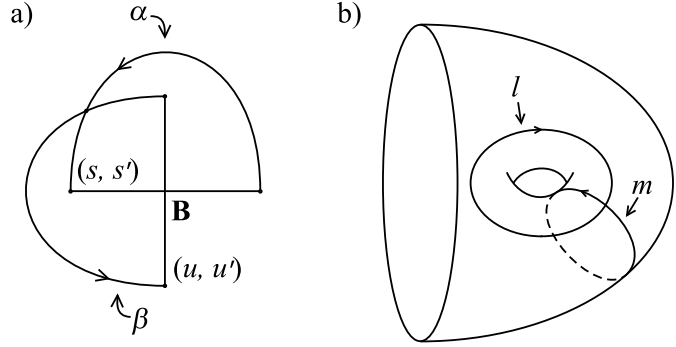}
\caption{A pair of figures, used to show that labels $(s,s')$ and $(u,u')$  of connections on a handle $B$ of an R-R diagram, as shown in Figure~\ref{DPCFig11d}a, must satisfy $su'-s'u = 1$, if $[ \vec{\alpha}] = s[\vec{l}] +s'[\vec{m}]$ and $[\vec{\beta}] = u[\vec{l}] +u'[\vec{m}]$ in $H_1$ of the punctured torus in Figure~\ref{DPCFig11d}b.}
\label{DPCFig11d}
\end{figure}

One practical example of this generalization can be made when we consider knots lying in a genus $g$ Heegaard surface $\Sigma_g$ of $S^3$
which is standardly embedded and bounds two handlebodies $H$ and $H'$ in $S^3$. As constructed at the beginning, suppose $\Sigma_g$ is obtained
from a planar surface $S_g$ with $g$ boundary components $\beta_i$ by gluing $g$ handles $F_i$ along $\beta_i$, $1\leq i\leq g$.
Also suppose for each $i$, $D_{A_i}$ and $D_{A'_i}$ are cutting disks of $H$ and $H'$ respectively such that $\partial D_{A_i}$ and $\partial D_{A'_i}$
lie in a handle $F_i$ and intersect transversely at a point. Then the two curves $\partial D_{A_i}$ and $\partial D_{A'_i}$
give the labelling of one end of a band of connections by $(s, s')$ in $F_i$. The R-R diagram that is constructed in this way
provides an embedding of simple closed curves(or knots) lying in a Heegaard surface of $S^3$ which is standardly embedded in $S^3$. Thus
we make the following terminology.

\begin{defn}
[\textbf{GDS R-R diagram}] \label{GDS R-R diagram}
Let $\Sigma_g$ be a genus $g$ Heegaard surface of $S^3$ bounding two handlebodies $H$ and $H'$ and $\mathcal{C}$ be the set of pairwise disjoint simple closed curves in $\Sigma_g$. Suppose $\Sigma_g$ is obtained from a planar surface $S_g$ with $g$ boundary components $\beta_i$ by gluing $g$ handles $F_i$ along $\beta_i$, $1\leq i\leq g$.
Suppose $\mathfrak{D}=\{D_{A_1}, \ldots, D_{A_g}\}$ and $\mathfrak{D}'=\{D_{A'_1}, \ldots, D_{A'_g}\}$ are complete sets of cutting disks of $H$ and $H'$ respectively such that $\partial D_{A_i}$ and $\partial D_{A'_i}$, $1\leq i\leq g$, lie in a handle $F_i$ and intersect transversely at a point.
Then the R-R diagram of $\mathcal{C}$ on $\Sigma_g$ with respect to $\mathfrak{D}$ and $\mathfrak{D}'$ is said to be a \textit{great disk system} of R-R diagram of $\mathcal{C}$, or simply \textit{GDS R-R diagram} of $\mathcal{C}$. Also each handle $F_i$ in the R-R diagram is said to be an $(A_i, A'_i)$-handle.
\end{defn}

One example of an R-R diagram and a GDS R-R diagram will be given in the next subsection.

\subsubsection{Genus two R-R diagrams}\hfill
\label{Genus two R-R diagrams}

\smallskip

Since we are interested in curves in the boundary of a genus two handlebody,
as a special case, this subsection shows how to make a geuns two R-R diagram, or simply R-R diagram of, for example, two curves $k_1$ and $k_2$ in Figure~\ref{ASK-14}. We follow the procedure given in the previous subsection. Note that the basics of genus two R-R diagrams were also given in \cite{B09} and an example of genus two R-R diagrams was given in \cite{K14}, which we will use here as a same example.

By considering two parallel separating curves, i.e., belt curves
as shown in Figure~\ref{ASK-141}, we decompose the boundary of
$H$ into two handles $F_A$, $F_B$, and one annulus $\mathcal{A}$, so that
the two handles $F_A$ and $F_B$ contain $\partial D_A$ and $\partial D_B$ respectively,
which can be considered as a nonseparating curve $\delta$ in each handle.
Figure~\ref{ASK-142} shows this decomposition.

Note from Figure~\ref{ASK-142} that there are three nonparallel bands of connections in $F_A$, each of which
consists of one connection, and there are two
nonparallel bands of connections in $F_B$, one of which contains one connection and the other contains two.
Therefore there are three diameters in $\mathcal{F}_A$ and two in $\mathcal{F}_B$ as shown in
Figure~\ref{ASK-143}.

\begin{figure}[t]
\centering
\includegraphics[width = 0.6\textwidth]{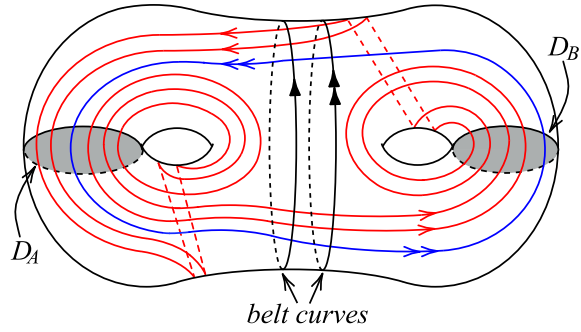}
\caption{Belt curves bounding an annulus in a genus two surface.}
\label{ASK-141}
\end{figure}

\begin{figure}[t]
\centering
\includegraphics[width = 0.7\textwidth]{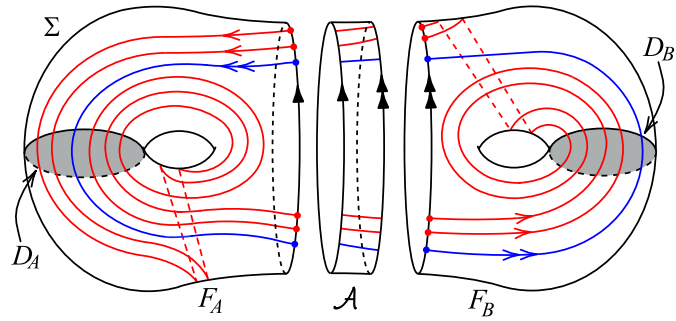}
\caption{An annulus $\mathcal{A}$ and two handles $F_A$ and $F_B$.}
\label{ASK-142}
\end{figure}

\begin{figure}[t]
\centering
\includegraphics[width = 0.7\textwidth]{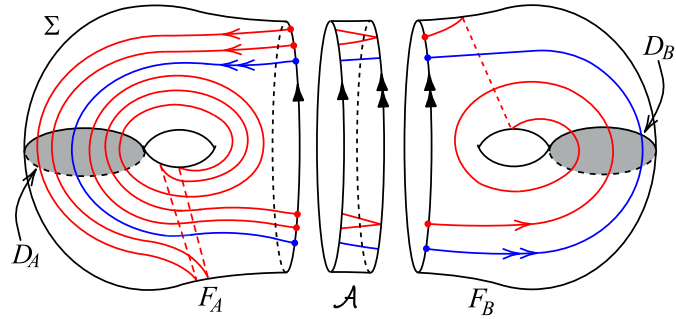}
\caption{Merging the parallel connections and the endpoints of the arcs in the annulus $\mathcal{A}$.}
\label{ASK-1421}
\end{figure}

\begin{figure}[t]
\centering
\includegraphics[width = 0.6\textwidth]{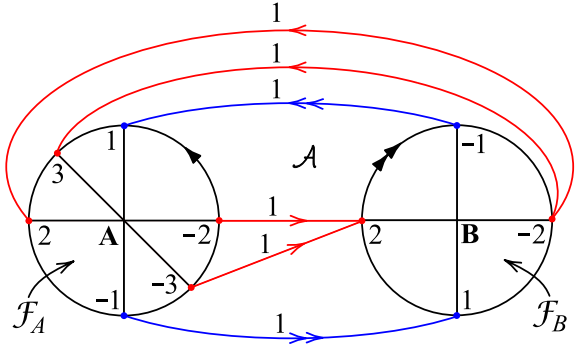}
\caption{Immersion of curves of $\mathcal{C}$ into $S^2$ which becomes an corresponding R-R diagram. }
\label{ASK-143}
\end{figure}

\begin{figure}[t]
\centering
\includegraphics[width = 0.6\textwidth]{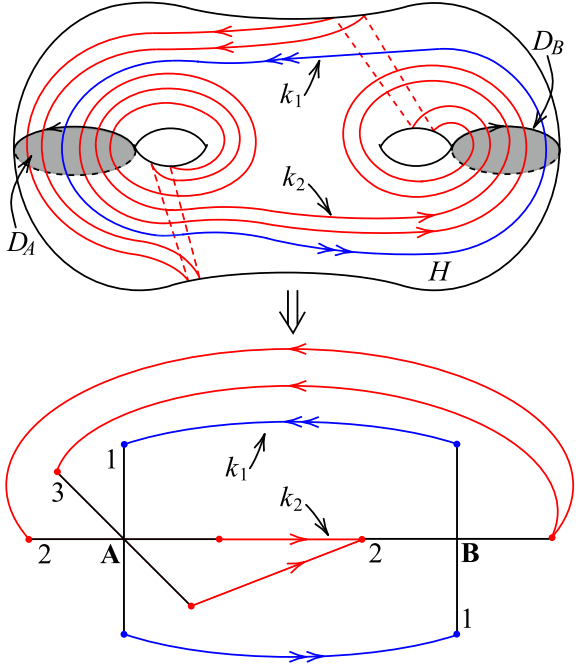}
\caption{Transformation into R-R diagram.}
\label{ASK-144}
\end{figure}

Merging the parallel connections and thus the endpoints of the connections
leads to mergers of the endpoints of arcs in the annulus $\mathcal{A}$ as in Figure~\ref{ASK-1421}.
We observe from Figure~\ref{ASK-1421} that after this merger, no pair of the arcs in $\mathcal{A}$ have the
same endpoints and thus each arc has label $1$.

In order to put the labels of the endpoints of each diameter (or each band of connections),
we take the $\partial D_A$ and $\partial D_B$ in $F_A$ and $F_B$ respectively
as a nonseparating curve $\delta$ in a handle.
Then the integer parameters $s,t,u$ of the diameters on $F_A$(or $F_B$)
imply the intersection numbers with the cutting disk $D_A$(or $D_B$). The labels
of each diameter are given in Figure~\ref{ASK-143}.

With all of the information obtained, we can make an immersion of $k_1$ and $k_2$ into $S^2$ which becomes a corresponding R-R diagram.
First embed the annulus $\mathcal{A}$ in $S^2$ obtained by deleting
two disks from $S^2$ whose boundaries correspond to $\partial F_A$ and $\partial F_B$
as shown in Figure~\ref{ASK-143}. We put the capital letters \textbf{A} and \textbf{B} to indicate
correspondence to the two handles $F_A$ and $F_B$ respectively and we call the corresponding handles as $A$-handle and $B$-handle.
Last, we disregard the boundary circles of $F_A$ and $F_B$ in Figure~\ref{ASK-143} and one of the labels of the endpoints of each band of connections to obtain the corresponding
R-R diagram. Figure~\ref{ASK-144} shows the curves $k_1$ and $k_2$ in $\partial H$ and the corresponding R-R diagram.

Since a complete set of cutting disks $\{D_A, D_B\}$ of $H$ is used in the construction of an R-R diagram, there is a Heegaard diagram \textit{underlying} the R-R diagram, which is obtained by cutting $H$ open along the cutting disks $D_A$ and $D_B$.

R-R diagrams give sufficient information about conjugacy classes of the element represented by a simple closed curve $k$ in $\pi_1(H)$.
$\pi_1(H)$ is a free group $F(A,B)$ which is generated by $A$ and $B$ dual to the cutting disks $D_A$ and $D_B$ respectively.
It follows from Figure~\ref{ASK-144} that $k_1$ and $k_2$ represent the conjugacy classes of $AB$ and $A^3B^2A^2B^2$ respectively in $\pi_1(H)$.

\begin{figure}[tbp]
\centering
\includegraphics[width = 0.6\textwidth]{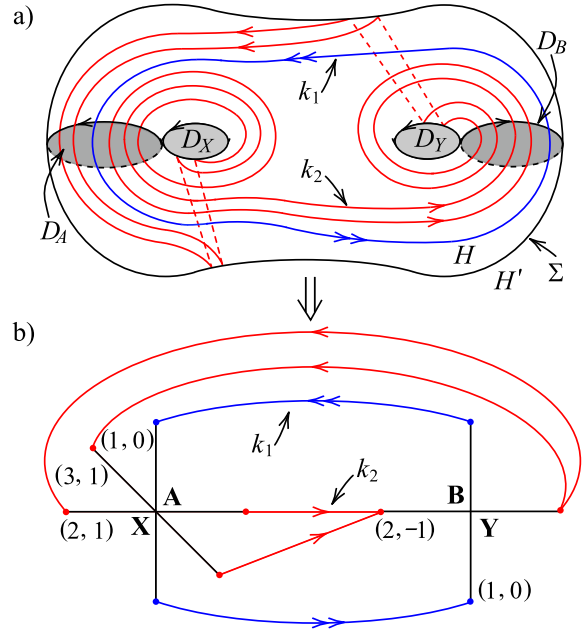}
\caption{Two simple closed curves $k_1$ and $k_2$ lying in a standard genus two Heegaard splitting $(\Sigma, H, H')$ of $S^3$ and their GDS R-R diagrams.}
\label{ASK-147}
\end{figure}

If we consider the handlebody $H$ to be embedded standardly in $S^3$ as in Figure~\ref{ASK-147}a so that the complement of $H$ is another
handlebody $H'$ with a complete set of cutting disks $\{D_X, D_Y\}$ and $\Sigma$ is the common boundary of $H$ and $H'$.
Figure~\ref{ASK-147}b shows a GDS R-R diagram of $k_1$ and $k_2$ with respect to the Heegaard splitting $(\Sigma; H, H')$ with complete sets of cutting disks $\{D_A, D_B\}$ and $\{D_X, D_Y\}$.
Note that $k_1$ has $(1,0)$-connections in both $A$- and $B$-handles, and $k_2$
has $(3, 1)$- and $(2,1)$-connection in the $(A, X)$-handle, and $(2, -1)$-connection in the $(B, Y)$-handle. We see that the labels of the connections satisfy the unimodular
determinant condition in Proposition~\ref{R-R labels and unimodular determinant conditions}.

Now we finish this subsection by defining the equivalence of R-R diagrams.

\begin{defn}
[\textbf{Equivalent R-R diagrams}]\label{Equivalent R-R diagrams}
Let $k$ and $k'$ be two simple closed curves on the boundary of a genus two handlebody $H$. Let $\mathcal{D}_k(\mathcal{D}_{k'}, \textrm{resp}.)$
be an R-R diagram of $k(k', \textrm{resp}.)$ underlying a Heegaard diagram $\mathbb{D}_k(\mathbb{D}_{k'}, \textrm{resp}.)$ of $k(k',$ $\textrm{resp}.)$
with respect to a complete set of cutting disks $\{D_A, D_B\}(\{D_{A'}, D_{B'}\}, \textrm{resp})$.

Then $\mathcal{D}_k$ is \textit{equivalent} to $\mathcal{D}_{k'}$ if there is a homeomorphism of $H$ onto itself sending $\{D_A, D_B\}$ to $\{D_{A'}, D_{B'}\}$ and
sending $k$ to $k'$.
\end{defn}

\smallskip

\subsection{Hybrid diagrams}\hfill
\label{hybrid diagrams}

\smallskip

In this section, we will explain the definition of hybrid diagrams (of genus two)
and its basic properties. Hybrid diagrams are a type of planar
graph which is a combination of Heegaard diagrams and R-R diagrams.

Some R-R diagrams of simple closed curves might be needed to transform
into other R-R diagrams to apply for appropriate theories related to R-R diagrams.
For example, some R-R diagrams might have underlying Heegaard diagrams with a cut-vertex,
which happen when all of the labels of connections in one handle are $0$ or $1$.
In this case we have some difficulty in using theories such as surgery of a curve along a wave described in Theorem~\ref{waves provide meridians},
which can only be applied to Heegaard diagrams
with no cut-vertex. Therefore we need to transform R-R diagram into
that which overlies a Heegaard diagram with no cut-vertex. This transformation can be achieved by a change of cutting disks
of a genus two handlebody $H$ which induces an automorphism on the free group of $\pi_1(H)$. Then hybrid diagrams are useful to perform such a change of cutting disks.

To define hybrid diagrams, we start with an R-R diagram of a simple closed curve $k$. By the construction of R-R diagram from a genus two handlebody $H$ with a complete set
of cutting disks $\{D_A, D_B\}$, there is a separating curve $\beta$ (one of the belt curves) in $\partial H$ splitting $\partial H$ into two handles $F_A$
and $F_B$ such that $F_A$ ($F_B$, respectively) contains $\partial D_A$ ($\partial D_B$, respectively).
Now we consider the Heegaard diagram underlying the R-R diagram, which is obtained by
cutting $H$ open along the cutting disks $D_A$ and $D_B$. The separating curve $\beta$ also separates this diagram into two parts $F'_A$
and $F'_B$ such that $F'_A$ ($F'_B$, respectively) contains $\partial D_A$ ($\partial D_B$, respectively).
Figures~\ref{PPower12}a and \ref{PPower12}b show an example of R-R diagram of a simple closed curve $k$ and its underlying Heegaard diagram with the separating curve $\beta$ in each of the diagrams. Here, we assume that $S,T>0$ and $|S-T|>1$ in Figure~\ref{PPower12}a.
Note that since the labels of the endpoints of the bands of connections in the $A$-handle are only $0$ or $1$, the underlying Heegaard diagram has a cut-vertex.

To get a Heegaard diagram without no cut-vertex, we need to perform a change of cutting disks as follows.
First, replace one of two handles, say, the $A$-handle corresponding to $F_A$ in the R-R diagram by $F'_A$ in the Heegaard diagram.
In the example of the R-R diagram in Figure~\ref{PPower12}a, since a cut-vertex arises due to the $A$-handle where there are only $0$ and $1$-connections,
we ought to replace the $A$-handle in order to obtain a Heegaard diagram with no cut-vertex.
The resulting diagram is shown in Figure~\ref{PPower13}a and is called a \textit{hybrid diagram}.

\begin{figure}[tbh]
\centering
\includegraphics[width = 1\textwidth]{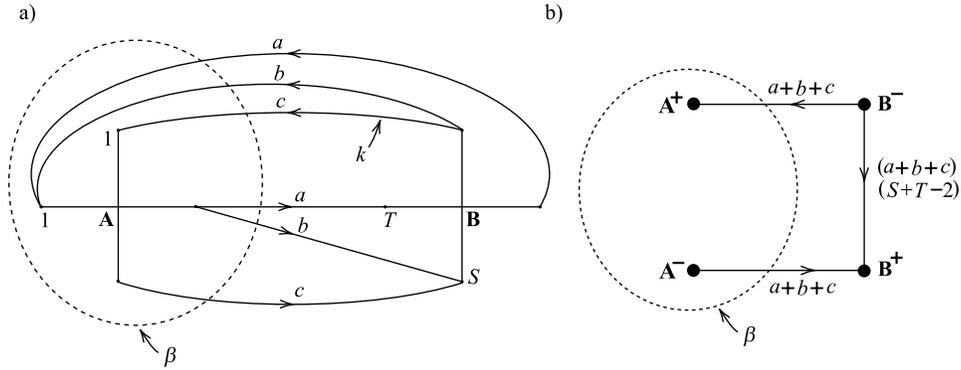}
\caption{An R-R diagram and its underlying Heegaard diagram which has a cut-vertex.}
\label{PPower12}
\end{figure}

Using a hybrid diagram, we can perform a change of cutting disks of a genus two handlebody $H$ with a complete set
of cutting disks $\{D_A, D_B\}$ to obtain a new R-R diagram.
Suppose, for example, a simple closed curve $k$ has a hybrid diagram of the form shown in Figure~\ref{PPower13}a. Now we drag the vertex $A^-$
together with the edges of $k$ over the $S$-connection on the $B$-handle. This performance
corresponds to a change of cutting disks inducing an automorphism of $\pi_1(H)$ which takes $A\mapsto AB^{-S}$ and leaves $B$ fixed.
In other words, with the cutting disk $D_A$ fixed, we change the cutting disk $D_B$ into the cutting disk $D_{B'}$ which
is obtained by bandsumming $D_B$ with $D_A$ along the arcs of $k$ $S$ times. This induces an automorphism of $\pi_1(H)$ which takes $A\mapsto AB^{-S}$ and leaves $B$ fixed.
Then the resulting hybrid diagram is shown in Figure~\ref{PPower13}b,
where the $B$-handle has only one connection labeled by $T-S$. Retrieving the $A$-handle from the resulting hybrid diagram, i.e., the number of
bands of connections in the $A$-handle depends on how the two copies $A^+$ and $A^-$ are identified.
However, since there are $b+c$ edges connecting the vertices $A^+$ and $A^{-}$, the $A$-handle has a band of connection whose label is greater than 1 in the resulting R-R diagram, which implies that the underlying Heegaard diagram has no cut-vertex.

\begin{figure}[t]
\centering
\includegraphics[width = 1\textwidth]{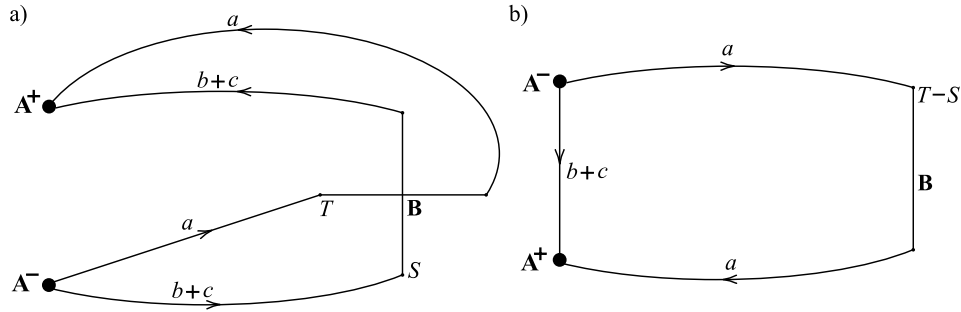}
\caption{Change of cutting disks of a genus two handlebody using a hybrid diagram inducing an automorphism of $\pi_1(H)$ which takes $A\mapsto AB^{-S}$ and leaves $B$ fixed.}
\label{PPower13}
\end{figure}

\section{Locating waves in genus two R-R diagrams}\label{Locating waves in genus two R-R diagrams}

Suppose $R$ is a nonseparating simple closed curve on the boundary of a genus two handlebody $H$ such that $H[R]$ embeds in $S^3$, i.e., $H[R]$ is an exterior of a knot $k$ in $S^3$.
It is shown in \cite{B20} that a meridian of $H[R]$ (or $k$) can be obtained from $R$ by surgery along a wave based at $R$. Recall that a wave on the curve $R$ in $\partial H$ is an arc $\omega$ whose endpoints lies on $R$ with the opposite signs. The following is one of the results of \cite{B20}, which shows how to get a meridian of $H[R]$.

\begin{thm}
[\textbf{Waves provide meridians}]
\label{waves provide meridians}
Let $H$ be a genus two handlebody with a set of cutting disks $\{D_A,D_B\}$ and let $R$ be a nonseparating simple closed curve on $\partial H$ such that the Heegaard diagram $\mathbb{D}_R$ of $R$ with respect to $\{D_A,D_B\}$ is connected and has no cut-vertex. Suppose, in addition, that the manifold $H[R]$ embeds in $S^3$. Then $\mathbb{D}_R$ determines a wave $\omega$, based at $R$, such that if $m$ is a boundary component of a regular neighborhood of $ R \cup \omega $ in $\partial H$, with $m$ chosen so that it is not isotopic to $R$, then $m$ represents the meridian of $H[R]$. Furthermore, the wave $\omega$ determined by $R$ can be obtained as follows:

\begin{enumerate}
\item If $\mathbb{D}_R$ is nonpositive, then $\omega$ is a vertical wave $\omega_v$ which is isotopic to a subarc of the boundary of one of $D_A$ and $D_B$ with which $R$ has both positive and negative signed intersections.
\item If $\mathbb{D}_R$ is positive, then $\omega$ is a horizontal wave $\omega_h$ such that one endpoint of $\omega_h$ lies on an edge of $\mathbb{D}_R$ connecting vertices $A^+$ and $A^-$, while the other endpoint of $\omega_h$ lies on an edge of $\mathbb{D}_R$ connecting vertices $B^+$ and $B^-$.
\end{enumerate}
\end{thm}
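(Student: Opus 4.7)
The plan is to exploit the hypothesis that $H[R]$ embeds in $S^3$ to produce, from a meridian disk of $H[R]$, a wave based at $R$ whose surgery yields the meridian. Since $H[R]$ is a tunnel-number-one knot exterior with tunnels dual to $D_A$ and $D_B$, there is a properly embedded disk $D_m$ in $H[R]$ whose boundary $m \subset \partial H[R]$ is a meridian of the knot. First I would isotope $D_m$ to intersect the cocore disk of the $2$-handle attached along $R$ (call it $D_R$) transversely and so that $|D_m \cap D_R|$ is minimized; the intersection is then a collection of arcs properly embedded in both disks. An outermost such arc $\alpha$ in $D_m$ cuts off a subdisk $\Delta \subset D_m$ whose boundary is $\alpha$ together with an arc $\omega \subset \partial H$ having both endpoints on $R$.

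Next I would verify that $\omega$ is a wave based at $R$: because $\Delta$ is a disk in $H[R]$ with $\partial \Delta$ crossing $R$ twice, the two endpoints of $\omega$ meet $R$ with opposite signs, and minimality of $|D_m \cap D_R|$ rules out $\omega$ being isotopic into $R$. A standard band-sum argument then shows that the component of the frontier of a regular neighborhood of $R \cup \omega$ in $\partial H$ which is not isotopic to $R$ represents $m$: the disk $\Delta$ banded with the copy of $D_R$ produced by surgery along $\alpha$ is a meridian disk of $H[R]$ whose boundary is exactly this frontier component.

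To pin down the location of $\omega$ in $\mathbb{D}_R$, I would analyze it as a properly embedded arc in the planar surface obtained by cutting $\partial H$ open along $D_A \cup D_B$. Connectedness and the no-cut-vertex hypothesis on $\mathbb{D}_R$ are used here to exclude degenerate reductions of Homma--Ochiai--Takahashi type. The positivity dichotomy then splits the argument into two cases. In the positive case, orient $R$ so that all its intersections with $\partial D_A \cup \partial D_B$ are positive; since $\omega$ approaches $R$ from opposite sides at its endpoints, tracking the signs of the edges of $\mathbb{D}_R$ incident to those endpoints forces one endpoint to lie on an edge between $A^+$ and $A^-$ and the other on an edge between $B^+$ and $B^-$, giving the horizontal wave $\omega_h$. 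In the nonpositive case, some cutting-disk boundary, say $\partial D_A$, already realizes oppositely signed intersections with $R$; a subarc of $\partial D_A$ joining two such intersections is an embedded arc in $\partial H$ with endpoints on $R$ of opposite signs, and I would show this subarc may be taken as $\omega$, producing the vertical wave $\omega_v$.

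The main obstacle I anticipate is the last step: showing that the horizontal or vertical form is \emph{forced} rather than merely available, and ruling out extraneous wave configurations. This will require a careful orientation-and-parity analysis using the planar embedding of the underlying graph of $\mathbb{D}_R$, combined with the no-cut-vertex hypothesis so that counting components of $\partial H \setminus R$ and of the planar surface $\partial H \setminus (\partial D_A \cup \partial D_B)$ yields the needed rigidity. The outermost-disk and surgery arguments in the first two paragraphs are standard, but extracting from them the precise combinatorial position stated in (1) and (2) is where the real work lies.
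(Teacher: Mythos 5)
First, a structural point: the paper does not prove this theorem at all --- it is quoted verbatim as ``one of the results of \cite{B20}'' (Berge's preprint \emph{Distinguished waves and slopes in genus two}), so there is no in-paper argument to compare yours against. Judged on its own, your proposal has a genuine error at the very first step. The meridian $m$ of the knot $k$ whose exterior is $H[R]$ does \emph{not} bound a properly embedded disk in $H[R]$; if it did, $H[R]$ would be a $\partial$-reducible knot exterior with compressible meridian, i.e.\ a solid torus, and $k$ would be the unknot. The disk $D_m$ lives in the complementary solid torus $V=\overline{S^3\setminus H[R]}$, and the correct ambient object is the complementary genus-two handlebody $H'=V\cup h_R$ (where $h_R$ is the $2$-handle, viewed from the other side as a $1$-handle attached to $V$), in which \emph{both} $R$ and $m$ bound disks forming a complete cutting system of $H'$. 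But there $D_R$ and $D_m$ are disjoint, so your proposed outermost-arc argument on $D_m\cap D_R$ has nothing to act on. The wave has to be extracted from the interaction of $\{D_R,D_m\}$ with $\{D_A,D_B\}$, i.e.\ from the genus-two Heegaard diagram of $S^3$ determined by $(\partial H;\{\partial D_A,\partial D_B\},\{R,m\})$, via the Homma--Ochiai--Takahashi wave machinery that the paper cites in Lemma~2.2.

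Even granting a repaired setup, the substantive content of the theorem is exactly what your last paragraph defers as ``the real work'': that among the waves guaranteed by the HOT-type theorem one can always find one based at $R$ (rather than at $m$, $\partial D_A$, or $\partial D_B$); that it is \emph{distinguished}, i.e.\ occupies the precise combinatorial position of items (1) and (2) (vertical along a cutting-disk boundary in the nonpositive case, horizontal joining an $A^+A^-$ edge to a $B^+B^-$ edge in the positive case); and that the surgered curve is the meridian and not some other curve bounding in $H'$. None of this is supplied, and the orientation/parity sketch you give does not by itself rule out waves based at the other three curves or pin down uniqueness. So the proposal is not a proof; it reproduces the easy outer shell of the argument while omitting (and, in its first step, mis-stating) the parts that make the theorem nontrivial --- which is presumably why the author imports it wholesale from \cite{B20} rather than proving it here.
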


\begin{figure}[tbp]
\centering
\includegraphics[width = 1\textwidth] {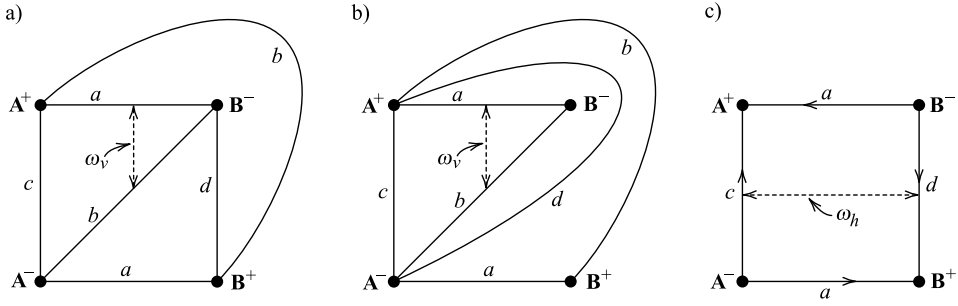}
\caption {A vertical wave $\omega_v$ in a nonpositive Heegaard diagram in a) and b) where $R$ has both positive and negative signed intersections with $D_B$, and a horizontal wave $\omega_h$ in a positive Heegaard diagram in c). They are said to be distinguished
in the sense that they can be used in a surgery on $R$ to obtain a meridian of $H[R]$.}
\label{DPCFig8a-4}
\end{figure}

Figures~\ref{DPCFig8a-4}a and \ref{DPCFig8a-4}b show vertical waves $\omega_v$ when
$R$ has both positive and negative signed intersections with the cutting disk $D_B$ so
that the Heegaard diagram $\mathbb{D}_R$ is nonpositive. Figure~\ref{DPCFig8a-4}c shows a horizon wave $\omega_h$ when $\mathbb{D}_R$ is positive. Vertical waves and horizontal waves which are used to find a representative of a meridian of $H[R]$ as described in Theorem~\ref{waves provide meridians} are said to be \textit{distinguished}.

Using Theorem~\ref{waves provide meridians}, we will be able to rule out some R-R diagrams of simple closed curves $R$ in which $H[R]$ cannot embed in $S^3$, prove the main results of this paper, and characterize R-R diagrams of a torus or cable knot. However
as a prerequisite for this we need to figure out how to locate distinguished waves from Heegaard diagram of a simple closed curve $R$ into its R-R diagram. The rest of this section covers this.

Suppose $\mathcal{D}_R$ is a genus two R-R diagram of a simple closed curve $R$ such that the Heegaard diagram $\mathbb{D}_R$ underlying $\mathcal{D}_R$ has a graph $G_R$ which is connected and has no cut-vertex.

If $\mathcal{D}_R$ is nonpositive, a vertical wave $\omega_v$ is easy to locate in $\mathcal{D}_R$ because it is isotopic to a subarc of the boundary of one of $D_A$ and $D_B$ with which $R$ has both positive and negative signed intersections. For example,
if $R$ appears on $\partial H$ as illustrated in Figure~\ref{DPCFig11i-3}a, then the
Heegaard diagram $\mathbb{D}_R$ is nonpositive such that $R$ has both positive
and negative signed intersections with $\partial D_A$. Therefore $\omega_v$
is isotopic to a subarc of $\partial D_A$ and appears in the Heegaard diagram $\mathbb{D}_R$ and in the R-R diagram $\mathcal{D}_R$ as shown in Figures~\ref{DPCFig11i-3}b and \ref{DPCFig11i-3}c respectively.

\begin{figure}[tbp]
\centering
\includegraphics[width = 0.95\textwidth]{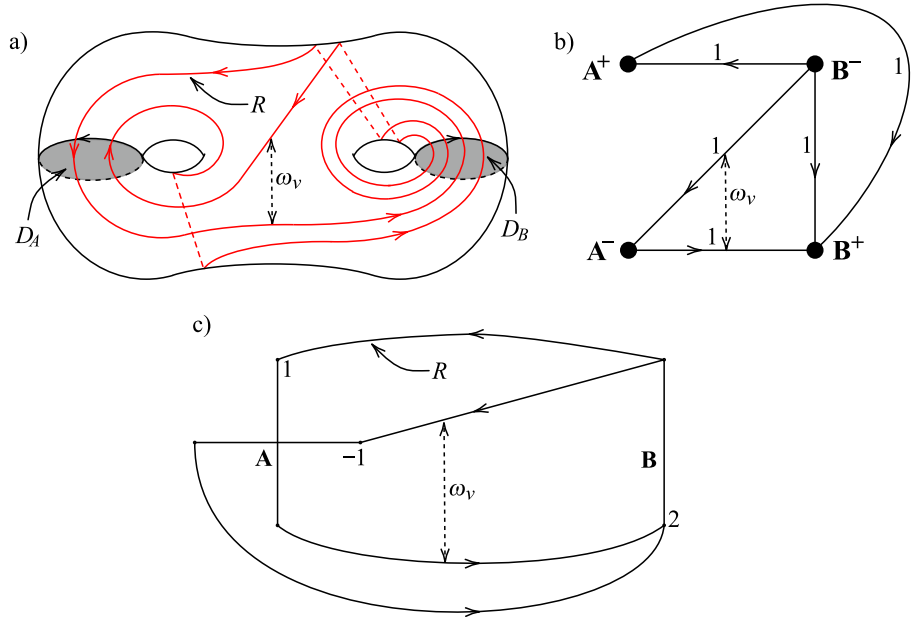}
\caption {A vertical wave $\omega_v$ in a nonpositive Heegaard diagram $\mathbb{D}_R$ and in the R-R diagram $\mathcal{D}_R$.}
\label{DPCFig11i-3}
\end{figure}

If $\mathcal{D}_R$ is positive, a distinguished wave $\omega$ is a  horizontal wave $\omega_h$ which may require some care, because the locations of the two points $\omega_h \cap R$ on $R$ may not be immediately apparent in the R-R diagram $\mathcal{D}_R$.

The next proposition and Figures \ref{DPCFig10d} -- \ref{DPCFig9h}, which follow, address this question by indicating how to locate the endpoints of horizontal waves in R-R diagrams.

\begin{figure}[tbp]
\centering
\includegraphics[width = 0.85\textwidth]{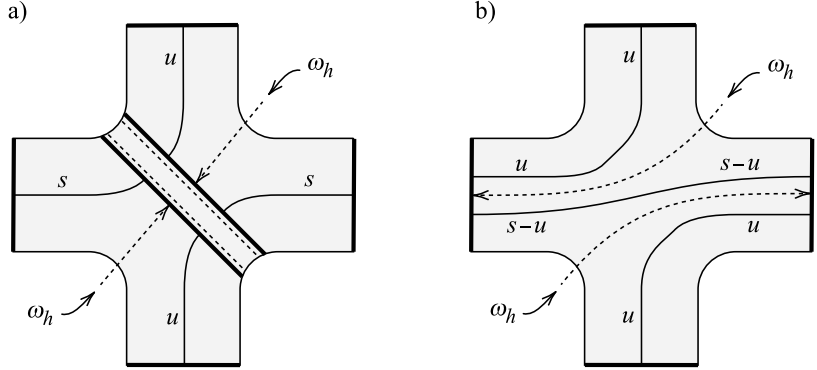}
\caption{Two figures showing a once-punctured torus $T$ in a genus two Heegaard surface in which $T$ corresponds to a handle of an R-R diagram whose underlying Heegaard diagram has a horizontal wave $\omega_h$. Figure \ref{DPCFig10d}a illustrates a situation in which there are three nonempty bands of connections in $T$ meeting the boundary of the cutting disk $\Delta$ lying in $T$, and these bands meet $\partial \Delta$ $s$, $u$ and $s + u$ times with $s,u > 0$. Figure \ref{DPCFig10d}b illustrates a situation in which there are only two nonempty bands of connections in $T$ and $s > u > 0$.
Note that, in each case, $\omega_h$ has an endpoint on a connection in $T$ which borders the band of connections in $T$ that meet $\partial \Delta$ maximally.}
\label{DPCFig10d}
\end{figure}

\begin{prop}
[\textbf{Horizontal Waves Terminate at Connections Bordering Band of Connections With Maximal Labels}]\label{Locate horizontal waves}
Suppose $\mathcal{D}_R$ is a genus two R-R diagram of a simple closed curve $R$ such that the Heegaard diagram $\mathbb{D}_R$ underlying $\mathcal{D}_R$ is positive,  connected, and has no cut-vertex.

Suppose $T$ is a once-punctured torus handle of $\mathcal{D}_R$, and $\omega_h$ is a horizontal wave which meets $R$ in $T$. Let $\Delta$ be the cutting disk of the underlying handlebody whose boundary $\partial \Delta$ lies in $T$.
Then, in each case, $\omega_h$ has an endpoint on a connection in $T$ which borders the band of connections with maximal label, i.e., which meet $\partial \Delta$ maximally.
\end{prop}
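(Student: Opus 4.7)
The plan is to convert the characterization of the horizontal wave provided by Theorem~\ref{waves provide meridians} into the R-R diagram language and then, through a combinatorial analysis of the faces of $\mathbb{D}_R$ inside $T$, identify which connections can host the $T$-side endpoint of $\omega_h$.

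First I set up the band structure. Because $T$ is a once-punctured torus, it carries at most three pairwise nonparallel bands of connections, with labels $\{s, u, s+u\}$ in the three-band case (the maximum being $s+u$) or $\{s, u\}$ with $s > u > 0$ in the two-band case. The cyclic order of band endpoints around the belt curve $\partial T$ is fixed by the homology classes in $H_1(T)$: it is $(s+u, s, u, -(s+u), -s, -u)$ or $(s, u, -s, -u)$, respectively. In both cases the bands cyclically adjacent to the maximum-label band on $\partial T$ are exactly the other bands present.

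Next I translate into the Heegaard diagram. Cutting $T$ open along $\partial \Delta$ produces a pair of pants with boundary components $\Delta^+, \Delta^-, \partial T$. A connection of label $k$ is cut into $k+1$ sub-arcs: two \emph{boundary} sub-arcs running from $\partial T$ to $\Delta^\pm$, and $k-1$ \emph{interior} sub-arcs running between $\Delta^+$ and $\Delta^-$. By positivity these interior sub-arcs are precisely the $A^+$-to-$A^-$ edges of $\mathbb{D}_R$ contributed by $T$, so by Theorem~\ref{waves provide meridians}(2) the $T$-endpoint of $\omega_h$ lies on one of them. Since $\omega_h$ is a wave, it lies in a single disk face of $\mathbb{D}_R$, and because its other endpoint must reach the complementary handle, that face must contain an arc of $\partial T$.

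The combinatorial crux is then identifying which interior sub-arcs of the pair of pants bound faces that touch $\partial T$. Using the cyclic order above, the interior sub-arcs of the maximum-label band are screened off from $\partial T$ on both sides by the interior sub-arcs of the bordering bands, which lie cyclically between the maximum band's endpoints and $\partial T$. In contrast, each interior sub-arc of a bordering band is flanked on one side by a boundary sub-arc heading to $\partial T$, so it bounds a face that reaches $\partial T$. Consequently the $T$-endpoint of $\omega_h$ must lie on an interior sub-arc of a bordering connection, which is the conclusion of the proposition. The main obstacle is carrying out this face enumeration uniformly across all subcases---three-band versus two-band, high multiplicities on each band, and degenerate configurations such as a bordering band with small label---so that the isotopy classification of arcs in the pair of pants, together with the cyclic order on $\partial T$, rigorously pins down the face adjacencies and rules out the maximum band as the location of $\omega_h$'s $T$-endpoint.
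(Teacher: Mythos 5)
Your setup is sound and is essentially the content the paper leaves to Figure~\ref{DPCFig10d}: by Theorem~\ref{waves provide meridians}(2) the $T$-endpoint of $\omega_h$ lies on an edge of $\mathbb{D}_R$ joining $A^+$ to $A^-$, these edges are exactly the $\Delta^+\Delta^-$ sub-arcs into which $\partial\Delta$ cuts the connections (a label-$k$ connection contributing $k-1$ of them), and the face carrying $\omega_h$ must reach $\partial T$. The problem is the final face-enumeration step, which lands on the wrong answer. You conclude that the endpoint sits on a connection of a band \emph{adjacent} to the maximal band, the maximal band being ``screened off'' from $\partial T$. The proposition, as the paper uses it, asserts the opposite: the endpoint lies on an outermost connection \emph{of} the maximal band (``borders the band with maximal label'' means ``lies on the border of that band''). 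This is visible in the paper's own examples: for $A^3B^2A^2B^2$ the text states that $\omega_h$ terminates at the connection with maximal label $3$ in the $A$-handle, and in the discussion of Figure~\ref{DPCFig9h} it is the maximal syllable $A^q$ of $R=A^rB^uA^qB^tA^qB^u$ that is split by the wave surgery (compare the listed words $M_1$, $M_2$), with the described isotopy ``detouring'' the $q$-band precisely in order to push that attachment point into the annulus.

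The concrete failure is the screening claim. If a bordering band has label $1$ (e.g.\ two bands with labels $2>1$, or three bands with labels $1,1,2$), it contributes no $\Delta^+\Delta^-$ sub-arcs whatsoever, so it can neither host the endpoint nor screen anything; you flag this as a ``degenerate configuration'' to be handled, but it is not degenerate---it already refutes the mechanism. More structurally: in the pair of pants obtained by cutting $T$ along $\partial\Delta$, \emph{all} the $\Delta^+\Delta^-$ sub-arcs, from every band, are mutually parallel and form a single stack, so only the two extreme members of that stack can abut a face meeting $\partial T$; deciding which connection owns those extreme positions requires tracking the cyclic order of the intersection points along $\partial\Delta$ (equivalently, the slopes of the bands), which your argument never does, and when one does it the extreme positions are found to belong to an outermost connection of the maximal band. (Incidentally, the cyclic order you posit, $(s+u,s,u,-(s+u),-s,-u)$, disagrees with the paper's $(s,t,u,-s,-t,-u)$ with $t=s+u$, in which the maximal band separates the other two around the belt curve.) The paper's own proof is only a pointer to Figure~\ref{DPCFig10d}, so a written-out argument along your lines would be a genuine improvement, but the face analysis must be redone so that it yields the correct location.
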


\begin{proof}
See Figures \ref{DPCFig10d}a and \ref{DPCFig10d}b, which illustrate the situation.
\end{proof}

\begin{figure}[tbp]
\includegraphics[width = 1\textwidth]{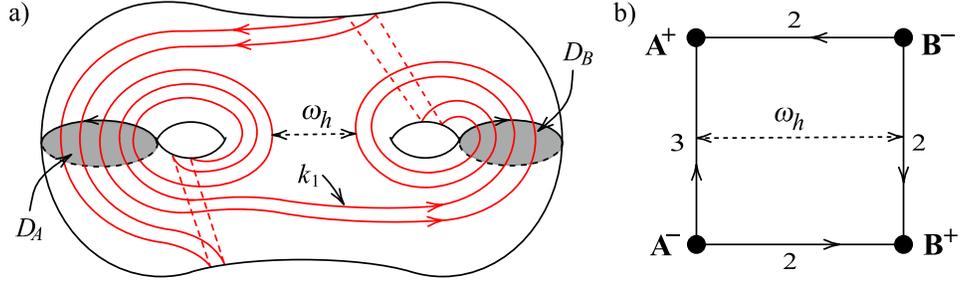}\caption{The horizontal wave $\omega_h$ based at $k_1$ in a genus two surface and in a Heegaard diagram}\label{DPCFig11h}
\end{figure}

For example, consider a simple closed curve $k_1$ in the boundary of a genus two
handlebody $H$ in Figure~\ref{DPCFig11h}a, which is described in Section~\ref{R-R diagrams}. Then its Heegaard diagram $\mathbb{D}_{k_1}$ shown in Figure~\ref{DPCFig11h}b is positive, connected, and has no cut-vertex and thus
it has a horizontal wave $\omega_h$ as in Figure~\ref{DPCFig11h}b. In $\partial H$, $\omega_h$ appears as in Figure~\ref{DPCFig11h}a.
Then we can observe that as Proposition~\ref{Locate horizontal waves} indicates, in $\partial H$ $\omega_h$ terminates at the connections of $k_1$ with maximal intersection number $3$ in the $A$-handle and $2$ in the $B$-handle.

Figures \ref{DPCFig9a}, \ref{DPCFig9b}, \ref{DPCFig9c} and \ref{DPCFig9d} illustrate how an isotopy, which detours the band of connections with maximal labels on a handle, can be used to move an endpoint of a horizontal wave into the annulus between the handles of an R-R diagram.

\begin{figure}[tbp]
\centering
\includegraphics[width = 0.8\textwidth]{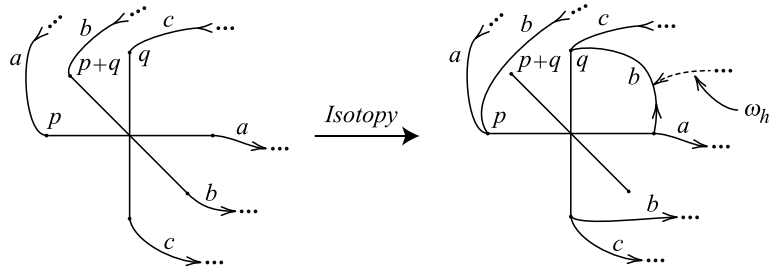}
\caption{Locating the attachment point of a horizontal wave $\omega_h$, meeting an oriented curve from the right, on a handle with three nonempty bands. Here $a, b, c, > 0$, $p, q > 0$ and $\gcd(p,q) = 1$.}
\label{DPCFig9a}
\end{figure}

\begin{figure}[tbp]
\centering
\includegraphics[width = 0.8\textwidth]{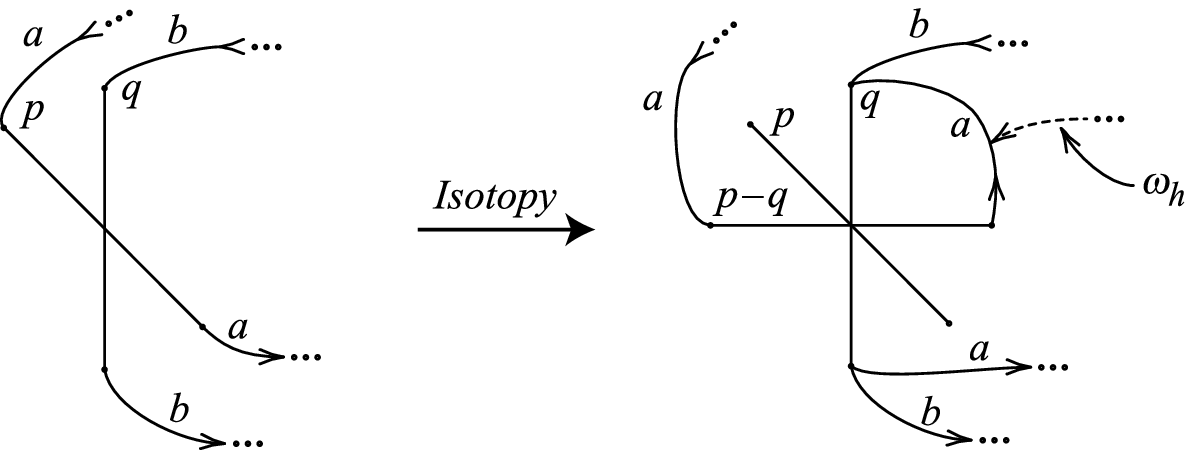}
\caption{Locating the attachment point of a horizontal wave $\omega_h$, meeting an oriented curve from the right, on a handle with two nonempty bands when $p > q > 0$, $a, b > 0$, and $\gcd(p,q) = 1$. (Note the difference between this figure and Figure \ref{DPCFig9c}, which shows the situation when $q > p > 0$.)}
\label{DPCFig9b}
\end{figure}

\begin{figure}[tbp]
\centering
\includegraphics[width = 0.8\textwidth]{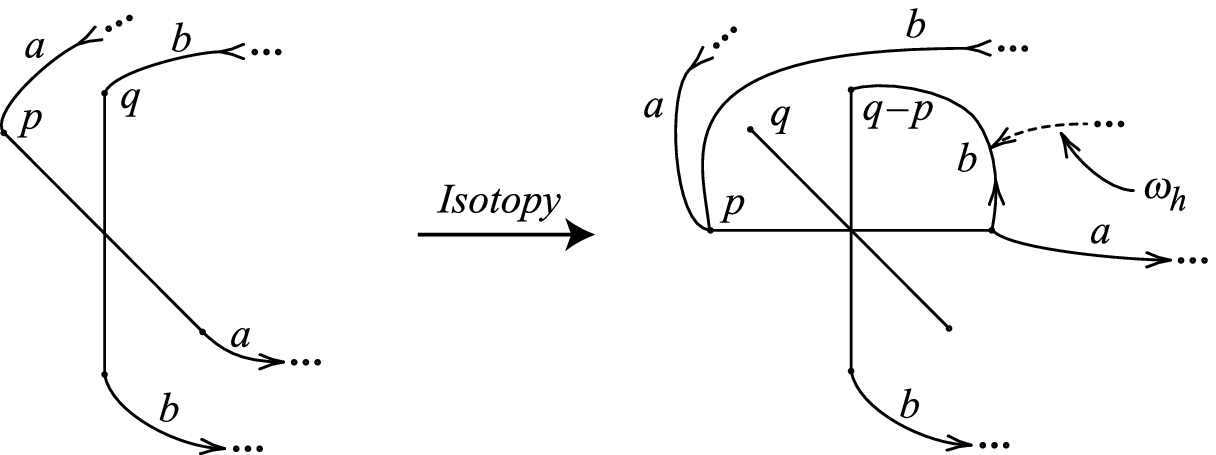}
\caption{Locating the attachment point of a horizontal wave $\omega_h$, meeting an oriented curve from the right, on a handle with two nonempty bands when $q > p > 0$, $a, b > 0$, and $\gcd(p,q) = 1$. (Note the difference between this figure and Figure \ref{DPCFig9b}, which shows the situation when $p > q > 0$.)}
\label{DPCFig9c}
\end{figure}

\begin{figure}[tbp]
\centering
\includegraphics[width = 0.7\textwidth]{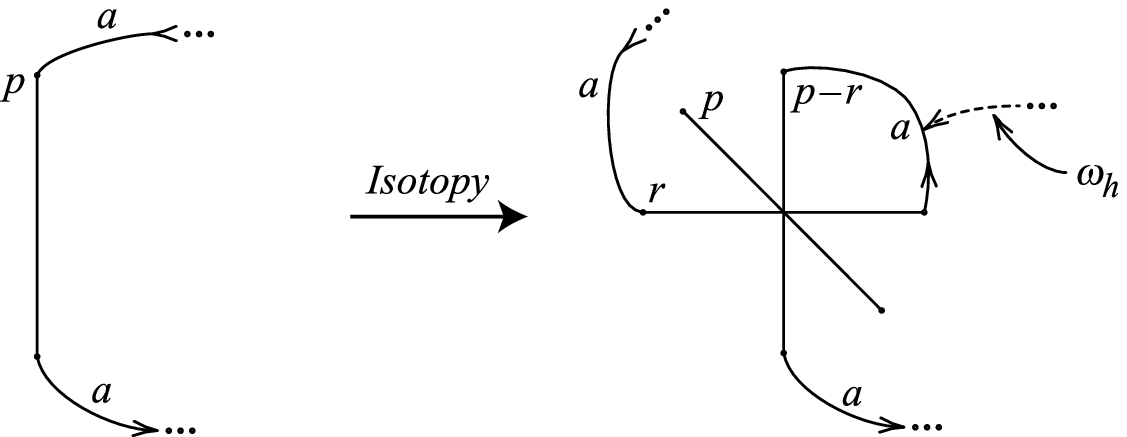}
\caption{Locating the attachment point of a horizontal wave $\omega_h$, meeting an oriented curve from the right, on a handle with only one nonempty band. Here $a > 0$, $p > r > 0$ and $\gcd(p,r) = 1$.}
\label{DPCFig9d}
\end{figure}

\begin{figure}[tbp]
\centering
\includegraphics[width = 0.55\textwidth]{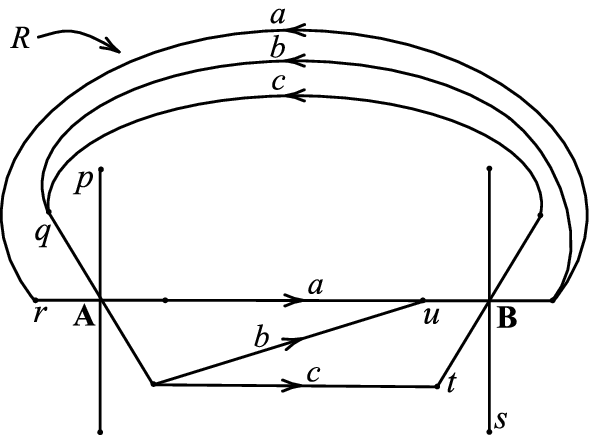}
\caption{An R-R diagram of a nonseparating curve $R$.}
\label{DPCFig9g}
\end{figure}

In order to explain more explicitly the difference between the $p > q$ case in Figure \ref{DPCFig9b} and the $p < q$ case in Figure  \ref{DPCFig9c}, we consider a practical  example.
Suppose $R$ has an R-R diagram $\mathcal{D}_R$ of the form in Figure~\ref{DPCFig9g}, where $a, b, c, q, r, u, t>0$, gcd$(q,r)=$gcd$(u,t)=1$, and $p=q-r, s=t-u$. One is interested in determining the range of values of the parameters $q$, $r$, $u$, and $t$ of $\mathcal{D}_R$ such that the manifold $H[R]$, obtained by adding a 2-handle to a genus two handlebody $H$ along $R$, embeds in $S^3$. The Heegaard diagram $\mathbb{D}_R$ underlying $\mathcal{D}_R$ will be positive, connected and have no cut-vertex if $q + r > 2$ and $u + t > 2$.
If $H[R]$ embeds in $S^3$, then surgery on $R$ along a horizontal wave $\omega_h$ cuts $R$ into two disjoint simple closed curves $M_1$ and $M_2$, such that each of $M_1$, $M_2$ represents the meridian of $H[R]$. However, since $A$ appears in $R$ with only two distinct exponents, there are two subcases to consider: $q>r$ and $q<r$, and because $B$ also appears in $R$ with only two distinct exponents, there are two more subcases to consider: $u>t$ and $u<t$, for a total of four subcases to locate the horizontal wave $\omega_h$. Figure~\ref{DPCFig9h} illustrates how to locate the horizontal wave $\omega_h$ in these four subcases:
a) $q>r$ and $u>t$, b) $q>r$ and $u<t$, c) $q<r$ and $u>t$, d) $q<r$ and $u<t$.

For example, in Figure~\ref{DPCFig9h}a, since $q>r$ and $u>t$, $q$ and $u$ are the maximal labels in the $A$- and $B$-handles respectively. Since $\omega_h$ has an endpoint on a connection in one handle which borders the band of connections with maximal label, we isotope the outermost arc in the set of the $b$ parallel arcs entering the band of the $q$-connections in the $A$-handle and also isotope the outermost edge of the band of width $b$ entering the $u$-connection in the $B$-handle as shown in Figure~\ref{DPCFig9h}a.

\begin{figure}[tbp]
\centering
\includegraphics[width = 1\textwidth]{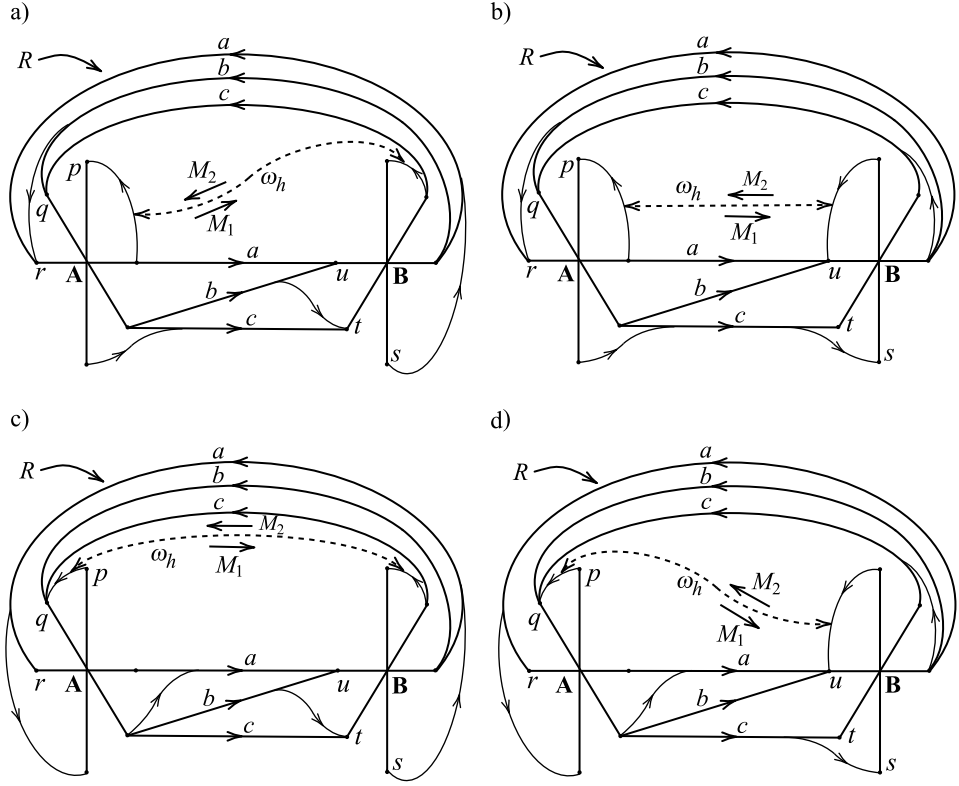}
\caption{The four subcases to locate the horizontal wave $\omega_h$: a) $q>r$ and $u>t$, b) $q>r$ and $u<t$, c) $q<r$ and $u>t$, d) $q<r$ and $u<t$.}
\label{DPCFig9h}
\end{figure}

If $a=b=c=1$ in $\mathcal{D}_R$ of Figure~\ref{DPCFig9g}, then $R = A^rB^uA^qB^tA^qB^u$ and horizontal waves leads to the following four  $(M_1, M_2)$ pairs.
\begin{enumerate}
\item If $q > r$ and $u > t$,  $(M_1, M_2)$ = $(A^rB^{-s}A^rB^{u}, \,B^tA^pB^tA^{q})$.
\item If $q > r$ and $u < t$,  $(M_1, M_2)$ = $(A^rB^{u}A^qB^{u}A^rB^u, \,B^sA^p)$.
\item If $q < r$ and $u > t$,  $(M_1, M_2)$ = $(A^{-p}B^{-s}, \,B^tA^qB^uA^qB^tA^q)$.
\item If $q < r$ and $u < t$,  $(M_1, M_2)$ = $(A^{-p}B^uA^qB^u, \,B^{s}A^qB^uA^q)$.
\end{enumerate}

\section{Culling lists of knot exterior candidates}\label{Culling lists of knot exterior candidates}

As usual, we suppose $R$ is a nonseparating simple close curve on the boundary of a genus two handlebody $H$ with a complete set of cutting disks  $\{D_A, D_B\}$ such
that the Heegaard diagram of $R$ with respect to $\{\partial D_A, \partial D_B\}$ is connected and has no cut-vertex. If $H[R]$ embeds in $S^3$, then Theorem~\ref{waves provide meridians} indicates that there are two representatives $M_1$ and $M_2$ of the meridian of $H[R]$, which are obtained from $R$ by surgery along a distinguished wave based at $R$. One of the results of \cite{B20} shows that $M_1$ and $M_2$ are ``shortest" meridian representatives of $H[R]$. In other words, if a simple closed curve $m$ on $\partial H$ disjoint from $R$ represents the meridian
of $H[R]$, then the complexity of the Heegaard diagram of $m$ with respect to $\{\partial D_A, \partial D_B\}$ is greater than or equal to the complexity of the Heegaard diagram
of either $M_1$ and $M_2$ with respect to $\{\partial D_A, \partial D_B\}$.

Now we suppose that $2|M_1|\leq |R|$, which means that $M_1$ is the shortest meridian
representative of $H[R]$. Then some restriction can be put as an obstruction of being the shortest meridian of $H[R]$ under some circumstances. This section is devoted to make
such restriction.

We start with basic properties of nonseparating simple closed curves in a closed orientable surface.

\begin{lem}
\label{nonparallel implies nonseparating}
Let $\alpha$ and $\beta$ be two disjoint nonseparating simple closed curves in a closed orientable surface $\Sigma$ of genus two. Then either $\alpha$ and $\beta$ are isotopic in $\Sigma$, or the union of $\alpha$ and $\beta$ does not separate $\Sigma$.
\end{lem}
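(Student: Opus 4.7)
The plan is to argue by contradiction: assume $\alpha \cup \beta$ separates $\Sigma$ and then show $\alpha$ and $\beta$ must cobound an annulus, forcing them to be isotopic.

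First I would cut $\Sigma$ open along $\alpha \cup \beta$ to obtain a (possibly disconnected) surface with boundary. Since $\alpha$ and $\beta$ are disjoint simple closed curves, cutting produces four boundary circles in total: two parallel copies of $\alpha$ and two parallel copies of $\beta$. Assuming $\alpha \cup \beta$ separates, the resulting surface has at least two components, and I would show there are exactly two, call them $\Sigma_1$ and $\Sigma_2$, each containing precisely one copy of $\alpha$ and one copy of $\beta$ in its boundary. The key point here is that the distribution of boundary copies among $\Sigma_1$ and $\Sigma_2$ is constrained by the individual nonseparating hypothesis: if, say, both copies of $\alpha$ occurred on $\partial \Sigma_1$, then regluing only the two copies of $\alpha$ would yield $\Sigma \smallsetminus \beta$ as a disjoint union of the resulting surface with $\Sigma_2$, contradicting the fact that $\beta$ is nonseparating (so $\Sigma \smallsetminus \beta$ is connected). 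A symmetric argument with $\alpha$ handles the other asymmetric distribution, so each $\Sigma_i$ has exactly one $\alpha$-copy and one $\beta$-copy on its boundary.

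Next I would compute Euler characteristics. Since $\chi(\Sigma) = -2$ and cutting along curves does not change Euler characteristic, $\chi(\Sigma_1) + \chi(\Sigma_2) = -2$. Each $\Sigma_i$ has exactly two boundary components, so if $g_i$ denotes the genus of $\Sigma_i$, then $\chi(\Sigma_i) = -2g_i$, giving $g_1 + g_2 = 1$. Without loss of generality $g_1 = 0$ and $g_2 = 1$, so $\Sigma_1$ is a planar surface of genus zero with exactly two boundary components, namely an annulus with boundary components $\alpha$ and $\beta$. Since the two boundary components of an annulus are isotopic through its core, $\alpha$ and $\beta$ are isotopic in $\Sigma$.

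The only step requiring genuine care is the second one, namely ruling out the ``lopsided'' distributions of the four boundary circles among the components of $\Sigma \smallsetminus (\alpha \cup \beta)$; everything else is a routine Euler characteristic computation. Once that combinatorial constraint is in hand, the annulus conclusion and the isotopy follow immediately.
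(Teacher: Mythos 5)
Your argument is correct and is essentially the proof in the paper: both show that each complementary piece of $\alpha\cup\beta$ must carry exactly one copy of $\alpha$ and one copy of $\beta$ on its boundary (using that each curve is individually nonseparating), and then use an Euler characteristic count to force one piece to be an annulus cobounded by $\alpha$ and $\beta$. The only difference is organizational: the paper cuts sequentially, first along $\alpha$ to get a connected twice-punctured torus $F$ and then along $\beta$, which makes the ``exactly two components'' step automatic, whereas your simultaneous cut requires the extra (but correctly handled) regluing argument to pin down the number of components and the distribution of boundary circles.
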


\begin{proof}
Suppose the union of $\alpha$ and $\beta$ separates $\Sigma$. We will show this implies $\alpha$ and $\beta$ are isotopic in $\Sigma $.

Let $N_\alpha$ be a regular neighborhood of $\alpha$ in $\Sigma $ and let $F = \Sigma - \text{int}(N_\alpha)$. Then $F$ is a twice-punctured torus and $\beta$ is a simple closed curve in $F$. Since we have assumed the union of $\alpha$ and $\beta$ separates $\Sigma$, $\beta $ must separate $F$. Let $N_\beta $ be a regular neighborhood of $\beta $ in $F$. Then $F - \text{int}(N_\beta)$ will have two components $C_1$ and $C_2$, say.  Since $\beta$ does not separate $\Sigma$, each of $C_1$ and $C_2$ must have one copy of $\alpha$ and one copy of $\beta$ as its boundary components. Then, turning to Euler characteristics, we have: $\chi(F) = -2$, while each of $\chi(C_1)$, $\chi(C_2)$ is even and $\leq 0$. But $\chi(F) = \chi(C_1) + \chi(C_2)$. Thus either $\chi(C_1) = 0$ or $\chi(C_2) = 0$. So one of $C_1$, $C_2$ is an annulus, and $\alpha$ and $\beta$ are isotopic in $\Sigma$.
\end{proof}

\begin{lem}[\textbf{Unique Disjoint Cutting Disk}]\label{unique disjoint cutting disk}
Let $H$ be a genus two handlebody and let $\alpha$ be a simple closed curve in $ \partial H$ which does not bound a disk in $H$. Let $N_\alpha$ be a regular neighborhood of $\alpha$ in $\partial H$. Then there is at most one nonseparating simple closed curve in $ \partial H - \text{int}(N_\alpha)$ which bounds a disk in $H$.
\end{lem}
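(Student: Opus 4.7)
The plan is to suppose we have two such curves $d_1, d_2 \subset \partial H - \text{int}(N_\alpha)$, bounding disks $D_1, D_2$ in $H$, and then to apply Lemma~\ref{nonparallel implies nonseparating} to the pair $(d_1, d_2)$. That lemma leaves two cases: either $d_1$ and $d_2$ are isotopic in $\partial H$, or $d_1 \cup d_2$ does not separate $\partial H$. I will rule out the non-separating case by deriving a disk bounded by $\alpha$ in $H$, contradicting the hypothesis, and in the isotopic case I will verify that the annulus realizing the isotopy can be chosen disjoint from $N_\alpha$.

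Consider the non-separating case first. Here $\{D_1, D_2\}$ is a complete cutting system of $H$, so cutting $H$ open along $D_1 \cup D_2$ yields a 3-ball $W$ whose boundary sphere $S$ carries four disks $D_1^+, D_1^-, D_2^+, D_2^-$. Since $\alpha$ is disjoint from $d_1 \cup d_2$, it lies on $S$, and therefore bounds a disk $E$ in the 2-sphere $S$. Pushing the interior of $E$ slightly into $\text{int}(W)$ produces a properly embedded disk in $W \subset H$ with boundary $\alpha$, contradicting the hypothesis that $\alpha$ does not bound a disk in $H$. Equivalently, because $\alpha$ is disjoint from a complete cutting system it represents the trivial element of $\pi_1(H) = F(A,B)$, so Dehn's Lemma produces the disk.

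In the remaining case, $d_1$ and $d_2$ cobound an annulus $A$ in $\partial H$. Because $\alpha$ is disjoint from $\partial A = d_1 \cup d_2$, the curve $\alpha$ is contained in one of the two components $\text{int}(A)$ and $\partial H - A$ of $\partial H - \partial A$. If $\alpha \subset \text{int}(A)$, then as a simple closed curve on the annulus it is either inessential (hence bounds a disk in $A \subset \partial H$ that pushes into a disk in $H$) or isotopic to the core of $A$ (hence isotopic to $d_1$ in $\partial H$, and therefore bounds a disk isotopic to $D_1$); both outcomes contradict the hypothesis on $\alpha$. Hence $\alpha \subset \partial H - A$, and taking $N_\alpha$ to be a sufficiently small regular neighborhood of $\alpha$ gives $N_\alpha \subset \partial H - A$, so that $A$ provides an isotopy from $d_1$ to $d_2$ inside $\partial H - \text{int}(N_\alpha)$.

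The step requiring the most care is the push-into-$W$ argument in the non-separating case: one must verify that the four disks $D_1^\pm, D_2^\pm$ on $\partial W$ do not obstruct isotoping $E$ into $\text{int}(W)$. They do not, because $W$ is a 3-ball and any disk on its boundary 2-sphere can be pushed into the interior regardless of what other sub-disks are drawn on the sphere; this is what makes the argument go through cleanly, since the alternative Dehn's-Lemma route would obscure the direct geometric content.
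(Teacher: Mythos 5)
There is a genuine gap: your entire argument presupposes that the two disk-bounding curves $d_1$ and $d_2$ are disjoint from each other, but the lemma makes no such assumption. Both curves live in $\partial H - \text{int}(N_\alpha)$, i.e.\ each is disjoint from $\alpha$, but nothing forces them to be disjoint from one another, and Lemma~\ref{nonparallel implies nonseparating} applies only to a pair of \emph{disjoint} curves. The case you do treat (disjoint $d_1, d_2$: either they cobound an annulus, or they form a complete cutting system so that $\alpha$ becomes null-homotopic and hence bounds a disk) is correct and coincides with the easy step of the paper's proof. But the paper then must, and does, confront the remaining possibility that $\beta$ and $\gamma$ intersect essentially: it takes properly embedded disks $D_\beta$, $D_\gamma$ meeting minimally in arcs, picks an outermost subdisk $F_\gamma$ of $D_\gamma$, cuts $\partial H$ open along $\alpha \cup \beta$ (using Lemma~\ref{nonparallel implies nonseparating} to see this is a Heegaard diagram), invokes Lemma~\ref{3 types of genus two diagrams} to pin down the form of the resulting diagram, and uses $F_\gamma$ to surger $D_\beta$ into a disk whose boundary is isotopic to $\alpha$ --- again contradicting the hypothesis. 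That intersecting case is the substantive content of the lemma and is entirely absent from your proposal; without it the statement you have actually proved is only that at most one \emph{isotopy class with a disjoint representative pair} can occur, which is strictly weaker and insufficient for the application in the Culling Lemma.

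A secondary remark: your care in the annulus case about pushing $N_\alpha$ off the annulus is harmless but unnecessary, since the conclusion is a count of curves up to isotopy in $\partial H$ and the paper does not require the isotopy to avoid $N_\alpha$. The "push $E$ into the ball $W$" step is fine as stated (equivalently, $\alpha$ is trivial in $\pi_1(H)$ and the Loop Theorem applies), and the paper's first reduction --- disposing of the case where $\alpha$ itself separates $\partial H$ --- is subsumed by your cutting-system argument, so that omission is not a defect.
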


\begin{proof}
First, suppose that $\alpha $ separates $ \partial H $. Then each component of $ \partial H -\text{int}(N_\alpha)$ is a punctured torus and if a simple closed curve in either of these components bounds a disk in $H$, then so does $ \alpha $. So, in this case, there are no nonseparating simple closed curves in $ \partial H - \text{int}(N_\alpha)$ which bound disks in $H$, and the result holds.

Next suppose $ \alpha $ is nonseparating in $ \partial H $, and there exist two nonseparating nonisotopic simple closed curves $\beta $ and $\gamma $, say, in $ \partial H - \text{int}(N_\alpha)$, each of which bounds a disk in $H$. We will show this implies $\alpha $ bounds a disk in $H$, contrary to hypothesis.

Note Lemma~\ref{nonparallel implies nonseparating} implies that if $\beta$ and $\gamma$ are disjoint in $\partial H $, and not isotopic in $\partial H $, then $\beta $ and $\gamma $ bound a set of cutting disks for $H $. But this implies $\alpha $ also bounds a disk in $H $, contrary to hypothesis.

So $\beta $ and $\gamma $ must intersect essentially. Let $D_\beta$ and $D_\gamma$ be disks properly embedded in $H$ which are bounded by $\beta $ and $\gamma $ respectively.
We may assume that $D_\beta$ and $D_\gamma$ intersect minimally and only in arcs. Let $F_\gamma$ be an outermost subdisk of $D_\gamma $ cut off of $D_\gamma $ by the arcs in $D_\beta \cap D_\gamma $. Since, by Lemma~\ref{nonparallel implies nonseparating}, $\alpha$ and $\beta $ do not separate $\partial H$, we can form a ``Heegaard'' diagram $\mathbb{D}_\gamma$ of $\gamma $ by cutting $\partial H $ open along $\alpha $ and $\beta $. By Lemma~\ref{3 types of genus two diagrams}, $\mathbb{D}_\gamma$ must have the form shown in Figure~\ref{DPCFig1s}. Then an examination of $\mathbb{D}_\gamma $ shows that $F_\gamma $ can be used to perform surgery on $D_\beta $ so as to obtain a simple closed curve $\delta $ in $\partial H $ such that $\delta $ bounds a disk in $H$ and $\delta $ is isotopic to $\alpha $. This contradicts the assumption that $\alpha $ does not bound a disk in $H$ and completes the proof.
\end{proof}

\begin{figure}[tbp]
\centering
\includegraphics[width = 0.35\textwidth]{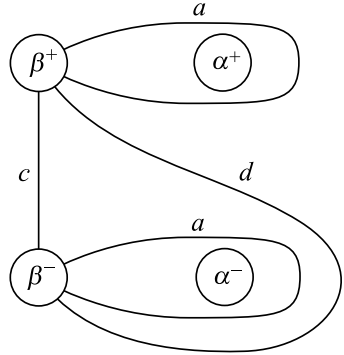}
\caption{The Heegaard diagram $\mathbb{D}_\gamma$. Weights $a > 0$, $c,d \geq 0$}
\label{DPCFig1s}
\end{figure}

As examples of Lemma~\ref{unique disjoint cutting disk}, let $\alpha$ be a primitive curve in a genus two handlebody $H$, i.e., $H[\alpha]$ is a solid torus.
Then there is a cutting disk which intersects $\alpha$ transversely at a point and thus there is another cutting disk $D$ which is disjoint from $\alpha$. By Lemma~\ref{unique disjoint cutting disk}, this cutting disk $D$ is unique. On the other hand, if $\alpha$ is a Seifert curve in $H$, i.e., $H[\alpha]$ is a Seifert-fibered space and not a solid torus,
then $\alpha$ must intersect every cutting disk of $H$ since $H[\alpha]$ is $\partial$-irreducible. Thus there is no cutting disk in $H$ which is disjoint from $\alpha$.

Now we describe the main result in this section which will be used in Section~\ref{R-R diagrams of a tunnel-number-one knot exteriors with only one band of connections in a handle}. Theorem~\ref{waves provide meridians}
guarantees that a meridian $M$ of $H[R]$ can be obtained from $R$ by surgery along a distinguished wave $\omega$, provided that the Heegaard diagram $\mathbb{D}_R$ is connected and has no cut-vertex.
Note that $H[M]$ is also a knot exterior of $S^3$ and thus Theorem~\ref{waves provide meridians} can apply to $M$ as well.
The following lemma, which is called the \textit{Culling Lemma}, provides some restriction on the shortest meridian of $H[R]$.

\begin{lem}[\textbf{The Culling Lemma}]\label{culling lemma}

Let $H$ be a genus two handlebody with a complete set of cutting disks $\{D_A, D_B\}$ and let $\alpha$ and $R$ be two disjoint nonseparating simple closed curves on $\partial H$. Suppose that $H[R]$ embeds in $S^3$ and the Heegaard diagram of $R$ with respect to $\{D_A, D_B\}$ is connected and has no cut-vertex, and has a distinguished wave $\omega$ such that $\alpha$ and $\omega$ intersect essentially in a single point. Let $M_1$ be the representative of the meridian of $H[R]$ with $2|M_1|\leq |R|$, obtained from $R$ by surgery along $w$.

Suppose the Heegaard diagram of $M_1$ with respect to $\{D_A, D_B\}$ is connected and has no cut-vertex, and has a distinguished wave $\omega_1$. Then $\omega_1$ must have more than one essential intersection with $\alpha$. Furthermore if $\omega_1$ has exact two essential intersections with $\alpha$, then they have same signs.
\end{lem}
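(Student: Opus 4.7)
The plan is to argue by contradiction, assuming either $|\omega_1\cap\alpha|_{\text{ess}}\le 1$, or $|\omega_1\cap\alpha|_{\text{ess}}=2$ with opposite signs, and deriving a contradiction. First I would note that $H[M_1]$ is a solid torus with $R$ as a meridian: since $H[R]$ is a knot exterior in $S^3$ and $M_1$ represents the knot meridian, we have $H[R][M_1]\cong B^3$, hence $H[M_1][R]\cong B^3$, forcing $H[M_1]$ to be a solid torus in which $R$ bounds a meridional disk. In particular $H[M_1]$ embeds in $S^3$ and Theorem~\ref{waves provide meridians} applies to $M_1$, producing via wave surgery along $\omega_1$ a meridian $M_2$ of $H[M_1]$ disjoint from $M_1$. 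Because $R$ and $M_2$ both represent the meridian class of this solid torus and both are disjoint from $M_1$, we have $[M_2]=[R]+n\,[M_1]$ in $H_1(\partial H;\mathbb{Z})$ for some integer $n$.

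Next I would extract intersection data from the two wave surgeries. Because $\alpha\cap R=\emptyset$ and $\alpha$ meets $\omega$ transversely in a single point, an analysis of the wave-surgery regular neighborhood of $R\cup\omega$ in $\partial H$ yields a precise count of $|M_1\cap\alpha|$ and a nonzero signed intersection $[M_1]\cdot[\alpha]$. Pairing the relation $[M_2]=[R]+n\,[M_1]$ with $[\alpha]$ and using $[R]\cdot[\alpha]=0$ then gives $[M_2]\cdot[\alpha]=n\cdot([M_1]\cdot[\alpha])$. On the other hand, applying the same analysis to the wave-surgery neighborhood of $M_1\cup\omega_1$ expresses $[M_2]\cdot[\alpha]$ directly in terms of $(\omega_1\cdot\alpha)_{\text{signed}}$ and the signed intersections with $\alpha$ of the arcs of $M_1\setminus\partial\omega_1$. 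Equating these two expressions---and carrying out the analogous computation for the sibling new meridian $M_2'$ coming from the second wave-surgery neighborhood, combined with the identity among boundary components of that neighborhood in $H_1(\partial H)$---produces a small system of integer relations among $n$, the analogous $n'$ for $M_2'$, $(\omega_1\cdot\alpha)_{\text{signed}}$, and $|\omega_1\cap\alpha|_{\text{ess}}$.

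For $|\omega_1\cap\alpha|_{\text{ess}}\le 1$, so $|(\omega_1\cdot\alpha)_{\text{signed}}|\le 1$, this system has no integer solution compatible with the geometric constraint that the intersections of $M_1$ with $\alpha$ lie on specific arcs of $M_1\setminus\partial\omega_1$, ruling out these cases. For $|\omega_1\cap\alpha|_{\text{ess}}=2$ with opposite signs, $(\omega_1\cdot\alpha)_{\text{signed}}=0$ and the arithmetic argument is inconclusive; this is the main obstacle. I would handle it by appealing to Lemma~\ref{unique disjoint cutting disk}: the annulus in $\partial H[M_1]$ cobounding $R$ and $M_2$, together with the opposite-sign cancellation of $\omega_1\cap\alpha$, admits a band-surgery construction producing a second nonseparating simple closed curve in $\partial H$ that is disjoint from $\alpha$ and bounds a disk in $H$, distinct from the unique such curve guaranteed by the primitivity of $M_1$ in $H$. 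This violates Lemma~\ref{unique disjoint cutting disk} and completes the contradiction.
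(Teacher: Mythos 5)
Your opening step is already incorrect: from $H[M_1][R]\cong B^3$ you cannot conclude that $H[M_1]$ is a solid torus. What follows is only that $H[M_1]$ is the exterior of a (tunnel-number-one) knot in $S^3$ for which $R$ is the meridian slope (via Gordon--Luecke); the complement of $H[M_1]$ is the solid torus obtained by cutting $H'$ along the disk bounded by $M_1$, and the core of that solid torus can be any tunnel-number-one knot. Indeed, in the very setting where the paper applies this lemma, $H[M_1]$ is sometimes Seifert-fibered over $D^2$ with two exceptional fibers. The relation $[M_{1,1}]=[R]+n[M_1]$ in $H_1(\partial H)$ can still be salvaged (both curves represent the meridian slope on the torus $\partial H[M_1]$), but the arithmetic it feeds is too weak to prove the lemma. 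For $|\omega_1\cap\alpha|=1$ your pairing with $[\alpha]$ gives $[M_{1,1}]\cdot[\alpha]=\pm n$ with $|M_{1,1}\cap\alpha|=1$, i.e.\ $n=\pm1$ --- not a contradiction. The actual obstruction in this case is not homological: the paper uses Lemma~\ref{unique disjoint cutting disk} to force $R$ to be the bandsum of $M_1$ and $M_{1,1}$ along a subarc of $\alpha$, extracts from $\omega_1$ a wave $\omega'$ based at $R$, shows $\omega'$ must coincide with the distinguished wave $\omega$ (using uniqueness of waves in the nonpositive case, and the horizontal/vertical dichotomy in the positive case), and then contradicts the complexity inequality $|M_{1,1}|<|M_1|\leq|M_2|$. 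Signed intersection numbers with $\alpha$ cannot detect any of this.

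For the opposite-signs case your proposed fix is both vague and misdirected. The "band-surgery construction producing a second curve disjoint from $\alpha$ bounding a disk in $H$" is never specified, and the uniqueness you invoke is misstated: the relevant application of Lemma~\ref{unique disjoint cutting disk} is to curves bounding disks in the \emph{complementary} handlebody $H'$ (where $R$ is the unique such curve disjoint from $\alpha$ because $\alpha$ does not bound a disk in $H'$), not to disks in $H$, and "primitivity of $M_1$ in $H$" is neither a hypothesis nor what drives the uniqueness. The paper's argument here is a concrete complexity count: it draws the Heegaard diagram of $\alpha$, $\omega_1$, and $R$ with respect to the cutting disks of $H'$ bounded by $M_1$ and $M_{1,1}$, superimposes $\partial D_A\cup\partial D_B$, and reads off $|R|=c+d$ while $|M_1|=a+2b+c+d$ with $a>0$, contradicting $|M_1|<|R|$. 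Your cases where the argument would need to do real work are exactly the ones left as assertions, so the proposal does not constitute a proof.
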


\begin{proof}
Since $H[R]$ embeds in $S^3$ and $M_1$ is the representative of the meridian of $H[R]$, $R$ and $M_1$ bound a set of cutting disks of a genus
two handlebody $H'$ in $S^3$. Let $M_2$ be the other representative of the meridian of $H[R]$, obtained by surgery on $R$ along $\omega$. Since $2|M_1|\leq |R|$ and $|M_1|+|M_2|\leq |R|$, $|M_1|\leq |M_2|$, and, since $S^3$ does not contain any nonseparating $S^2$s, $|M_i| > 0$ for $i = 1,2$.

First we prove that $\omega_1$ has more than one essential intersection with $\alpha$.
If $\omega_1$ is disjoint from $\alpha$, one of the two simple closed curves obtained from $M_1$ by surgery on $M_1$ along $\omega_1$ is disjoint from $\alpha$. Let $R'$ be that curve. Then $R'$ is a nonseparating simple closed curve in $\Sigma$, $R'$ bounds a disk in $H'$, and $|R'| < |M_1| < |R|$. But this is impossible since by Lemma~\ref{unique disjoint cutting disk} $R$ is the unique (up to isotopy) simple closed curve in $\Sigma$ which is disjoint from $\alpha$ and bounds a disk in $H'$. So $\omega_1$ must intersect $\alpha$ at least once.

Now we show that $\omega_1 \cap \alpha$ cannot consist of just one point of essential intersection. Suppose, to the contrary, that $\omega_1 \cap \alpha$ consists of a single point of essential intersection. Then surgery on $M_1$ along $\omega_1$ yields two nonseparating simple closed curves $M_{1,1}$ and $M_{1,2}$, each of which bounds a disk in $H'$. One of $M_{1,1}$ and $M_{1,2}$ intersects $\alpha$ once, while the other curve intersects $\alpha$ twice. Let $M_{1,1}$ be the curve which intersects $\alpha$ once. Then $M_1$ and $M_{1,1}$ bound a complete set of cutting disks of $H'$ and the Heegaard diagram of $\alpha$ with respect to $M_1$ and $M_{1,1}$ has the form shown in Figure \ref{CullFig1} up to the orientations of $M_1$ and $M_{1,1}$.

\begin{figure}[tbp]
\centering
\includegraphics[width = 0.4\textwidth]{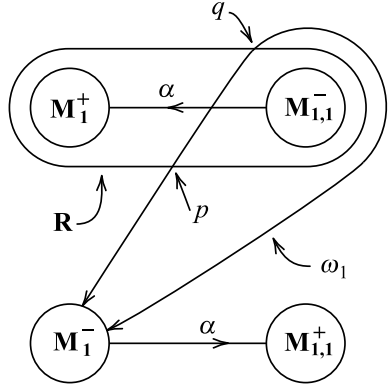}
\caption{The handlebody $H'$ cut open along the set of cutting disks bounded by $M_1$ and $M_{1,1}$. Here $|M_1 \cap \alpha| = 1$, $\omega_1$ is a wave based at $M_1$ intersecting $\alpha$ only once, so $|M_{1,1} \cap \alpha|$ = $1$, and $R$ is the bandsum of $M_1$ and $M_{1,1}$ along a subarc of $\alpha$. (Note the subarc of $\omega_1$ with endpoints at $p$ and $q$ is a wave $\omega'$ based at $R$.)}
\label{CullFig1}
\end{figure}

Since $\alpha$ intersects each of $M_1$ and $M_{1,1}$ only once, uniqueness of $R$ implies $R$ is the curve in Figure \ref{CullFig1} obtained by bandsumming $M_1$ and $M_{1,1}$ together along a subarc of $\alpha$. Now let $\omega'$ be the subarc of $\omega_1$ as shown in Figure \ref{CullFig1} which has its endpoints at the points $p$ and $q$ of $\omega_1 \cap R$. Then $\omega'$ is a wave based at $R$. We claim $\omega'$ is isotopic to $\omega$.

To see this, if the Heegaard diagram of $R$ is nonpositive with respect to $\{D_A, D_B\}$, then since by Lemma 3.6 in \cite{B20} $R$ has a unique wave, $\omega'$ is isotopic to $\omega$. But this implies $M_2$ is isotopic to $M_{1,1}$, which is impossible since $|M_{1,1}| < |M_1| \leq |M_2|$. So $R$ must be positive with respect to $\{D_A, D_B\}$.

Next, if the Heegaard diagram of $R$ with respect to $\{D_A, D_B\}$ is positive, then $R$ has both horizontal and vertical waves, where a vertical wave is an arc one of whose endpoints lies on an edge of the Heegaard diagram $\mathbb{D}_R$ of $R$ connecting $A^+$ to one member of $\{B^+, B^-\}$, while the other endpoint lies on an edge of $\mathbb{D}_R$ connecting $A^-$ to the other member of $\{B^+, B^-\}$ and a horizontal wave is distinguished to be used
to get a meridian of $H[R]$ by Theorem~\ref{waves provide meridians}. Note that a vertical wave and a horizontal wave in the positive Heegaard diagram of $R$ intersect each other once transversely. So $\omega'$ is either horizontal or vertical. Now $M_1$ is obtained from $R$ by surgery along a horizontal wave, namely $\omega$, and, as Figure \ref{CullFig1} shows, surgery on $R$ along $\omega'$ yields curves disjoint from $M_1$. This forces $\omega'$ to be horizontal, since surgery on $R$ along a vertical wave would yield a simple closed curve having a single point of essential intersection with $M_1$.

Thus, again $\omega'$ is isotopic to $\omega$. So, as before, $M_2$ must be isotopic to $M_{1,1}$. Since this is impossible, we conclude $\omega_1$ must have more than one essential intersection with $\alpha$.

\begin{figure}[tbp]
\centering
\includegraphics[width = 0.9\textwidth]{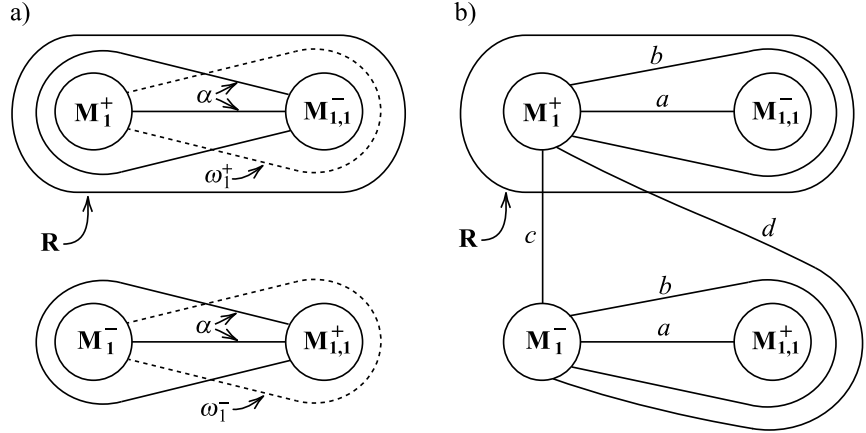}
\caption{The Heegaard diagram of $\alpha, \omega_1$, and $R$ with respect to $\{M_1, M_{1,1}\}$ in Figure~\ref{CullFig2}a and the Heegaard diagram
of $\partial D_A\cup \partial D_B$ with respect to $\{M_1, M_{1,1}\}$ in Figure~\ref{CullFig2}b.}
\label{CullFig2}
\end{figure}

Now we prove the second part of the lemma, i.e, that if $\omega_1$ has exact two essential intersections with $\alpha$, then they have same signs.
Suppose $\omega_1$ has exact two essential intersections with $\alpha$ with opposite intersection signs. As before, let $M_{1,1}$ and $M_{1,2}$ be
two meridian representatives of $H[M_1]$ obtained by surgery on $M_1$ along $\omega_1$ such that $0<|M_{1,1}|<|M_1|$ and $0<|M_{1,2}|<|M_1|$. Since $M_1$
has one essential intersection with $\alpha$ and $\omega_1$ has two essential intersections with $\alpha$, one of $M_{1,1}$ and $M_{1,2}$
intersects $\alpha$ twice and the other member intersects $\alpha$ three times. Suppose $M_{1,1}$ intersects $\alpha$ three times.
Then it is not hard to see that the Heegaard diagram of $\alpha$ and $\omega_1$ with respect to $M_1$ and $M_{1,1}$ has the form of Figure~\ref{CullFig2}a, where $\omega_1$
is either $w^+_1$ or $w^-_1$.
Note that since $R$ is the unique nonseparating simple closed curve disjoint from $\alpha$ which bounds a disk in $H'$, $R$ appears as shown in Figure~\ref{CullFig2}a.

Now consider the diagram obtained by adding $\partial D_A$ and $\partial D_B$ to Figure~\ref{CullFig2}a. Noting that $\omega_1$ is disjoint from $\partial D_A$ and $\partial D_B$,
while $0<|M_{1,1}|<|M_1|$, we see that the Heegaard diagram of $\partial D_A\cup \partial D_B$ has the form of Figure~\ref{CullFig2}b. Since $0<|M_{1,1}|<|M_1|$, $a>0$ in Figure~\ref{CullFig2}b. It follows that $|R|=c+d$ and $|M_1|=a+2b+c+d$. This is impossible since $|M_1|<|R|$. We conclude that if $\omega_1$ has exact two essential intersections with $\alpha$, then they have same signs.
\end{proof}

The Culling Lemma is very useful in the procedure of the classification of hyperbolic P/SF knots in $S^3$. From the first and the second steps for the classification, we classify all possible R-R diagrams of disjoint simple closed curve $\alpha$ and $R$ lying on the boundary of a genus two handlebody such that $\alpha$ is Seifert in $H$ and $H[R]$ is homeomorphic to the exterior of a hyperbolic knot which is represented by $\alpha$. Next step is to find a meridian representative $M$ of $H[R]$, which can be obtained from $R$ by surgery along a distinguished wave. Then the Culling Lemma applies to cull out candidates of $M$. We give an explicit example of the application of the Culling Lemma in the classification of hyperbolic P/SF knots in $S^3$.

\begin{figure}[tbp]
\centering
\includegraphics[width =0.55\textwidth]{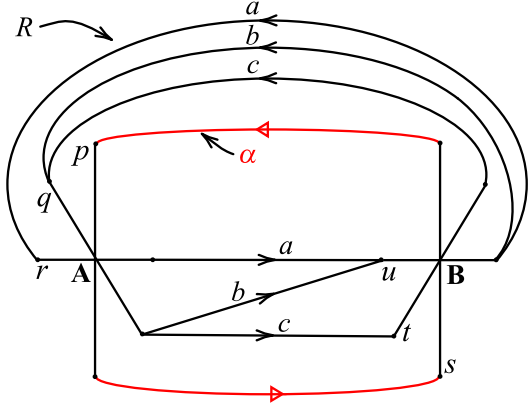}
\caption{R-R diagram of two disjoint nonseparating simple closed curves $\alpha$ and $R$, where $q>r>0$ and $u>t>0$.}
\label{CullFig3}
\end{figure}

\begin{figure}[tbp]
\centering
\includegraphics[width = 1\textwidth]{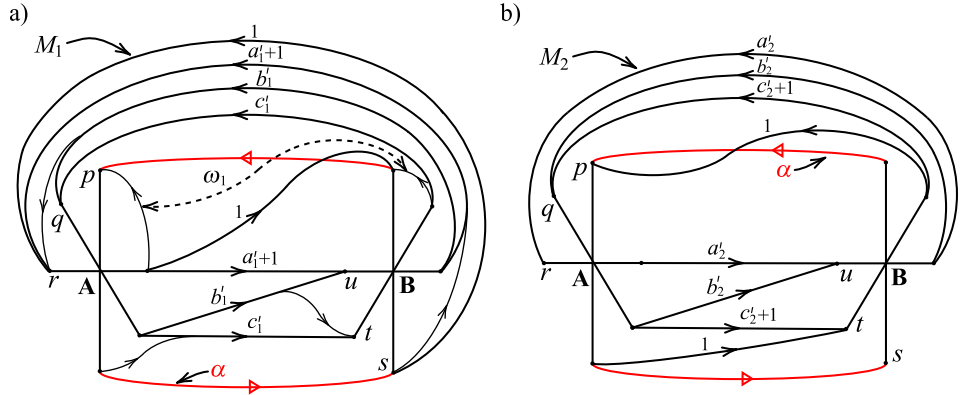}
\caption{Two meridian representatives $M_1$ and $M_2$ of $H[R]$ obtained from $R$ by surgery along the horizontal wave $\omega_h$ and a horizontal wave $\omega_1$ based at $M_1$ which intersects $\alpha$ once in a).}
\label{CullFig4}
\end{figure}

Suppose that two disjoint simple closed curves $\alpha$ and $R$ have an R-R diagram of the form shown in Figure~\ref{CullFig3}, which
is the R-R diagram of $R$ in Figure~\ref{DPCFig9g} with the curve $\alpha$ added. Note that the curve $\alpha$ is a Seifert curve such that $H[\alpha]$ is a Seifert-fibered space over the disk with two exceptional fibers of indexes $|p|$ and $|s|$. Suppose $q>r>0$ and $u>t>0$. Then by surgery on $R$ along the horizontal
wave given in Figure~\ref{DPCFig9h}a, the two meridian representatives $M_1$ and $M_2$ have R-R diagrams as illustrated in Figure~\ref{CullFig4}. Suppose $|M_1|\leq |M_2|$, i.e., $2|M_1|\leq |R|$.

We will show by applying for the Culling Lemma that $b'_1=c'_1=0$ in the R-R diagram of $M_1$. Suppose $b'>0$ in Figure~\ref{CullFig4}a. Then $M_1$ has connections with the maximal labels $q$ and $u$ in the $A$- and $B$-handles respectively and thus the Heegaard diagram of $M_1$ underlying the R-R diagram is connected and has no cut-vertex. Therefore there is a horizontal wave $\omega_1$ based at $M_1$ whose endpoints lie at the $q$-connection in the $A$-handle and the $u$-connection in the $B$-handle. By isotoping the both connections as in Figure~\ref{CullFig4}a, the horizontal wave $\omega_1$ based at $M_1$ appears as in Figure~\ref{CullFig4}a and intersects $\alpha$ transversely at a point. This is a contradiction to the Culling Lemma. So $b'_1$ must be $0$ and also $c'_1=0$ in the R-R diagram of $M_1$, otherwise $M_1$ can't be a simple closed curve.

The similar argument can apply to the other representative $M_2$ to show that $b'_2=c'_2=0$ when $|M_2|\leq |M_1|$.

\section{R-R diagrams of tunnel-number-one knot exteriors disjoint from proper power curves}
\label{R-R diagrams of a tunnel-number-one knot exteriors with only one band of connections in a handle}

In this section, we prove the main result of this paper, which is described in Theorem~\ref{main theorem 1}. In order to prove it, we consider a simple closed curve $R$ in the boundary of a genus two handlebody $H$ which has an R-R diagram with only one band of connections in one handle, say, $B$-handle and at most three bands of connections in the $A$-handle as shown in Figure~\ref{DPCFig21as} with $a,b,c\geq 0$. Note that if $s>1$ in Figure~\ref{DPCFig21as}, there exists a proper power curve $\beta$ as shown there  disjoint from $R$ with $[\beta]=B^s$ in $\pi_1(H)$. In Subsections~\ref{R-R diagrams of $R$ with one connection on one handle and three connections on the other handle} and ~\ref{S: R-R diagrams of torus and cable knots}, we handle R-R diagrams of $R$ of the form in Figure~\ref{DPCFig21as} with $a,b,c>0$ and with at least one of $a,b,c$ being $0$ but not all respectively. Throughout this section, for two disjoint nonseparating simple closed curves $c_1$ and $c_2$ in the boundary of a genus two handlebody $H$, $H[c_1, c_2]$ denotes
the 3-manifold obtained by adding a 2-handle to $H$ along $c_1$ and $c_2$ each and by filling in it sphere boundary with a 3-ball.

\subsection{R-R diagrams of $R$ with one connection in one handle and three connections in the other handle}
\label{R-R diagrams of $R$ with one connection on one handle and three connections on the other handle}\hfill
\smallskip

We consider an R-R diagram of $R$ which has only one band of connections in one handle and has three bands of connections in the other handle, i.e, an R-R diagram of $R$ of the form shown in Figure~\ref{DPCFig21as} with $a,b,c>0$. Then the necessary condition for $H[R]$ to embed in $S^3$ is given in the following Theorem.

\begin{figure}[tbp]
\centering
\includegraphics[width = 0.6\textwidth]{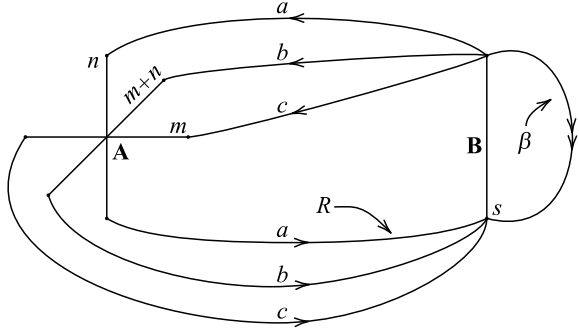}
\caption{An R-R diagram of $R$ where $R$ has only one band of connections in one handle and has three bands of connections in the other handle.}
\label{DPCFig21as}
\end{figure}

\begin{thm}
\label{at least one of a, b, c is zero}
Suppose $R$ is a simple closed curve with an R-R diagram of the form shown in Figure~\emph{\ref{DPCFig21as}} with $m,n\neq 0, (m,n)\neq(\pm1,\pm1)$ with the double signs in same order, $\gcd(m,n) = 1$, and $|s| > 1$. If $a$, $b$, and $c$ are all positive in Figure~\emph{\ref{DPCFig21as}}, then $H[R]$ cannot embed in $S^3$.
\end{thm}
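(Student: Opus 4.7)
The strategy is to assume $H[R]$ embeds in $S^3$ and derive a contradiction via the Culling Lemma (Lemma~\ref{culling lemma}), taking $\alpha=\beta$. With $a,b,c>0$, $|s|>1$, and the coprimality and non-degeneracy hypotheses on $(m,n)$, a routine verification shows that the Heegaard diagram $\mathbb{D}_R$ of $R$ with respect to $\{D_A,D_B\}$ is connected and has no cut vertex, and that after a suitable choice of orientation on $R$ the diagram is positive. Theorem~\ref{waves provide meridians} therefore supplies a distinguished horizontal wave $\omega$ based at $R$.

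By Proposition~\ref{Locate horizontal waves}, the endpoint of $\omega$ in the $B$-handle lies on a connection bordering the unique band in that handle, and hence on one of the $s$-connections of $R$ itself. Inspecting the placement of $\beta$ in the $B$-handle of Figure~\ref{DPCFig21as}, $\omega$ meets $\beta$ transversely in exactly one point of essential intersection. Thus the hypotheses of Lemma~\ref{culling lemma} are met with $\alpha=\beta$.

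Let $M_1$ and $M_2$ be the two meridian representatives of $H[R]$ obtained by surgery on $R$ along $\omega$, labelled so that $2|M_1|\le |R|$. The next step is to read off R-R diagrams of $M_1$ and $M_2$ from $\mathcal{D}_R$ by a case analysis on the signs and relative magnitudes of the labels in the $A$-handle, analogous to the four sub-cases of Figure~\ref{DPCFig9h}. The key structural fact to be verified in each sub-case is that the $B$-handle of $\mathbb{D}_{M_1}$ still contains a single band whose connections are parallel in $F_B$ to the $s$-connections of $R$: since $\omega\cap F_B$ is a single short arc, the wave surgery perturbs only one of the parallel $s$-connections of $R$ in $F_B$, and after surgery the arcs of $M_1$ in $F_B$ remain isotopic to $s$-connections. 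In each sub-case one also needs to check that $\mathbb{D}_{M_1}$ is positive, connected, and has no cut vertex, so that Theorem~\ref{waves provide meridians} applies to $M_1$.

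Once this bookkeeping is complete, $M_1$ admits a distinguished horizontal wave $\omega_1$, and Proposition~\ref{Locate horizontal waves} places the $B$-endpoint of $\omega_1$ on one of the $s$-connections of $M_1$. The same geometric argument that gave $|\omega\cap\beta|=1$ then shows $|\omega_1\cap\beta|=1$, contradicting the conclusion of Lemma~\ref{culling lemma} that $\omega_1$ must meet $\beta$ in more than one essential point. The principal obstacle is the diagrammatic bookkeeping in the third paragraph: namely, verifying in each of the four sub-cases that the $B$-handle of $\mathbb{D}_{M_1}$ retains exactly one band of connections and that $\mathbb{D}_{M_1}$ is positive with no cut vertex.
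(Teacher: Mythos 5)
Your overall strategy---apply the Culling Lemma with $\alpha=\beta$, using the fact that a distinguished horizontal wave must terminate on an $s$-connection in the $B$-handle and hence crosses $\beta$ exactly once---is exactly the engine of the paper's proof, and the step where $\mathbb{D}_{M_1}$ is positive, connected, and without cut-vertex goes through as you describe (it is the paper's Lemma~\ref{contradiction if neither m1 or m2 has a cut-vertex}). However, there are two genuine gaps. First, your claim that ``after a suitable choice of orientation on $R$ the diagram is positive'' is false when $mn<0$: reorienting a single curve flips all intersection signs simultaneously and cannot make a nonpositive diagram positive. That case is not vacuous under the hypotheses and must be excluded by a separate argument; the paper does this quickly by observing that for $mn<0$ the distinguished wave is vertical and surgery along it produces $\beta=B^s$ itself as a meridian representative, which is impossible since $H_1(H[\beta])$ would have $s$-torsion.

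Second, and more seriously, you treat the condition that $\mathbb{D}_{M_1}$ has no cut-vertex as ``diagrammatic bookkeeping'' to be verified, but it can genuinely fail: when $n=1$ the shorter meridian representative can degenerate to $M_1=B^{s-u}A(B^sA)^{a_1+1}$, in which $A$ appears only with exponent one, so the underlying Heegaard diagram has a cut-vertex and Theorem~\ref{waves provide meridians} supplies no horizontal wave based at $M_1$. The Culling Lemma is then unavailable, and a completely different argument is required. In the paper this occupies Lemmas~\ref{case m1 has a cut-vertex} and~\ref{case m2 has a cut-vertex}: for $u>1$ one shows $M_1$ is a Seifert fiber relator so that $H[R,M_1]$ is Seifert-fibered over $S^2$ with three exceptional fibers of indexes $u$, $a_1+2$, and $m(b+c)-c>1$ and hence cannot be $S^3$, while for $u=1$ the curve $M_1$ is primitive and one must rule out $H[R,M_1]$ being an integral homology sphere by an explicit computation in $H_1(H)$, which is where the hypothesis that all three weights $a,b,c$ are positive is actually used. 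Without this branch your argument does not cover all cases, so the proposal as written is incomplete.
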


\begin{proof}
Suppose, to the contrary, that there is a simple closed curve $R$ which has an R-R diagram with the form shown in Figure~\ref{DPCFig21as}, with each of $a$, $b$, and $c$ positive, and $H[R]$ embeds in $S^3$. We may assume that $s>1$ because the R-R diagram of $R$ with $s<-1$ is equivalent to that with $s>1$. We divide the argument into two cases: $mn>0$ and $mn<0$.

First, suppose $mn<0$. Then the Heegaard diagram $\mathbb{D}_R$ of $R$ underlying the R-R diagram in Figure~\ref{DPCFig21as} is nonpositive. Also we see that $\mathbb{D}_R$ is connected and has no cut-vertex. Therefore by Theorem~\ref{waves provide meridians}, one meridian representative of $H[R]$ can be obtained by surgery on $R$ along a vertical wave $\omega_v$. It is easy to see that one meridian representative is a simple closed curve $\beta$ as shown in Figure~\ref{DPCFig21as}. This is a contradiction since $\beta=B^s$ in $\pi_1(H)$ with $s>1$ and $H[\beta]$ embeds in $S^3$ implying that $H_1(H[\beta])$ is torsion-free.

Now, we suppose $mn>0$. The Heegaard diagram $\mathbb{D}_R$ of $R$ is positive, connected, and has no cut-vertex. By using the equivalence of R-R diagrams, we may assume that $m,n>0$. Since $H[R]$ embeds in $S^3$, Theorem~\ref{waves provide meridians} implies that two representatives of the meridian of $H[R]$ can be obtained by surgery on $R$ along a horizontal wave $\omega_h$. By the results of Section~\ref{Locating waves in genus two R-R diagrams} a horizontal wave $\omega_h$ based at $R$ appears in the R-R diagram as shown in Figure~\ref{DPCFig22as}, where $0<u<s$.

\begin{figure}[tbp]
\centering
\includegraphics[width = 0.65\textwidth]{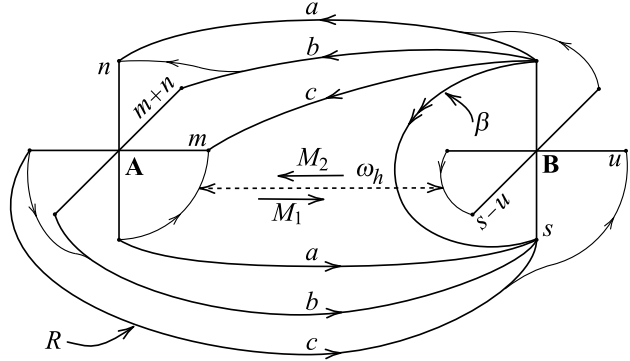}
\caption{A horizontal wave $\omega_h$. }
\label{DPCFig22as}
\end{figure}

\begin{figure}[tbp]
\centering
\includegraphics[width = 1\textwidth]{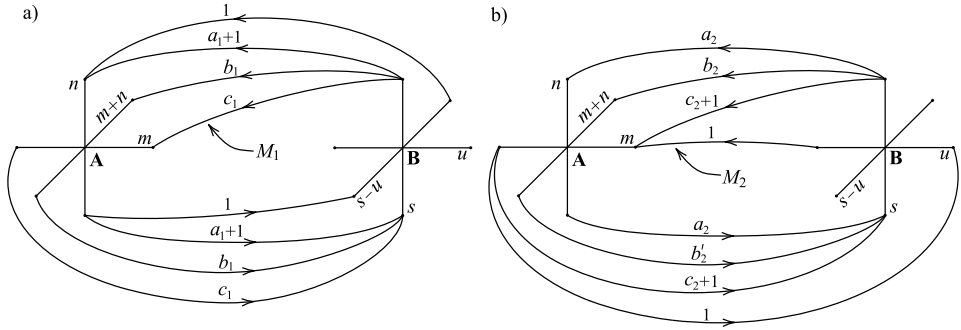}
\caption{Two representations $M_1$ and $M_2$ of the meridian of $H[R]$. }
\label{DPCFig22ds}
\end{figure}

Let $M_1$ and $M_2$ be the two representatives of the meridian of $H[R]$ obtained by surgery on $R$ along $\omega_h$.
Since $a, b, c>0$, their R-R diagrams have the form shown in Figure~\ref{DPCFig22ds}. Then examination of Figure~\ref{DPCFig22ds} shows that $B^s$ appears in both $M_1$ and $M_2$.
Also note that since $m,n,s,u, s-u>0$, the corresponding Heegaard diagrams
of $M_1$ and $M_2$ are both positive.
Now Lemmas~\ref{contradiction if neither m1 or m2 has a cut-vertex}, \ref{case m1 has a cut-vertex} and \ref{case m2 has a cut-vertex} complete the proof.
\end{proof}

\begin{lem}
\label{contradiction if neither m1 or m2 has a cut-vertex}
If $|M_1| \leq |M_2|$ and $M_1$ does not have a cut-vertex, or if $|M_2| \leq |M_1|$ and $M_2$ does not have a cut-vertex, then $H[R]$ does not embed in $S^3$.
\end{lem}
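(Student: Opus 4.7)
The plan is to apply Theorem~\ref{waves provide meridians} to the shorter of the two meridian representatives and then invoke the Culling Lemma (Lemma~\ref{culling lemma}) with $\alpha$ equal to the proper power curve $\beta$ of Figure~\ref{DPCFig21as} to derive a contradiction. By symmetry I would assume without loss of generality that $|M_1| \leq |M_2|$ and that the Heegaard diagram $\mathbb{D}_{M_1}$ has no cut-vertex; the other clause of the statement then follows by the same argument applied to $M_2$.

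First I would verify that the hypotheses of the Culling Lemma are met with the data $(R, \beta, \omega_h)$. The curve $\beta$ is disjoint from $R$ by construction, and inspection of Figure~\ref{DPCFig22as} shows that $\omega_h$ meets $\beta$ essentially in a single transverse point: $\omega_h$ crosses the annulus separating the two handles exactly once on its way from the $A$-handle to the $B$-handle, and on that crossing it hits the bundle of $s$ parallel arcs that form $\beta$ in exactly one point. Since $H[R]$ embeds in $S^3$ and $\omega_h$ is the distinguished horizontal wave of $R$ provided by Theorem~\ref{waves provide meridians}, the hypotheses of Lemma~\ref{culling lemma} are satisfied.

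Next, since $\mathbb{D}_{M_1}$ is positive (as noted just after Figure~\ref{DPCFig22ds}), connected, and by assumption has no cut-vertex, Theorem~\ref{waves provide meridians} furnishes a distinguished horizontal wave $\omega_1$ based at $M_1$. Proposition~\ref{Locate horizontal waves} then pins down its endpoints: in each handle $\omega_1$ attaches to a connection bordering the band of maximal label. In the $B$-handle of $M_1$ the $s$-band coming from $\beta$ survives (this is the observation that $B^s$ appears in $M_1$), and because $0 < u < s$ this $s$-band carries the maximal label in the $B$-handle; hence one endpoint of $\omega_1$ must sit adjacent to this $s$-band. The other endpoint lies on the band of maximal label in the $A$-handle of $M_1$, to be determined by the parameters.

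Finally, I would inspect Figure~\ref{DPCFig22ds} case by case, splitting according to which of the surviving bands in the $A$-handle of $M_1$ has the largest label, and count the signed intersection $|\omega_1 \cap \beta|$. The expectation is that in every case $\omega_1$ crosses the bundle of $\beta$-parallel arcs at most twice, and that whenever it crosses twice the two crossings occur with opposite signs because $\omega_1$ enters and then leaves the $B$-handle through the same bundle of parallel arcs. Either outcome contradicts the conclusion of Lemma~\ref{culling lemma}, which forces $\omega_1$ to meet $\beta$ either in more than two points or in two points of the \emph{same} sign. The main obstacle I anticipate is the casework in the $A$-handle, since the maximal label there depends on the interplay among $m$, $n$, $m+n$, $u$, and $s-u$; keeping the signed intersection count correct across each subcase is the delicate bookkeeping step, but the fact that $\beta$ contributes only a clean bundle of parallel arcs in the $B$-handle keeps the count tractable.
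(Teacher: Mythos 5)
Your proposal is correct and follows essentially the same route as the paper: the paper's proof likewise observes that since $B^s$ appears in $M_1$, the distinguished horizontal wave based at $M_1$ attaches next to the maximal $s$-band in the $B$-handle and therefore meets $\beta$ in a single point, exactly as $\omega_h$ does in Figure~\ref{DPCFig22as}, contradicting the Culling Lemma with $\alpha=\beta$. Your write-up simply fills in more of the bookkeeping (via Proposition~\ref{Locate horizontal waves}) that the paper compresses into ``it is easy to see.''
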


\begin{proof}
Suppose $|M_1| \leq |M_2|$ and $M_1$ does not have a cut-vertex. Since $M_1$ is positive, there is a horizontal wave $\omega'_{h}$ based at $M_1$. However, it is easy to see from Figure~\ref{DPCFig22ds}a that because $B^s$ appears in $M_1$, as the horizontal wave $\omega_h$ in Figure~\ref{DPCFig22as} $\omega'_{h}$ intersects $\beta$ in a single point. This is a contradiction to
the Culling Lemma of Lemma~\ref{culling lemma}.

A similar argument works to show that if $|M_2| \leq |M_1|$, and $H[R]$ embeds in $S^3$, then $M_2$ must have a cut-vertex.
\end{proof}

\begin{lem}
\label{case m1 has a cut-vertex}
If $M_1$ has a cut-vertex, then $H[R]$ does not embed in $S^3$.
\end{lem}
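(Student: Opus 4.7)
The plan is to analyze when $M_1$ can have a cut-vertex and then use a change of cutting disks to apply the Culling Lemma of Lemma~\ref{culling lemma}. Since $M_1$ contains the subword $B^s$ with $s>1$ (visible in Figure~\ref{DPCFig22ds}a), the $B$-handle of $M_1$ already carries a band of label strictly greater than $1$, so any cut-vertex of $M_1$ must come from its $A$-handle. A direct inspection of Figure~\ref{DPCFig22ds}a, reading off the exponents of $A$ in the cyclic word representing $M_1$, shows that all bands in the $A$-handle of $M_1$ have labels in $\{0, \pm 1\}$ only when the parameters $(m,n,u)$ satisfy a very short list of relations; I expect the generic consequence to be $m=1$, together with small numerical constraints tying $u$, $s-u$, and the weights $a,b,c$ together.

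For each such parameter configuration, use the hybrid-diagram construction of Section~\ref{hybrid diagrams} to perform a change of cutting disks on $H$, replacing $D_A$ by a bandsum $D_{A'}$ of $D_A$ with copies of $D_B$ along appropriate arcs of $M_1$. Because the $B$-handle of $M_1$ already carries a band of label $s>1$, after the move the R-R diagram of $M_1$ with respect to $\{D_{A'}, D_B\}$ has an underlying Heegaard diagram which is connected and has no cut-vertex. Moreover, since the change of cutting disks is only an automorphism of $\pi_1(H)$ and does not move any curves on $\partial H$, the simple closed curves $R$, $M_1$, the proper power curve $\beta$, and the original distinguished wave $\omega_h$ of Figure~\ref{DPCFig22as} are all unchanged on $\partial H$, and in particular $\beta$ remains disjoint from both $R$ and $M_1$.

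Now apply Theorem~\ref{waves provide meridians} in the new cutting-disk system to produce a distinguished wave $\omega_1$ based at $M_1$, and use Proposition~\ref{Locate horizontal waves} to place its endpoints on the bands of maximal label in each new handle. The key remaining step is to verify that $\omega_1$ meets $\beta$ transversely in exactly one point, or in two points of opposite sign, by tracing $\omega_1$ through the modified diagram. Since the distinguished wave $\omega_h$ of $R$ of Figure~\ref{DPCFig22as} crosses $\beta$ in exactly one point, the hypotheses of the Culling Lemma (applied with $\alpha = \beta$) are satisfied; the offending intersection pattern of $\omega_1$ with $\beta$ then contradicts its conclusion, yielding the desired absurdity.

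The main obstacle is the bookkeeping in the second and third steps: correctly writing down the new R-R diagram of $M_1$ under the hybrid-diagram move (equivalently, the new cyclic word in the new basis of $\pi_1(H)$), identifying the maximal-label bands in each new handle so that Proposition~\ref{Locate horizontal waves} pinpoints the endpoints of $\omega_1$, and then computing $\omega_1 \cap \beta$ geometrically. Several sub-cases may need to be treated separately, keyed by which of the possibilities ($m = 1$, $u = 1$, $s-u = 1$, or small specific values of the weights $a, b, c$) forces the cut-vertex in $M_1$; but in each sub-case I expect the same contradiction with Lemma~\ref{culling lemma} to emerge.
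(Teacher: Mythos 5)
Your proposal diverges from the paper's argument in a way that leaves a genuine gap. First, a factual point: reading off Figure~\ref{DPCFig22ds}a, the cut-vertex condition forces $A$ to appear in $M_1$ only with exponent one, which means $n=1$ and $b_1=c_1=0$ (so $M_1 = B^{s-u}A(B^sA)^{a_1+1}$ as in (\ref{form of m1 with a cut-vertex})); since $(m,n)\neq(1,1)$ and $\gcd(m,n)=1$, this gives $m>1$, the opposite of your expected ``generic consequence $m=1$.'' The value of $m>1$ is used crucially later (it is what makes $m(b+c)-c>1$).

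The more serious problem is the plan to rescue the Culling Lemma by a change of cutting disks. Lemma~\ref{culling lemma} ties all of its hypotheses to a \emph{single} cutting system $\{D_A,D_B\}$: $R$'s diagram must be connected and cut-vertex free with a distinguished wave $\omega$ meeting $\alpha$ once, $M_1$ must be obtained from $R$ by surgery along that $\omega$, the inequality $2|M_1|\le|R|$ is complexity in that same system, and $M_1$'s diagram in that same system must be connected and cut-vertex free. Replacing $D_A$ by a bandsum to remove $M_1$'s cut-vertex will in general destroy these hypotheses for $R$ (its new diagram need not be positive, need not have the same distinguished wave, and the complexity comparison need not survive), and you have not verified any of this. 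Worse, in the subcase $u=1$ the change of coordinates $A\mapsto B^{-s}A$ sends $M_1$ to the \emph{primitive} word $B^{-1}A^{a_1+2}$, so $H[M_1]$ is a solid torus and the obstruction to $H[R,M_1]\cong S^3$ is purely homological; no wave-intersection pattern with $\beta$ can detect it. This is precisely why the paper abandons the Culling Lemma here and argues directly: for $u>1$ it exhibits (via the auxiliary curves $f$ and $\tau$ of Figure~\ref{DPCFig22bs}) a regular fiber of the Seifert fibration of $H[M_1]$ meeting $R$ algebraically $m(b+c)-c>1$ times, so $H[R,M_1]$ is Seifert-fibered over $S^2$ with three exceptional fibers and cannot be $S^3$ (Claim~\ref{claim 11}); for $u=1$ it computes $[\vec{M_1}]$ and $[\vec{R}]$ in $H_1(H)$ and shows the determinant condition (\ref{M,m1 determinant simplified}) for a homology sphere always fails (Claim~\ref{claim 12}). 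Your outline would need to be replaced by, or supplemented with, arguments of this kind to close the case.
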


\begin{proof}
$M_1$ has the R-R diagram of the form in Figure~\ref{DPCFig22ds}a, where $A^n$ and $B^s$ appear in $M_1$. Because $s > 1$ and $(m,n) \neq (1,1)$, $M_1$ can have a cut-vertex only if $A$ appears in $M_1$ only with exponent one and thus $b_1=c_1=0$ and $n=1$ in Figure~\ref{DPCFig22ds}a. It follows that
\begin{equation}
\label{form of m1 with a cut-vertex}
M_1 = B^{s-u}A(B^sA)^{a_1+1}.
\end{equation}

\begin{claim}
\label{claim 11}
If $M_1$ has a cut-vertex, and $ u > 1$ in Figure~\emph{\ref{DPCFig22ds}a}, then $H[R]$ does not embed in $S^3$.
\end{claim}

\begin{figure}[tbp]
\centering
\includegraphics[width = 0.65\textwidth]{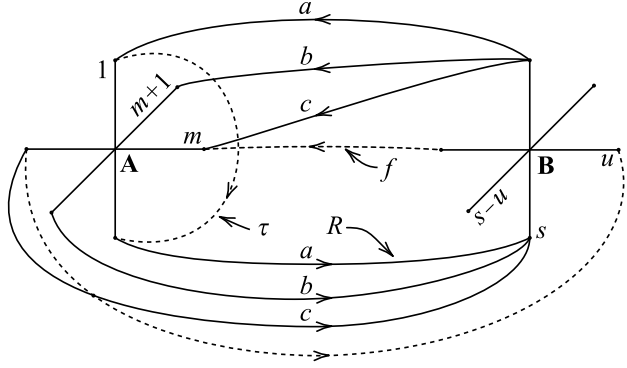}
\caption{The original R-R diagram of $R$ in Figure~\ref{DPCFig21as} with $n$ set equal to one, and with two additional simple closed curves $f$ and $\tau$ added to the diagram.}
\label{DPCFig22bs}
\end{figure}

\begin{proof} [Proof of Claim~\emph{\ref{claim 11}}]
We will show that if $M_1$ has a cut-vertex, and $ u > 1$ in Figure~\ref{DPCFig22ds}a, then $H[M_1]$ is Seifert-fibered over $D^2$ with exceptional fibers of indexes $u$ and $a_1+2$, and then $H[R,M_1]$ is Seifert-fibered over $S^2$ with exceptional fibers of indexes $u$, $a_1+2$ and $m(b+c) - c$. Since $u > 1$ and $a_1+2 > 0$, it will follow that $H[R,M_1]$ is not $S^3$ once we show that $m(b+c) - c > 1$.

To see that $H[M_1]$ is Seifert-fibered and $m(b+c) - c > 1$, consider Figure~\ref{DPCFig22bs}. This figure shows the original R-R diagram of $R$ in Figure~\ref{DPCFig21as} with $n$ set equal to one, and with two additional simple closed curves $f$ and $\tau$ added to the diagram. Notice that if $M_1$ has a cut-vertex, then, since $M_1$ has the form shown in (\ref{form of m1 with a cut-vertex}), i.e., the form shown in ~\ref{DPCFig22ds}a with $b_1=c_1=0$ , $M_1$ is disjoint from both $f$ and $\tau$.

Now let $\tau(f)$ denote the simple closed curve in Figure~\ref{DPCFig22bs} obtained from $f$ by twisting $f$ to the left $m$ times about the curve $\tau$. Then $\tau(f)$ and $M_1$ are disjoint, and $\tau(f)$ represents the proper power $B^u$ in $\pi_1(H)$. Note also that the automorphism of $\pi_1(H)$, which takes $A \mapsto B^{-s}A$ and fixes $B$, takes $M_1 \mapsto B^{-u}A^{a_1+2}$. Thus by Lemma 2.2 of \cite{D03} or Theorem 4.2 of \cite{K20}, $M_1$ clearly has the form of a Seifert fiber relator, i.e., $H[M_1]$ is Seifert-fibered with two exceptional fibers, as claimed, and the simple closed curve $\tau(f)$ represents a regular fiber of $H[M_1]$.

Next, another glance at Figure~\ref{DPCFig22bs} shows that $R$ and $\tau(f)$ have $\pm(m(b+c) - c)$ algebraic intersections. But, $m > 1$ and both $b$ and $c$ are positive, so clearly $m(b+c) - c > 1$. This implies $H[R,M_1]$ is Seifert-fibered over $S^2$ with three exceptional fibers, and so it can't be $S^3$. This concludes the proof of the claim.
\end{proof}

\begin{claim} \label{claim 12}
If $M_1$ has a cut-vertex, and $ u = 1$ in Figure~\emph{\ref{DPCFig22as}}, then $H[R]$ does not embed in $S^3$.
\end{claim}

\begin{proof} [Proof of Claim~\emph{\ref{claim 12}}]
If $u = 1$, then, as (\ref{form of m1 with a cut-vertex}) shows, $M_1$ is primitive in $\pi_1(H)$ and $H[M_1]$ is a solid torus. Then $H[R,M_1]$ will be $S^3$ if and only if it is an integral homology sphere. We will show this never happens when all three weights $a$, $b$ and $c$ of the R-R diagram of $R$ in Figure~\ref{DPCFig21as} are positive.

The proof starts with the R-R diagram in Figure~\ref{DPCFig22cs}. This diagram shows a simple closed curve $M_0$ which intersects $\beta$ transversely in a single point. Note that twisting $M_0$ to the right $a_1+1$ times about the curve $\tau$ in Figure~\ref{DPCFig22cs} transforms $M_0$ into the curve $M_1$ in Figure~\ref{DPCFig22ds}a with $b_1=c_1=0$.

\begin{figure}[tbp]
\centering
\includegraphics[width = 0.65\textwidth]{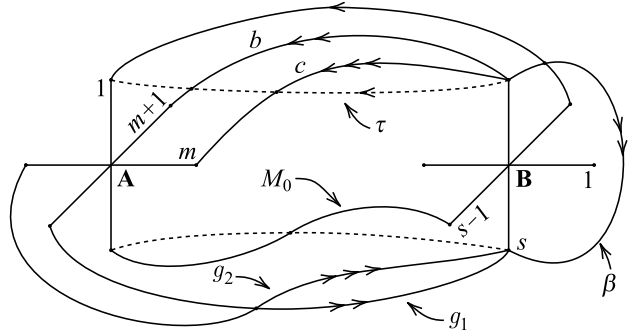}
\caption{The original R-R diagram of $R$ in Figure~\ref{DPCFig21as} with $u$ set equal to one, and with four additional simple closed curves $\beta$, $\tau$, $g_1$ and $g_2$ added to the diagram.}
\label{DPCFig22cs}
\end{figure}

Now let $N$ be a regular neighborhood of $\beta$ and $M_0$ in $\partial H$, and let $N'$ be the punctured torus $\partial H -\text{int}(N)$.
Note that the curves $g_1$ and $g_2$ in Figure~\ref{DPCFig22cs} intersect transversely at a single point and form a spine of $N'$. Then the images $\tau(g_1)$ of $g_1$ and $\tau(g_2)$ of $g_2$ under Dehn twisting $a_1+1$ times about $\tau$ form a spine of a punctured torus in $\partial H$ which contains $R$. So if $R$ in Figure~\ref{DPCFig21as} has positive weights $a$, $b$ and $c$, then $[\vec{R}] = b[\tau(\vec{g}_1)] + c[\tau(\vec{g}_2)]$ in $H_1(\partial H)$.

Turning to $H_1(H)$, and choosing coordinates for $H_1(H)$ so $[\vec{A}] = (1,0)$ and $[\vec{B}] = (0,1)$ and letting $j=a_1+1$, we have
\begin{align}
[\vec{M}_1] &= (1,s-1) +j(1,s) \notag \\
&= (j+1,(j+1)s - 1), \text{\quad and} \notag \\
[\vec{R}] &= (b(m+1), bs) + bj(1,s) + c(m,s) +cj(1,s) \notag \\
&= ((j+m)(b+c) + b, (j+1)(b+c)s). \notag
\end{align}
The weight $a$ of $R$ in Figure~\ref{DPCFig21as} is then given by $a = j(b+c)$, which will be positive if $j > 0$. Then there will exist an $R$, $M_1$ pair with $a,b,c > 0$, such that $H[R,M_1]$ is $S^3$ if and only if it is possible to choose $b,c,j > 0$ and $m > 1$ with $\gcd(b,c) = 1$ so that
\begin{equation}
\label{M,m1 determinant}
(j+1)s[(j+m)(b+c) + b] = (j+1)^2(b+c)s + (j+m)(b+c) + b + \delta,
\end{equation}
with $\delta = \pm1$. (Note that $m > 1$ since $n = 1$ and $(m,n) \neq(1,1)$. The $\gcd(b,c) = 1$ condition arises because $R$ is a simple closed curve; cf Lemma~\ref{gcd(a,b)=1}.)

Subtracting the $(j+1)^2(b+c)s$ term from both sides of (\ref{M,m1 determinant}) yields
\begin{equation}
\label{M,m1 determinant simplified}
(j+1)s[(m-1)(b+c)+b] = (j+m)(b+c) + b + \delta.
\end{equation}

We will show (\ref{M,m1 determinant simplified}) can't be satisfied by showing that the L.H.S. of (\ref{M,m1 determinant simplified}) is always larger than the R.H.S. of (\ref{M,m1 determinant simplified}).

Clearly (\ref{M,m1 determinant simplified}) is the sum of
\begin{gather}
s(j+1)(m-1)(b+c) = (j+m)(b+c), \text{\quad and} \label{inequality 1}\\
bs(j+1) = b + \delta. \label{inequality 2}
\end{gather}

Then, since $m > 1$ and $s > 1$, we have $js(m-1) > j$ and $(s-1)(m-1) \geq 1$. The latter inequality clearly implies $s(m-1) \geq m$. So we have $s(j+1)(m-1) > (j+m)$. Thus the L.H.S. of (\ref{inequality 1}) is always greater than the R.H.S. of (\ref{inequality 1}). Next, since $b > 0$, $j > 0$ and $s > 1$, we have $b[s(j+1) -1] > 1$, which implies the L.H.S. of (\ref{inequality 2}) is always greater than the R.H.S. of (\ref{inequality 2}). It follows that $H[R,M_1]$ is never $S^3$ when $M_1$ has a cut-vertex and $u = 1$. So Claim~\ref{claim 12} has been proved.
\end{proof}
Finally, the proofs of Claims~\ref{claim 11} and \ref{claim 12} combine to prove Lemma~\ref{case m1 has a cut-vertex}.
\end{proof}

\begin{lem}
\label{case m2 has a cut-vertex}
Suppose $M_2$ is the representative of the meridian of $H[R]$ obtained by surgery on $R$ along the horizontal wave $\omega_h$ shown in Figure~\emph{\ref{DPCFig22as}}.
If $M_2$ has a cut-vertex, then $H[R]$ does not embed in $S^3$.
\end{lem}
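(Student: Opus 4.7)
The plan is to mimic the proof of Lemma~\ref{case m1 has a cut-vertex}, swapping the roles of $u$ and $s-u$, and of $n$ and $m$, throughout. From Figure~\ref{DPCFig22ds}b the exponents of $A$ appearing in $M_2$ are $1$ and $m$, while the exponents of $B$ are $u$ and $s$. Under the standing hypotheses $s>1$, $\gcd(m,n)=1$, and $(m,n)\neq(1,1)$, the diagram of $M_2$ can have a cut-vertex only if $A$ appears in $M_2$ with a single exponent, which forces the analogs $b_2=c_2=0$ of the $M_1$-weights to vanish and $m=1$, hence $n>1$. In this reduced situation
\begin{equation*}
M_2 = B^{u}A(B^{s}A)^{a_2+1}.
\end{equation*}

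The next step is to split into two subclaims according to whether $s-u>1$ or $s-u=1$, paralleling Claims~\ref{claim 11} and \ref{claim 12}. For $s-u>1$, apply the automorphism of $\pi_1(H)$ sending $A\mapsto B^{-s}A$ and fixing $B$; this carries $M_2$ to $B^{-(s-u)}A^{a_2+2}$, so by Lemma 2.2 of \cite{D03} or Theorem 4.2 of \cite{K20}, $H[M_2]$ is Seifert-fibered over $D^2$ with exceptional fibers of indexes $s-u$ and $a_2+2$. Following the template of Claim~\ref{claim 11}, I would introduce auxiliary curves $f'$ and $\tau$ in the R-R diagram (the analog of Figure~\ref{DPCFig22bs} with $m=1$) so that $\tau(f')$ is disjoint from $M_2$ and represents the regular fiber $B^{s-u}$, and then compute the algebraic intersection of $R$ with $\tau(f')$. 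The count yields a quantity of the form $\pm\bigl((n-1)(b+c)+c\bigr)$, which is strictly greater than $1$ because $n>1$ and $b,c>0$. Hence $H[R,M_2]$ is Seifert-fibered over $S^2$ with three exceptional fibers, and so is not $S^3$.

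For $s-u=1$, the same automorphism sends $M_2$ to $B^{-1}A^{a_2+2}$, which is primitive, so $H[M_2]$ is a solid torus and $H[R,M_2]\cong S^3$ if and only if it is an integral homology sphere. Mirroring the proof of Claim~\ref{claim 12}, I would construct a punctured-torus spine $\{g_1',g_2'\}$ for $\partial H-\text{int}(N(\beta\cup M_0))$, where $M_0$ is the untwisted precursor of $M_2$, write $[\vec R]=b[\tau(\vec g_1')]+c[\tau(\vec g_2')]$ in $H_1(\partial H)$, and use the coordinates $[\vec A]=(1,0)$, $[\vec B]=(0,1)$ in $H_1(H)$ together with $j=a_2+1$ to express $[\vec R]$ and $[\vec{M_2}]$. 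Setting their $2\times 2$ determinant equal to $\delta=\pm 1$ gives an equation analogous to (\ref{M,m1 determinant simplified}), whose two summands individually satisfy strict inequalities forced by $n>1$, $s>1$, $j>0$, and $b,c>0$. Thus no integer solution exists, and $H[R,M_2]$ is never $S^3$.

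The main obstacle is the bookkeeping in the $s-u=1$ subclaim: correctly setting up the spine curves, making sure the Dehn-twist image $\tau(f')$ indeed represents the regular fiber (with the swapped role of $m$), and verifying the algebraic intersection count so that the resulting determinant equation cleanly splits into two strict inequalities mirroring equations (\ref{inequality 1}) and (\ref{inequality 2}). With both subclaims in hand, the lemma follows exactly as Lemma~\ref{case m1 has a cut-vertex} was concluded from Claims~\ref{claim 11} and \ref{claim 12}.
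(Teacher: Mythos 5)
Your proposal is correct and is essentially the paper's own argument: the paper proves this lemma in one line by invoking the symmetry of the diagram in Figure~\ref{DPCFig21as} and saying the proof of Lemma~\ref{case m1 has a cut-vertex} carries over with minor notational changes, which is exactly the $u\leftrightarrow s-u$, $m\leftrightarrow n$ swap you carry out (correctly identifying $M_2=B^uA(B^sA)^{a_2+1}$, the image $B^{-(s-u)}A^{a_2+2}$ under $A\mapsto B^{-s}A$, and the resulting case split $s-u>1$ versus $s-u=1$).
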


\begin{proof}
Note that the symmetry of the original diagram of $R$ in Figure~\ref{DPCFig21as} shows that the arguments of Lemma~\ref{case m1 has a cut-vertex}, used in the case that $M_1$ has a cut-vertex, will also work for the case $M_2$ has a cut-vertex, provided minor notational changes are made.
\end{proof}

The following two lemmas deal with the exceptions for the parameters $m$ and $n$ in
Theorem~\ref{at least one of a, b, c is zero}.

\begin{figure}[tbp]
\centering
\includegraphics[width = 1\textwidth]{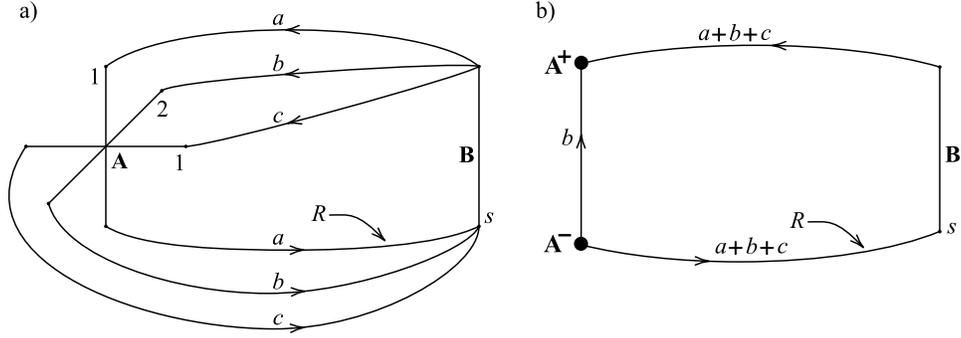}
\caption{The R-R diagram of $R$ and its hybrid diagram.}
\label{DPCFig22es}
\end{figure}
\begin{figure}[tbp]
\centering
\includegraphics[width = 1\textwidth]{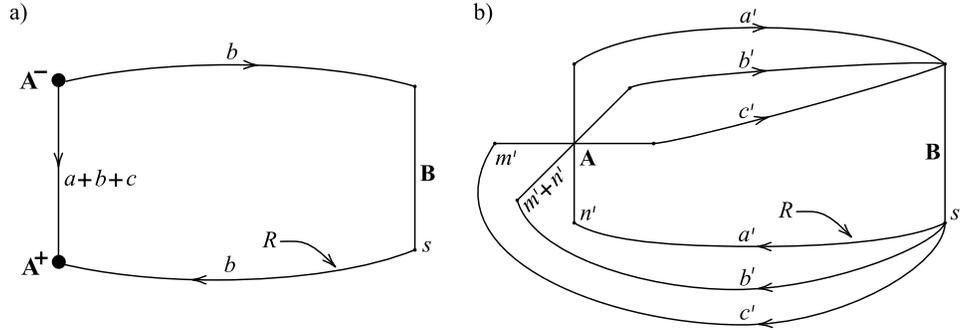}
\caption{The hybrid diagram of $R$ and its R-R diagram after performing a change
of cutting disks inducing an automorphism of $\pi_1(H)$ that takes $A \mapsto AB^{-s}$.}
\label{DPCFig22fs}
\end{figure}

\begin{lem}
\label{may assume (m,n) ?(1,1)}
Suppose $R$ and $\beta$ are simple closed curves with an R-R diagram of the form shown in Figure~\emph{\ref{DPCFig21as}} with $(m,n)=(\pm1,\pm1)$ with the double signs in same order, $a,b,c>0$, and $|s| > 1$. Then there is a change of cutting disks of $H$ such that $R$ and $\beta$ have an R-R diagram of the form shown in Figure~\emph{\ref{DPCFig21as}} with $m , n \neq 0, (m,n)\neq(\pm1,\pm1)$, $\gcd(m,n) = 1$, and $|s| > 1$.
\end{lem}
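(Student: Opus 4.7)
The plan is to apply the hybrid-diagram machinery of Subsection~\ref{hybrid diagrams} in order to effect a change of cutting disks of $H$. Starting from the R-R diagram of $R$ and $\beta$ in Figure~\ref{DPCFig21as} with $(m,n) = (\pm 1, \pm 1)$ (double signs in the same order), I would first pass to the hybrid diagram obtained by replacing the $A$-handle portion of the R-R diagram by the corresponding portion of the underlying Heegaard diagram, producing the diagram pictured in Figure~\ref{DPCFig22es}. Because $(m,n) = (\pm 1, \pm 1)$, the $A$-handle is the degenerate one: every $A$-connection appears with exponent $\pm 1$, which is why the change must be made on $D_A$.

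Next, I would carry out the change of cutting disks that fixes $D_B$ and replaces $D_A$ by the disk obtained by bandsumming $D_A$ with $|s|$ copies of $D_B$ along arcs of $R$ passing through the $s$-connection of the $B$-handle. On $\pi_1(H)$ this induces the automorphism $A \mapsto AB^{-s}$, $B \mapsto B$; in the hybrid diagram it corresponds to dragging one of the vertices $A^{\pm}$ across the $s$-connection, producing the diagram in Figure~\ref{DPCFig22fs}. Since $\beta$ represents $B^s$ in $\pi_1(H)$, it is fixed by this automorphism and still appears in the transformed diagram as the unique connection of label $s$ in the new $B$-handle; in particular $\beta$ remains a proper power curve disjoint from $R$, and the normalized form of Figure~\ref{DPCFig21as} is preserved.

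To conclude, I would reassemble the new $A$-handle from the transformed hybrid diagram and read off the new R-R labels. Each of the three bands of $A$-connections acquires a shift of $\mp s$ in its $B$-exponent; if the new labels on the two basic nonparallel bands are written $(m', n')$, the third is $(m'+n')$, and Proposition~\ref{R-R labels and unimodular determinant conditions} yields the unimodular relation together with $\gcd(m', n') = 1$. The key numerical input is $|s| > 1$: shearing $(\pm 1, \pm 1)$ by $\mp s$ along the $B$-direction replaces at least one coordinate by an integer of absolute value $|s \pm 1| \geq |s|-1 \geq 1$, and a direct check of the four sign cases shows that one cannot simultaneously have $m' = \pm 1$ and $n' = \pm 1$. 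Thus the transformed diagram has exactly the form claimed: $m', n' \neq 0$, $\gcd(m', n') = 1$, $(m',n') \neq (\pm 1, \pm 1)$, and still $|s| > 1$.

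The main obstacle I anticipate is purely bookkeeping: one has to verify carefully that the reassembled $A$-handle indeed has three bands of connections organized in the normalized $(m', n', m'+n')$ pattern of Figure~\ref{DPCFig21as} (rather than, say, collapsing two of them), and that the weights $a,b,c$ retransform correctly so that the resulting picture is literally an instance of Figure~\ref{DPCFig21as}. Once this finite check on the four sign combinations is completed, the lemma follows directly from the hybrid-diagram construction together with Proposition~\ref{R-R labels and unimodular determinant conditions}.
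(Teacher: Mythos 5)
Your overall strategy coincides with the paper's: pass to the hybrid diagram of Figure~\ref{DPCFig22es}, perform the change of cutting disks inducing $A \mapsto AB^{-s}$, $B \mapsto B$, observe that $\beta = B^s$ is unaffected, and then read off the new $A$-handle labels. The gap is in the final, decisive step. You compute the new labels by ``shearing $(\pm 1,\pm 1)$ by $\mp s$ along the $B$-direction,'' concluding that one coordinate becomes $1 \mp s$ with $|1\mp s| \geq |s|-1$. But the labels $m$, $n$, $m+n$ on the three bands of the $A$-handle are not the two coordinates of a homology class in a handle acted on by a shear; they are three separate intersection numbers with $\partial D_A$, i.e.\ the exponents of $A$ occurring in the cyclic word of $R$. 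The automorphism $A \mapsto AB^{-s}$ acts on that word by $AB^{s} \mapsto A$ and $A^{2}B^{s}\mapsto AB^{-s}A$, so consecutive $AB^{s}$-syllables collapse and merge into longer runs of $A$: the new $A$-labels are run-lengths governed by the weights $a,b,c$ and the cyclic order of the connections, and they do not involve $s$ at all. In particular the inequality $|s\pm 1|\geq |s|-1\geq 1$ is not about the quantities that actually appear, and your ``four sign cases'' check does not establish $(m',n')\neq(\pm 1,\pm 1)$.

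What is needed — and what the paper supplies — is precisely the ``bookkeeping'' you defer at the end. Reading $R$ from the outermost of the $c$ parallel edges entering the $1$-labelled band (here $a,b,c>0$ is used), $R$ contains a subword $(AB^{s})^{i}A^{2}B^{s}$ with $i>1$, whose image under $A\mapsto AB^{-s}$ is $A^{i+1}B^{-s}A$. Hence the new $A$-handle contains a band of label $i+1>2$, which forces $(m',n')\neq(\pm1,\pm1)$; positivity of the new intersections with $D_A$ and the single $s$-band in the new $B$-handle (visible in Figure~\ref{DPCFig22fs}) then put the diagram back into the form of Figure~\ref{DPCFig21as}, with $\gcd(m',n')=1$ coming from Proposition~\ref{R-R labels and unimodular determinant conditions} as you say. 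Two smaller inaccuracies: with $(m,n)=(\pm1,\pm1)$ the third band has label $\pm2$, so it is not true that every $A$-connection has exponent $\pm1$; and the paper's convention is that $D_A$ is kept fixed while $D_B$ is replaced by a bandsum to realize $A\mapsto AB^{-s}$, the opposite of what you state.
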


\begin{proof}
Without loss of generality, we may assume that $(m,n)=(1,1)$ and $s>1$ in the R-R diagram of Figure~\ref{DPCFig21as}. We use the argument of hybrid diagrams introduced in Section~\ref{hybrid diagrams}. The R-R diagram of $R$ and its hybrid diagram are illustrated in Figure~\ref{DPCFig22es}. In its hybrid diagram, we drag the vertex $A^-$
together with the edges meeting the vertex $A^{-}$ over the $s$-connection on the $B$-handle. This performance
corresponds to a change of cutting disks inducing an automorphism of $\pi_1(H)$ that takes $A \mapsto AB^{-s}$ and leaves $B$ fixed.
The resulting hybrid diagram of $R$ is depicted in Figure~\ref{DPCFig22fs}a, where there is only one band of connections labeled by $s$ in the $B$-handle and $R$ intersects the cutting disk $D_A$ positively in the $A$-handle. Therefore the corresponding R-R diagram of $R$ must be the form given in Figure~\ref{DPCFig22fs}b, where $m', n'>0$ and $a'+b'+c'=b$.
We will show that $(m', n')\neq (1,1)$ to complete the proof.

In the R-R diagram of Figure~\ref{DPCFig22es}a, if we read $R$ from the outermost edge of the $c$ parallel edges entering the $1$-connection in the $A$-handle,  ,
then we can see that $R$ has subword $(AB^s)^iA^2B^s$ with $i>1$. Under the automorphism $A \mapsto AB^{-s}$, the subword $(AB^s)^iA^2B^s$ is sent to $A^{i+1}B^{-s}A$.
This implies that the label $i+1(>2)$ appears in the $A$-handle. Thus $R$ has a band of connections with label greater than $2$ in the $A$-handle, which implies that $(m', n')\neq (1,1)$.
\end{proof}

\begin{lem}
\label{may assume m,n not zero}
Suppose $R$ and $\beta$ are simple closed curves with an R-R diagram of the form shown in Figure~\emph{\ref{DPCFig21as}} with either $m=0$ or $n=0$, $a,b,c>0$, and $|s| > 1$. Then there is a change of cutting disks of $H$ such that $R$ and $\beta$ have an R-R diagram of the form shown in Figure~\emph{\ref{DPCFig21as}} with $m , n \neq 0, (m,n)\neq(\pm1,\pm1)$, $\gcd(m,n) = 1$.
\end{lem}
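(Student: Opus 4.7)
The plan is to mirror the argument given in the preceding Lemma by using the hybrid-diagram formalism of Section~\ref{hybrid diagrams} to perform a change of cutting disks of $H$, and then to verify that in the new cutting-disk basis the parameters describing $R$ satisfy $m', n' \neq 0$, $\gcd(m',n') = 1$, and $(m',n') \neq (\pm1,\pm1)$.

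First, using the equivalences of R-R diagrams, I reduce to a normal form. Since $\gcd(m,n) = 1$, the hypothesis $m = 0$ or $n = 0$ forces the other parameter to be $\pm 1$; by relabeling the handles, reorienting $R$ and $\beta$, and reversing the sign of $s$ if necessary, I may assume $m = 0$, $n = 1$, and $s > 1$. In this degenerate form the ``$m$-band'' on the $A$-handle carries no connections, so the Heegaard word of $R$ in the generators $A, B$ of $\pi_1(H)$ decomposes into cyclic blocks of $AB^s$ and isolated $A$'s, with the weights $a, b, c > 0$ controlling how these blocks are concatenated around the diagram.

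Next, I would draw the hybrid diagram of $R$ exactly as in Figure~\ref{DPCFig22es} and drag the vertex $A^-$, together with the edges of $R$ meeting it, over the $s$-connection on the $B$-handle. This induces the automorphism of $\pi_1(H)$ sending $A \mapsto AB^{-s}$ and fixing $B$. The resulting hybrid diagram will again have a single band labeled $s$ on the $B$-handle, and retrieving the $A$-handle from the identification of $A^+$ with $A^-$ yields a new R-R diagram of the shape in Figure~\ref{DPCFig21as} with new parameters $(a',b',c',m',n')$ that can be read directly from the original weights. That $\gcd(m',n') = 1$ is automatic since $R$ remains a single simple closed curve, and that $m', n' \neq 0$ follows because $a, b, c > 0$ forces the new $A$-handle to contain connections of at least two distinct isotopy types.

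The main obstacle, as in the preceding lemma, is to rule out $(m',n') = (\pm 1,\pm 1)$. The strategy is to locate a subword of $R$ of the form $(AB^s)^i A^2 B^s$ with $i \geq 1$; under $A \mapsto AB^{-s}$ this subword is sent to $A^{i+1}B^{-s}A$, forcing a band of connections in the new $A$-handle carrying the label $i+1 \geq 2$ and hence forbidding $(m',n') = (\pm 1,\pm 1)$. I expect the positivity of all three weights $a, b, c$ together with $s > 1$ to guarantee that at least one such subword occurs in the cyclic word of $R$, and this is the only place where the hypothesis $a,b,c > 0$ is essentially used.
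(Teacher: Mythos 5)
Your overall strategy (normalize so that one $A$-handle label is $0$ and the other is $\pm 1$, pass to the hybrid diagram, drag $A^-$ over the $s$-connection, then check the new parameters) is the same as the paper's, but there is a genuine gap at the central step: a \emph{single} drag, i.e.\ the automorphism $A \mapsto AB^{-s}$, does not in general return a diagram of the form of Figure~\ref{DPCFig21as}. With the $0$-connection present, the cyclic word of $R$ is a balanced product of $a$ blocks $B^{s}$ and $b+c$ blocks $AB^{s}$; writing $a=\rho(b+c)+r$ with $0\le r<b+c$, this is a product of $(b+c-r)$ blocks $AB^{(\rho+1)s}$ and $r$ blocks $AB^{(\rho+2)s}$. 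Applying $A\mapsto AB^{-s}$ once yields blocks $AB^{\rho s}$ and $AB^{(\rho+1)s}$, so whenever $a\ge b+c$ (i.e.\ $\rho\ge 1$) the image curve still meets $D_B$ with two distinct nonzero multiplicities $\rho s$ and $(\rho+1)s$: the $B$-handle no longer carries a single band of $s$-connections, the curve $R$ re-enters the $B$-handle, and your claim that ``the resulting hybrid diagram will again have a single band labeled $s$ on the $B$-handle'' fails (e.g.\ $a=5$, $b=c=1$, $s=2$ gives $R=AB^{6}AB^{8}\mapsto AB^{4}AB^{6}$). The paper's proof instead drags $\rho+1$ times, inducing $A\mapsto AB^{-(\rho+1)s}$, which sends the blocks to $A$ and $AB^{s}$ (see Figure~\ref{PSFFig3ad-4}); it must then also dispose separately of the degenerate case $r=0$, where $R$ would represent the proper power $(AB^{(\rho+1)s})^{b+c}$ with $b+c\ge 2$, contradicting the fact that $H_1(H[R])$ is torsion-free. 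Your argument only covers the subcase $a<b+c$, where $\rho=0$ and one drag coincides with the paper's $\rho+1$ drags.

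Two smaller points. First, your exclusion of $(m',n')=(\pm1,\pm1)$ by hunting for a subword $(AB^{s})^{i}A^{2}B^{s}$ is only an expectation, and it is not justified here: after the change of cutting disks one or two of the new weights may vanish (the paper's Remark notes this explicitly), so the hypothesis $a,b,c>0$ that powered that subword argument in Lemma~\ref{may assume (m,n) ?(1,1)} need not survive the transformation. The paper avoids this entirely: it only shows the new diagram has all $A$-labels nonzero (because $b+c-r>0$ forces an $A$-run of length at least $2$ and every run has length at least $1$), and then, if the new parameters happen to be $(\pm1,\pm1)$, it simply invokes Lemma~\ref{may assume (m,n) ?(1,1)} a second time rather than reproving its content. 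Second, the reduction to ``$m=0$, $n=1$'' should not be described as relabeling the handles --- $\beta$ must stay in the $B$-handle --- but as a symmetry of the $A$-handle exchanging the two unit-labelled bands; the paper's normalization $n=0$, $m=1$ is the mirror of yours and this point is immaterial.
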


\begin{proof}
Suppose one of $m$ and $n$ is $0$. Then without loss of generality,
we may assume that $n=0$, because both cases have R-R diagrams of the same form. Then $m=1$ and the Heegaard diagram of $R$ has a cut-vertex. Note that
$m$ and $n$ are not both $0$, otherwise $R$ represents $B^{ts}$ in $\pi_1(H)$ with $t>1$.
This is impossible since $H_1(H[R])$ is torsion-free.

\begin{figure}[tbp]
\centering
\includegraphics[width = 1\textwidth]{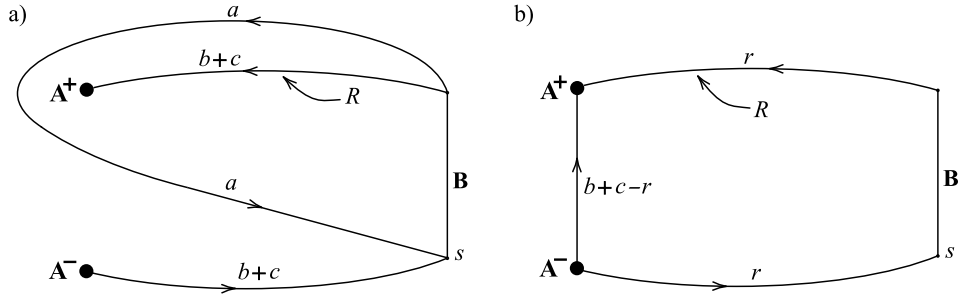}
\caption{The Hybrid diagram and
the hybrid diagram after performing a change of cutting disks inducing an automorphism of $\pi_1(H)$ that takes $A \mapsto AB^{-(\rho+1)s}$. }
\label{PSFFig3ad-4}
\end{figure}

Since the Heegaard diagram of $R$ has a cut-vertex, as did in Lemma~\ref{may assume (m,n) ?(1,1)}, we use the argument of hybrid diagrams. The hybrid diagram of $R$ is illustrated in Figure~\ref{PSFFig3ad-4}a.
Let $a=\rho (b+c)+r$, where $\rho\geq 0$ and $0\leq r<b+c$. If $r=0$, then $a=\rho (b+c)$ and thus it follows from the R-R diagram of $R$
that $R$ represents $(AB^{(\rho+1)s})^{b+c}$ in $\pi_1(H)$, a contradiction. Therefore $r>0$.
Now in the hybrid diagram, we drag $\rho+1$ times the vertex $A^-$
together with edges meeting the vertex $A^{-}$ over the $s$-connection on the $B$-handle. This performance
corresponds to a change of cutting disks inducing an automorphism of $\pi_1(H)$ that takes $A \mapsto AB^{-(\rho+1)s}$.
The resulting hybrid diagram of $R$ is depicted in Figure~\ref{PSFFig3ad-4}b, where there is only one band of connections labeled by $s$ in the $B$-handle.
Also in the $A$-handle, $R$ intersects the cutting disk $D_A$ positively, and since $b+c-r>0$, there must be a band of connections whose label is greater than $1$.
Therefore the corresponding R-R diagram of $R$ has the form of Figure~\ref{DPCFig21as} with $m , n \neq 0$ and $\gcd(m,n) = 1$. If $(m,n)\neq(\pm1,\pm1)$, then by applying
Lemma~\ref{may assume (m,n) ?(1,1)}, we obtain an R-R diagram of $R$ with the form of Figure~\ref{DPCFig21as} with $m , n \neq 0, (m,n)\neq(\pm1,\pm1)$, $\gcd(m,n) = 1$, as desired.
\end{proof}

\begin{rem}
According to Lemmas~\ref{may assume (m,n) ?(1,1)} and \ref{may assume m,n not zero},
the resulting R-R diagram of $R$ obtained by performing a change of cutting disks from
an R-R diagram of $R$ with the form shown in Figure~\ref{DPCFig21as} with $a,b,c>0$, and $|s| > 1$ such that $(m,n)=(\pm1,\pm1)$ with the double signs in same order or either $m=0$ or $n=0$, has the form of Figure~\ref{DPCFig21as} with $m , n \neq 0, (m,n)\neq(\pm1,\pm1)$. We remark that in the resulting R-R diagram of $R$, one or two weights of $a, b,$ and $c$ might be $0$. In that case, as defined in the next section, $R$ becomes a torus or cable knot relator, which means that $H[R]$ is the exterior of a torus knot or a cable knot.
\end{rem}
\smallskip

\subsection{R-R diagrams of the exteriors of torus knots and tunnel-number-one cables of torus knots in $S^3$}\hfill
\label{S: R-R diagrams of torus and cable knots}
\smallskip

Theorem~\ref{at least one of a, b, c is zero} implies that if $R$ has only one band of connections with label greater than 1 on one handle, then
in order for $H[R]$ to embed in $S^3$, $R$ must have at most two bands of connections on the other handle. In this subsection, we discuss this case, i.e., the case where $R$ has an R-R diagram of the form shown in Figure~\ref{DPCFig23as} and determine which values of the parameters give diagrams of embeddings in $S^3$.

\begin{lem}\label{gcd(a,b)=1}
Suppose $R$ is a simple closed curve with an R-R diagram of the form shown in Figure~\emph{\ref{DPCFig23as}}.
If $a$ and $b$ are positive, then $\gcd(a,b) = 1$.
\end{lem}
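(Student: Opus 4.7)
The plan is to prove the contrapositive: if $d := \gcd(a,b) > 1$, then the $1$-complex described by the R-R diagram of Figure~\ref{DPCFig23as} is a disjoint union of $d$ parallel copies of a simple closed curve, contradicting the hypothesis that $R$ is a single simple closed curve. This reduces the lemma to a purely combinatorial component-counting statement about R-R diagrams.

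Concretely, I would first fix labels on the parallel strands in the annular portion of the diagram, partitioning the $a+b$ strands into an $a$-bundle (ending at the first band of connections in the two-band handle) and a $b$-bundle (ending at the second band). Tracing around the $1$-complex induces a permutation $\sigma$ on these $a+b$ strands: one end of each strand is matched by the unique $s$-connection in the single-band handle and the other end is matched by the $m$- or $n$-connection in the two-band handle (according to which bundle it belongs to). The orbits of $\sigma$ are in bijection with the components of the $1$-complex.

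Next I would compute the cycle structure of $\sigma$. The key observation is that the $s$-connection acts as a cyclic shift on the cyclic ordering of the $a+b$ strand endpoints along the belt curve, and the $m$- and $n$-connections pair endpoints within each bundle. Because the two labels $m,n$ satisfy $\gcd(m,n)=1$ (the unimodular determinant condition from Proposition~\ref{R-R labels and unimodular determinant conditions}) and $s$ is a single label, these parameters do not contribute additional common factors to the orbit count. A direct cycle computation, in the spirit of the classical component count for $(p,q)$-torus links, then shows that $\sigma$ has exactly $\gcd(a,b)$ cycles, each of length $(a+b)/\gcd(a,b)$.

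The main obstacle is the bookkeeping in setting up the permutation $\sigma$ cleanly: one has to track carefully how strands from the $a$-bundle and $b$-bundle interleave along each belt curve, how the $s$-connection permutes them cyclically, and how the $m$- and $n$-connections reshuffle them within each bundle. Once this is in place, the conclusion that the number of components equals $\gcd(a,b)$ is a short combinatorial argument, and the lemma follows immediately since a single simple closed curve $R$ forces $\gcd(a,b)=1$.
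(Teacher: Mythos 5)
Your strategy is sound and would prove the lemma, but it is genuinely different from, and considerably heavier than, the paper's argument. The paper observes that the curves $M$ and $\beta$ in Figure~\ref{DPCFig23as} meet transversely in a single point, so the complement of a neighborhood of $M\cup\beta$ in the genus two surface is a once-punctured torus $F$ containing $R$; in $H_1(F)\cong\mathbb{Z}\oplus\mathbb{Z}$ the class of $R$ is $a e_1+b e_2$ for a suitable basis, and the classical fact that a nonzero class in a (punctured) torus is represented by a simple closed curve if and only if it is primitive gives $\gcd(a,b)=1$ in one line. Your permutation-cycle count is precisely the combinatorial shadow of that fact: the number of orbits of your $\sigma$ equals the number of components of the $(a,b)$ curve system in $F$, which is $\gcd(a,b)$. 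What your approach buys is an elementary, self-contained verification that avoids quoting the homological characterization; what it costs is that the entire content of the proof --- the cycle computation --- is left as an assertion, and that bookkeeping is exactly the part prone to error. Two smaller points: (i) the appeal to $\gcd(m,n)=1$ is a red herring, since the labels $m$, $n$, $s$ of the bands only record intersection numbers with the cutting disks and have no effect on how strand endpoints are matched, hence none on the orbit count, which depends only on the weights $a$, $b$ and the planar pattern of the diagram; (ii) if you do carry out the count, it is cleaner to work inside the once-punctured torus $F$ disjoint from $M\cup\beta$ (as the paper implicitly does), where the strands form the standard $(a,b)$ system on a torus and the component count is literally the torus-link computation you cite.
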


\begin{proof}
Observe that the simple closed curves $M$ and $\beta$ in Figure~\ref{DPCFig23as}, intersect transversely at a single point. Then $R$ lies in a once-punctured torus $F$ in $\Sigma$, which is disjoint from $M \cup \beta$. Since $R$ represents a nontrivial homology class in the integral homology group $H_1(F)$, the result follows from the well-known characterization of the homology classes in $H_1(F)$ which are represented by simple closed curves.
\end{proof}

\begin{figure}[tbp]
\centering
\includegraphics[width = 0.6\textwidth]{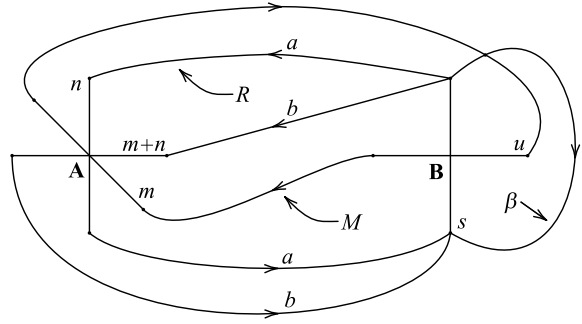}
\caption{An R-R diagram of $R$ where $R$ has only one band of connections with the label greater than 1 on one handle and has at most two bands of connections on the other handle.}
\label{DPCFig23as}
\end{figure}

We first consider a positive Heegaard diagram of $R$ with no cut-vertex which underlies the R-R diagram in Figure~\ref{DPCFig23as} and
figure out which conditions on the parameters in the R-R diagram of $R$ are required for $H[R]$ to embed in $S^3$.

\begin{lem}
\label{diagrams of curves that embed when c = 0}
Suppose $R$ is a simple closed curve with an R-R diagram of the form shown in Figure~\emph{\ref{DPCFig23as}} with $m, n > 0$, $\gcd(m,n) = 1$, and $s > 1$. In addition, suppose that if $a+b = 1$, then $(a, b)$ = $(1, 0)$ and $n > 1$.

If $H[R]$ embeds in $S^3$, then the curve $M$ in Figure~\emph{\ref{DPCFig23as}} represents the meridian of $H[R]$, $0 < u < s$, $\gcd(s,u) = 1$, and there exists $\delta = \pm 1$ such that one of the following three sets of conditions is satisfied:
\begin{enumerate}
  \item $a + b = 1$ and, \emph{(}assuming without loss that $(a,b)$ = $(1,0)$\emph{)}, $sm - un = \delta$.\\
  \item $a,b > 0$, $m = 1$, and $s(a+b) - u[(a+b)n +b] = \delta$.\\
  \item $a,b > 0$, $n> m > 1$, $u = 1$, and $ms(a+b) - [(a+b)n +bm] = \delta$.\\
\end{enumerate}
Conversely, if one of these three sets of conditions is satisfied, then $H[M,R]$ is $S^3$.
\end{lem}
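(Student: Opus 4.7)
The plan is to apply Theorem~\ref{waves provide meridians} to produce two candidate meridian representatives of $H[R]$, use Proposition~\ref{Locate horizontal waves} and the Culling Lemma (Lemma~\ref{culling lemma}) to identify the correct one with the curve $M$ drawn in Figure~\ref{DPCFig23as}, and finally derive the arithmetic conditions (1)--(3) as the homological requirement that $H[M, R] \cong S^3$. The converse then follows from primitivity of $M$ in $H$.

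First I would verify that under the hypotheses the underlying Heegaard diagram $\mathbb{D}_R$ is positive, connected, and has no cut-vertex: positivity is immediate from $m, n, s > 0$, and the exceptional clause ``if $a + b = 1$ then $(a, b) = (1, 0)$ and $n > 1$'' is precisely what rules out a cut-vertex in the otherwise degenerate case. Theorem~\ref{waves provide meridians} then yields a distinguished horizontal wave $\omega_h$ based at $R$, and Proposition~\ref{Locate horizontal waves} locates its endpoints at the connections bordering the maximal-label bands in each handle. Surgery on $R$ along $\omega_h$ produces two meridian candidates $M_1, M_2$ of $H[R]$, which I would compute explicitly as words in $A, B$ in each of the three sub-cases; the conditions $0 < u < s$ and $\gcd(s, u) = 1$ would emerge as exactly what is needed for the resulting candidate to be an embedded simple closed curve.

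Next I would identify $M$ with one of the $M_i$. In Cases (1) and (2) a direct comparison with the formulas for $M_1, M_2$ suffices. In Case (3), where both candidates are nontrivial, the Culling Lemma applied with $\alpha = \beta$ eliminates the candidate whose own distinguished horizontal wave meets $\beta$ in a single transverse point, leaving only the candidate compatible with $M$ and simultaneously forcing $u = 1$. The sub-case split here parallels the four scenarios analysed in Figure~\ref{DPCFig9h}, and the main obstacle I expect is the Case (3) bookkeeping and the verification that the surviving meridian candidate coincides with $M$ up to isotopy in $\partial H \setminus R$.

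Once $M$ is identified as the meridian, the requirement $H[M, R] \cong S^3$ reduces to the condition that the determinant of the $2 \times 2$ integer matrix expressing the pair $([\vec R], [\vec M])$ in the basis $([A], [B])$ of $H_1(H)$ has absolute value $1$. Reading coordinates off the R-R diagram and expanding this determinant yields precisely the stated equations $sm - un = \delta$ in Case (1), $s(a+b) - u[(a+b)n + b] = \delta$ in Case (2), and $ms(a+b) - [(a+b)n + bm] = \delta$ in Case (3). For the converse, when one of (1)--(3) holds, the curve $M$ is manifestly primitive in $H$ from its R-R picture, so $H[M]$ is a solid torus, and the determinant condition is exactly the statement that $R$ bounds a meridian disk of this solid torus; hence $H[M, R] \cong S^3$.
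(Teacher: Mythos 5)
Your outline goes wrong at the step where you claim that, once $M$ is identified as the meridian, ``$H[M,R]\cong S^3$ reduces to the condition that the determinant of the matrix expressing $[R]$ and $[M]$ in $H_1(H)$ has absolute value $1$.'' That reduction is valid only when one of the two attaching curves is primitive in $H$, so that $H[M]$ is a solid torus and $H[M,R]$ is a lens space. But the meridian read off Figure~\ref{DPCFig23as} is $M=A^mB^u$, and nothing in the hypotheses prevents $m>1$ and $u>1$ simultaneously; in that case $M$ is a Seifert fiber relator, $H[M]$ is Seifert-fibered over the disk with exceptional fibers of indexes $m$ and $u$, and a unit determinant only makes $H[M,R]$ a homology sphere (possibly a Seifert-fibered homology sphere with three exceptional fibers), not $S^3$. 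This is exactly the content you are missing: the paper's case analysis is on the values of $m$ and $u$, and in the subcase $m,u>1$ it observes that $R$ meets a regular fiber of $H[M]$ exactly $a+b$ times, so $H[R,M]$ is Seifert-fibered over $S^2$ with three exceptional fibers---hence not $S^3$---unless $a+b=1$. Without this Seifert-fibered obstruction you cannot exclude the case $a,b>0$ with $m,u>1$, which is the essential ``only if'' content of the lemma; and your converse argument (``$M$ is manifestly primitive'') fails in Case (1), where $m$ and $u$ may both exceed $1$ and the converse instead rests on $R$ meeting the regular fiber of $H[M]$ once.

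A secondary problem is the role you assign to the Culling Lemma. In the paper's proof of this lemma it is not used at all; $u=1$ is not ``forced'' in Case (3)---rather, Case (3) is by definition the residual subcase $m>1$, $u=1$ after the subcases $m,u>1$ and $m=1$ have been treated, and the inequality $n>m$ is then derived arithmetically from $ms(a+b)-[(a+b)n+bm]=\delta$ together with $m,s>1$. The identification of $M$ is also more direct than you suggest: the curve $M$ drawn in Figure~\ref{DPCFig23as} is itself obtained by surgery on $R$ along the horizontal wave, so $M=A^mB^u$ with $0<u<s$ and $\gcd(s,u)=1$ is read off the diagram rather than extracted from a comparison of two competing candidates.
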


\begin{proof}
The Heegaard diagram of $R$ underlying the R-R diagram in Figure~\ref{DPCFig23as} is positive, connected, and has no cut-vertex. It follows from Theorem~\ref{waves provide meridians} that if $H[R]$ embeds in $S^3$, then a representative of the meridian of $H[R]$ can be obtained by surgery on $R$ along a horizontal wave $\omega_h$. The curve $M$ in Figure~\ref{DPCFig23as} is obtained by such a
surgery on $R$, and so, if $H[R]$ embeds in $S^3$, $M$ represents the meridian of $H[R]$. From Figure~\ref{DPCFig23as}, we see that $M = A^mB^u$ in $\pi_1(H)$ with $0 < u < s$ and $\gcd(s,u) = 1$. Next, consideration of the possible values of $m$ and $u$ leads to the three cases of the lemma.

First, suppose $m,u > 1$. Then by Lemma 2.2 of \cite{D03} or Theorem 4.2 of \cite{K20}, $M$ is a Seifert fiber relator, i.e., $H[M]$ is Seifert-fibered with two exceptional fibers of indexes $m$ and $u$. In addition, Figure~\ref{DPCFig23as} shows $R$ is a simple closed curve in $\partial H[M]$, which intersects a regular fiber of $H[M]$ $a+b$ times. This implies $H[R,M]$ will be Seifert-fibered with three exceptional fibers, and not homeomorphic to $S^3$, unless $a+b = 1$.
When $a+b = 1$, we may assume $a = 1$, $b = 0$ and $R = A^nB^s$. It follows that $H[M,R]$ will be $S^3$ if and only if $sm - un = \delta$.
This gives case (1) of the lemma. (Assuming $n > 1$ in this case avoids the trivial situation in which $H[R]$ is a solid torus.)

Next, suppose $m = 1$. Then $M$ is primitive in $H$ and $H[R,M]$ will be $S^3$ if and only if $H[R,M]$ is an integral homology sphere. Clearly, this happens if and only if $s(a+b) - u[(a+b)n +b] = \delta$, which gives case (2) of the lemma.

Finally, suppose $m > 1$ and $u = 1$. Then, again, $M$ is primitive in $H$ and $H[M,R]$ will be $S^3$ if and only if $H[M,R]$ is an integral homology sphere. Examination of Figure~\ref{DPCFig23as} shows this happens if and only if $ms(a+b) - [(a+b)n +bm] = \delta$.  This gives case (3) of the lemma except for the condition that $n > m$.
However, the equation $ms(a+b) - [(a+b)n +bm] = \delta$, where $\delta=\pm1$ implies that $(n-m)(a+b)=m(s-1)(a+b)-bm-\delta$.
Since $m, s>1$, the right-hand side is positive. Therefore $n >m$ as desired.
\end{proof}

\begin{figure}[tbp]
\centering
\includegraphics[width = 1\textwidth]{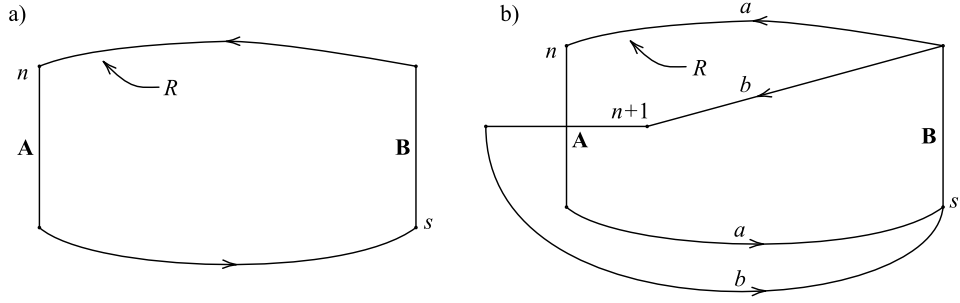}
\caption{Suppose $R$ is a simple closed curve on the boundary of a genus two handlebody $H$ such that $R$ has an R-R diagram
as in this figure, with $n, s>1$, $a,b>0$, $\gcd(a,b)=1$. Then $H[R]$ is the exterior of an $(n, s)$ torus knot (Figure~\ref{PSFFig3aa}a) or
an $((a+b)n+b, s)$ torus knot (Figure~\ref{PSFFig3aa}b).}
\label{PSFFig3aa}
\end{figure}

To prove the next proposition, we need the following notations and lemma.

\noindent\textit{Notation}. If $U = (a,b)$ is an element of $\mathbb{Z} \oplus \mathbb{Z}$, let $U^{\perp}$ denote the element $(-b,a)$ of $\mathbb{Z} \oplus \mathbb{Z}$, and let
 `$\circ$' denote the usual \emph{inner product} or \emph{dot product} of vectors.

\begin{lem}
\label{dot product}
Let $U = (a,b)$, $V = (c,d)$ and $W = (e,f)$ be three elements of $\mathbb{Z} \oplus \mathbb{Z}$ such that
$ad-bc = \pm1$. If $W$ is expressed as a linear combination of $U$ and $V$, say $W = xU + yV$, then $y = \pm (U^{\perp} \circ W)$.
\end{lem}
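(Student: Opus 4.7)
The plan is to exploit the fact that $U^{\perp}$ is orthogonal to $U$ with respect to the standard inner product on $\mathbb{Z}\oplus\mathbb{Z}$, so that dotting both sides of $W=xU+yV$ with $U^{\perp}$ kills the $x$-term and isolates $y$ up to the scalar factor $U^{\perp}\circ V$. The hypothesis $ad-bc=\pm1$ is precisely what guarantees that this scalar factor is $\pm1$, which in turn makes the projection formula recover $y$ exactly rather than up to an integer denominator.

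Concretely, I would first record the two identities $U^{\perp}\circ U=-ba+ab=0$ and $U^{\perp}\circ V=-bc+ad=\pm1$, the second being just a restatement of the determinant hypothesis. Then, applying $U^{\perp}\circ(\,\cdot\,)$ to the decomposition $W=xU+yV$ and using bilinearity, I would get
\[
U^{\perp}\circ W \;=\; x\,(U^{\perp}\circ U)\;+\;y\,(U^{\perp}\circ V)\;=\;0+y(\pm1)\;=\;\pm y,
\]
from which the conclusion $y=\pm(U^{\perp}\circ W)$ follows immediately.

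There is really no obstacle in this argument: it is a one-line computation once one notices the orthogonality of $U^{\perp}$ to $U$. The only point worth flagging is that the sign $\pm$ in the conclusion is the same sign as $ad-bc$, so when this lemma is invoked later one must track that sign (for instance, to match it with the $\delta=\pm1$ parameters appearing in Theorem~\ref{main theorem 1} and Lemma~\ref{diagrams of curves that embed when c = 0}). In particular, if one orients $U$ and $V$ so that $ad-bc=+1$, then the formula becomes simply $y=U^{\perp}\circ W$ with no sign ambiguity.
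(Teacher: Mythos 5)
Your proposal is correct and coincides with the paper's own proof: both apply $U^{\perp}\circ(\,\cdot\,)$ to the decomposition $W=xU+yV$ and use $U^{\perp}\circ U=0$ together with $U^{\perp}\circ V=ad-bc=\pm1$. Your added remark about tracking the sign of $ad-bc$ in later applications is a reasonable clarification but does not change the argument.
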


\begin{proof}
We take an inner product by $U^{\perp}$ on both sides of $W = xU + yV$. Then since $U^{\perp}\circ U=0$ and $U^{\perp}\circ V=\pm1$, the result follows.
\end{proof}

\begin{prop}\label{torus or cable knot exterior}
Suppose the hypothesis of Lemma~\emph{\ref{diagrams of curves that embed when c = 0}}.
Then the curve $\beta$ in Figure~\emph{\ref{DPCFig23as}} intersects the meridian of $H[R]$ once and two parallel copies of the curve $\beta$ bound a separating essential annulus $\mathcal{A}$ in $H[R]$, and:
\begin{enumerate}
\item If Case \emph{(1)} of Lemma~\emph{\ref{diagrams of curves that embed when c = 0}} holds, then  $H[R]$ is the exterior of an $(n,s)$ torus knot in $S^3$, and $\beta$ represents a regular fiber in the Seifert fibration of $H[R]$.

\item If Case \emph{(2)} of Lemma~\emph{\ref{diagrams of curves that embed when c = 0}} holds, then $H[R]$ is the exterior of an \\$((a+b)n +bm,s)$ torus knot in $S^3$, and $\beta$ represents a regular fiber in the Seifert fibration of $H[R]$.

\item If Case \emph{(3)} of Lemma~\emph{\ref{diagrams of curves that embed when c = 0}} holds, then  $H[R]$ is the exterior of an \\$(ms(a+b) \pm1,s)$ cable about an $(a+b,m)$ torus knot in $S^3$ and $\mathcal{A}$ is the cabling annulus in $H[R]$.
\end{enumerate}
\end{prop}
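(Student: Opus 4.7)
The plan is to verify the intersection claim, then produce the annulus $\mathcal{A}$ geometrically, and finally identify $H[R]$ in each of the three cases.

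For the intersection claim, a direct look at Figure~\ref{DPCFig23as} shows that in the $B$-handle both $\beta$ and $M$ consist of a single connection, and these two connections are nonparallel with slope labels satisfying a unimodular determinant relation by Proposition~\ref{R-R labels and unimodular determinant conditions}. Hence $\beta$ crosses $M$ transversely in exactly one point inside the $B$-handle. Since Lemma~\ref{diagrams of curves that embed when c = 0} identifies $M$ as the meridian of $H[R]$ and $\beta \subset \partial H \setminus R \subset \partial H[R]$, we conclude that $\beta$ meets the meridian of $H[R]$ in exactly one point.

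To produce $\mathcal{A}$, I exploit that $\beta$ is a proper power curve: there is an essential separating disk $D \subset H$ disjoint from $\beta$, splitting $H$ into solid tori $V_1 \supset \beta$ and $V_2$. On $\partial V_1$, because $\beta$ represents $B^s$ with the core of $V_1$ representing $B$, the curve $\beta$ is an $(s,k)$-torus curve for some $k$ with $\gcd(s,k) = 1$, so two parallel copies of $\beta$ on $\partial V_1$ cobound an essential annulus $\mathcal{A}$ in $V_1$ --- namely the vertical annulus in the Seifert fibration of $V_1$ in which $\beta$ is a regular fiber. As $R$ is disjoint from $\beta$, we have $\mathcal{A} \subset V_1 \subset H \subset H[R]$ with $\partial \mathcal{A} \subset \partial H[R]$. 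That $\mathcal{A}$ separates $H[R]$ follows from the fact that $R$ must lie in the larger of the two components of $\partial H \setminus \partial \mathcal{A}$ (the smaller is the trivial annulus between the two parallel copies of $\beta$), so the 2-handle attaches to only one side; essentiality of $\mathcal{A}$ in $H[R]$ will be automatic once $\mathcal{A}$ is recognized in each case below as the canonical annulus of a torus-knot exterior or as a cabling annulus.

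Case (1) is direct: $(a,b) = (1,0)$, so $R = A^n B^s$ is the standard torus-knot relator, whence by the well-known embedding of torus-knot exteriors in genus-two handlebody quotients (cf.\ Dean \cite{D03}) $H[R] \cong E(T_{n,s})$, the Seifert fibration has exceptional fibers $A$ and $B$ of orders $n, s$, a regular fiber is represented by $A^n = B^{-s}$, and $\beta = B^s$ is a regular fiber; correspondingly, $\mathcal{A}$ is the vertical annulus separating the neighborhoods of the two exceptional fibers. In Case (2), $m = 1$ permits a change of cutting disks (implemented via a hybrid diagram as in Section~\ref{hybrid diagrams}) that merges the two bands of $R$ in the $A$-handle into a single band of label $(a+b)n + bm$, producing an R-R diagram of the form in Case (1) with parameters $((a+b)n + bm, s)$; Case (1) then supplies the conclusion. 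In Case (3), $u = 1$ makes $M$ primitive in $H$, and a parallel change-of-disks analysis exhibits $H[R]$ as the union, along $\mathcal{A}$, of a Seifert-fibered cable space (formed from $V_1$ together with the portion of the 2-handle sitting on its side of $D$) and the exterior of $T_{a+b, m}$ (formed from $V_2$ together with the remainder of the 2-handle, via Case (2) applied in the dual cutting-disk basis); the cabling slope $ms(a+b) \pm 1$ is then forced by the determinant identity $ms(a+b) - [(a+b)n + bm] = \delta$, identifying $H[R]$ as the claimed cable exterior and $\mathcal{A}$ as the cabling annulus.

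The main obstacle is Case (3): the qualitative decomposition of $H[R]$ into a companion exterior and a cable space along $\mathcal{A}$ is clean, but pinning down the companion as precisely $T_{a+b, m}$ and the cabling parameter as precisely $ms(a+b) \pm 1$ requires careful tracking of how the 2-handle attached along $R$ distributes between the two pieces after the change of cutting disks, together with a slope comparison on $\partial V_1$ and $\partial V_2$ controlled by the determinant identity.
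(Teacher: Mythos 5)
Your treatment of the intersection claim and of the construction of $\mathcal{A}$ inside the solid torus containing $\beta$ is fine and matches what the paper leaves implicit. The problems are in Cases (2) and (3). For Case (2), you claim that because $m=1$ a change of cutting disks merges the two bands in the $A$-handle into a single band of label $(a+b)n+bm$, reducing to the rectangular form of Case (1). This cannot work in general: a change of cutting disks preserves the pair $(H,R)$ up to homeomorphism, so your reduction would make every non-rectangular torus knot relator with $m=1$ equivalent to a rectangular one. That contradicts the distinction the paper itself draws in Proposition~\ref{the converse true for torus knots}, where the rectangular and non-rectangular diagrams realize \emph{different} genus two Heegaard decompositions (equivalently, inequivalent unknotting tunnels, by \cite{BRZ88}); concretely, the non-rectangular word has strictly greater length than $A^{(a+b)n+b}B^{s}$, and for generic parameters it is already Whitehead-minimal, so no automorphism of $\pi_1(H)$ induced by a homeomorphism carries one to the other (\cite{W36}). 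The paper avoids this entirely: it applies Lemma 2.2 of \cite{D03} (or Theorem 4.2 of \cite{K20}) \emph{directly} to the two-band diagram to see that $H[R]$ is Seifert-fibered over $D^2$ with exceptional fibers of indexes $(a+b)n+b$ and $s$ and that $\beta$ is a regular fiber, and then invokes \cite{MOS71} to conclude $H[R]$ is the stated torus knot exterior. You should replace your reduction by this direct argument.

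For Case (3), your decomposition is misstated and the two essential computations are missing. Since $R$ is disjoint from $\beta$ (but \emph{not} from the separating disk $D$), the 2-handle does not ``distribute'' between the two sides of $D$; the correct move is to cut $H$ along $\mathcal{A}$ itself, obtaining a solid torus $V$ (a neighborhood of the core of the $B$-side solid torus, around which $\mathcal{A}$ winds $s$ times) and a genus two handlebody $W$ containing all of $R$, so that $H[R]=V\cup_{\mathcal{A}}W[R]$ with the entire 2-handle on the $W$ side. One must then prove two things you only gesture at: first, that $W[R]$ is the exterior of the $(a+b,m)$ torus knot --- the paper does this by writing down the R-R diagram of $R$ with respect to the cutting disks $\{D_A,D_C\}$ of $W$, where $D_C$ is an outermost subdisk of $D_B$ cut off by $\mathcal{A}$ (this diagram is the one of Figure~\ref{PSFFig3ab} with $s$ set equal to $1$), and then performing the change of cutting disks $C\mapsto CA^{-n}$ to reach the non-rectangular torus-knot form; second, that the cabling slope is $(a+b)n+bm=ms(a+b)+\delta$, which the paper obtains from the homology computation $\Delta(\beta,l)=\left|[\vec{R}]^{\perp}\circ[\vec{\beta}]\right|=(a+b)n+bm$ using Lemma~\ref{dot product}, together with $\Delta(\beta,\partial D_B)=s$. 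As written, your proposal acknowledges these gaps rather than closing them, and the decomposition you propose to start from is not the one along which the argument can be run.
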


\begin{proof}
It is obvious from Figure~\ref{DPCFig23as} that the curve $\beta$ intersects the meridian of $H[R]$ once.

The diagrams of $R$ in Cases $(1)$ and $(2)$ are shown in Figures~\ref{PSFFig3aa}a and \ref{PSFFig3aa}b respectively.
Two parallel copies of the curve $\beta$ bound a separating essential annulus $\mathcal{A}$ in $H$ and thus in $H[R]$.
By Lemma 2.2 of \cite{D03} or Theorem 4.2 of \cite{K20}, $H[R]$ is a Seifert-fibered spaces over $D^2$ with two exceptional fibers of indexes $n$ and $s$ in
Figure~\ref{PSFFig3aa}a, and $(a+b)n+b$ and $s$ in Figure~\ref{PSFFig3aa}b, and $\beta$ represents a regular fiber in the Seifert fibration of $H[R]$. However, $H[R]$ is the exterior of some knot in $S^3$. It is known that torus knot exteriors are Seifert-fibered, and by \cite{MOS71}, are the only knot exteriors in $S^3$ which are Seifert-fibered. Therefore, $H[R]$ is the exterior of a torus knot, i.e., an $(n, s)$ torus knot (Figure~\ref{PSFFig3aa}a) or an $((a+b)n+b, s)$ torus knot (Figure~\ref{PSFFig3aa}b).

\begin{figure}[tbp]
\centering
\includegraphics[width = 0.6\textwidth]{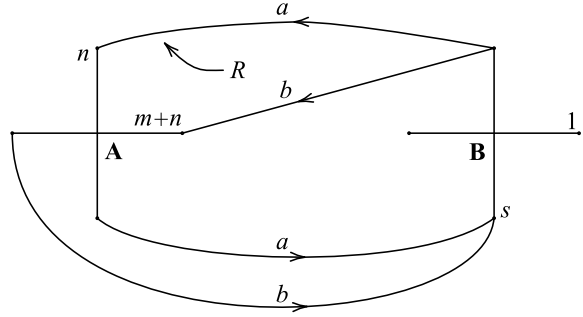}
\caption{Suppose $R$ is simple closed curve on the boundary of a genus two handlebody $H$ such that $R$ has an R-R diagram as in this figure, with $m, n, s > 1$, $a, b > 0$, $\gcd(a,b)$ = $\gcd(m,n) = 1$, and $n(a+b) + bm$ = $sm(a+b) + \delta$ with $\delta = \pm 1$. Then $H[R]$ is the exterior of an $(ms(a+b) \pm1,s)$ cable about an $(a+b,m)$ torus knot.}
\label{PSFFig3ab}
\end{figure}

Assume that Case $(3)$ holds. The diagram of $R$ in this case is given in Figure~\ref{PSFFig3ab}. As in Cases $(1)$ and $(2)$, two parallel copies of the curve $\beta$ bound a separating essential annulus $\mathcal{A}$ in $H$ and thus in $H[R]$.

Let $\mathcal{D}$ denote the R-R diagram of Figure~\ref{PSFFig3ab}. Suppose that $D_A$ and $D_B$ are cutting disks of $H$ underlying the A-handle and B-handle respectively of $\mathcal{D}$.

Cutting $H$ open along $\mathcal{A}$ yields a solid torus $V$ and a genus two handlebody $W$. Observe that $R$ lies in $\partial W$, and $W$ has cutting disks $D_A$ and $D_C$, where $D_C$ is one of the outermost subdisks that $\mathcal{A}$ cuts out of $D_B$.

\begin{claim}\label{$W[R]$ is the exterior of a $(m, a+b)$ torus knot}
$W[R]$ is the exterior of an $(m, a+b)$ torus knot.
\end{claim}
\begin{proof}
Consider the Heegaard diagram of $R$ with respect to $\{D_A, D_C\}$ of $W$. Since $R$ intersects $D_C$ once before and after intersecting $D_A$, $R$ has an R-R diagram as shown in Figure~\ref{PSFFig3ac}a which the Heegaard diagram underlies. Note that the diagram of $R$ in Figure~\ref{PSFFig3ac}a can be obtained from the diagram of $R$ in Figure~\ref{PSFFig3ab} with $s$ set equal to $1$.

Let $A$ and $C$ denote generators of $\pi_1(W)$ dual to the cutting disks $D_A$ and $D_C$ respectively of $W$. For the weights $a$ and $b$ in the R-R diagram of $R$, since gcd$(a,b)=1$, we can let $a=\rho b +r$, where $\rho\geq 0$ and $0\leq r<b$ (if $r=0$, then $\rho>0$ and $b=1$).

Now we record $R$ by starting the $b$ parallel arcs entering into the $(m+n)$-connection in the $A$-handle. It follows from the R-R diagram
that $R$ is the product of two subwords $A^{m+n}(CA^{n})^\rho C$ and $A^{m+n}(CA^{n})^{\rho+1}C$ with $|A^{m+n}(CA^{n})^\rho C|=b-r$ and $|A^{m+n}(CA^{n})^{\rho+1}C|=r$. There is a change of cutting disks of the handlebody $W$ underlying the diagram, which induces an automorphism of $\pi_1(H)$ that takes $C \mapsto CA^{-n}$ and leaves $A$ fixed. Then by this change of cutting disks, $A^{m+n}(CA^{n})^\rho C$ and $A^{m+n}(CA^{n})^{\rho+1}C$ are sent to $A^mC^{\rho+1}$ and $A^mC^{\rho+2}$. Therefore the resulting Heegaard diagram of $R$ realizes a new R-R diagram of the form in Figure~\ref{PSFFig3ac}b, where the positions of the $A$- and $C$-handles are switched. The R-R diagram of $R$ in Figure~\ref{PSFFig3ac}b has the same form as in Figure~\ref{PSFFig3aa}b implying that $W[R]$ is the exterior of an $(a+b, m)$ torus knot in $S^3$. This completes the proof of the claim.
\end{proof}

\begin{figure}[tbp]
\centering
\includegraphics[width = 1\textwidth]{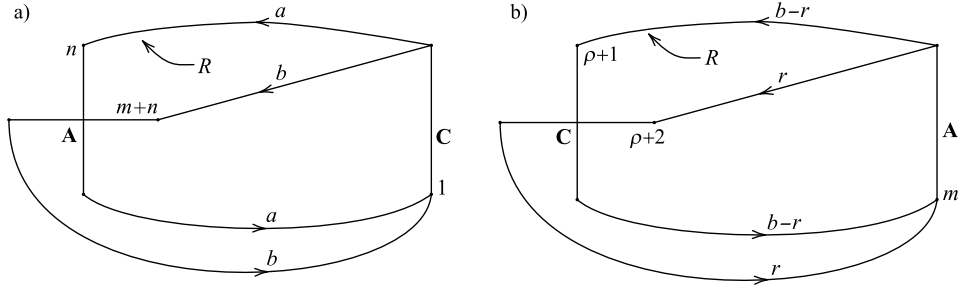}
\caption{An R-R diagram of $R$ with $s=1$ and new R-R diagram of $R$ after the change of cutting disks.}
\label{PSFFig3ac}
\end{figure}

By the claim, $W[R]$ is the exterior of an $(a+b, m)$ torus knot in $S^3$, therefore $H[R]$ is a cable about an $(a+b, m)$ torus knot in $S^3$ with
$\mathcal{A}$ the cabling annulus in $H[R]$.

It remains to compute the cabling coordinates. Let $l$ be the longitude of $W[R]$. Then, since $\beta$ is a boundary component of the cabling annulus $\mathcal{A}$, the cabling coordinates are given, up to sign, by $\Delta(\beta,l)$ and $\Delta(\beta,\partial D_B)$. It is obvious that $\Delta(\beta,\partial D_B) = s$.

\begin{claim}\label{cabling coordinates}
$\Delta(\beta,l)=(a+b)n+bm$.
\end{claim}

\begin{proof}
Let $M$ be a meridian of $W[R]$. Then $H_1(W[R])(=\mathbb{Z})$ is generated by $[M]$ and $[\beta]=\Delta(\beta,l)[M]$ in $H_1(W[R])$. Since $W[R,M]\cong S^3$, $\{[R], [M]\}$ is a basis of $H_1(W)=\mathbb{Z}\oplus\mathbb{Z}$. If we let $[\beta]=x[R]+y[M]$ in $H_1(W)$, then since $W[R]$ is obtained from $W$ by gluing a 2-handle along $R$, $y=\Delta(\beta,l)$.
In order to compute $y$, we apply for Lemma~\ref{dot product}. It follows from Figure~\ref{PSFFig3ac}a that $[\vec{R}] = ((a+b)n+bm, a+b)$ and $[\vec{\beta}] = (0,1)$ in $H_1(W)$. Then Lemma~\ref{dot product} implies
$$y = | [\vec{R}]^{\perp}  \circ [\vec{\beta}] | = |((a+b)n+bm, a+b)^{\perp} \circ (0,1)| = (a+b)n+bm.$$
This completes the claim.
\end{proof}

Therefore, $H[R]$ is an $((a+b)n+bm, s)$ cable about an $(a+b, m)$ torus knot. Since $ms(a+b) - [(a+b)n +bm] = \pm1$,  $H[R]$ is an $(ms(a+b)\pm1, s)$ cable about an $(a+b, m)$ torus knot, as desired.
\end{proof}

We note that in Cases (1), (2), and (3), the curve $R$ is the boundary of the co-core of a 1-handle regular neighborhood of some unknotting tunnel
of a torus knot for (1) and (2) and of a cable of a torus knot for (3).
However since there are the classifications of unknotting tunnels
of torus knots and cables of torus knots up to homeomorphism in \cite{BRZ88}
and \cite{MS91} respectively, the converse of Proposition~\ref{torus or cable knot exterior} is also true as the following propositions verify.

\begin{prop}\label{the converse true for torus knots}
Suppose $R$ is a simple closed curve on the boundary of a genus two handlebody $H$
such that $H[R]$ is homeomorphic to the exterior of a torus knot in $S^3$. Then
$R$ has an R-R diagram with the form in Figures~\emph{\ref{PSFFig3aa}a} or \emph{\ref{PSFFig3aa}b} with the parameters satisfying the conditions
of Cases $(1)$ and $(2)$ respectively in Lemma~\emph{\ref{diagrams of curves that embed when c = 0}}.
\end{prop}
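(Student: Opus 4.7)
The plan is to run the argument in reverse: start from the hypothesis that $H[R]$ is a torus knot exterior, invoke the Boileau--Rost--Zieschang classification of unknotting tunnels of torus knots to pin down the topological type of $R$, and then convert each resulting tunnel presentation into an R-R diagram whose parameters match Case (1) or Case (2) of Lemma~\ref{diagrams of curves that embed when c = 0}.

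First I would observe that, since $R$ is a nonseparating simple closed curve on $\partial H$ and $H[R]$ is a knot exterior in $S^3$, the cocore $\tau$ of the 2-handle attached along $R$ is an unknotting tunnel for the torus knot $k$ whose exterior is $H[R]$, and $R$ is isotopic in $\partial H$ to the boundary of a cocore of the 1-handle regular neighborhood of $\tau$. Conversely, any choice of unknotting tunnel $\tau$ of $k$ determines, up to homeomorphism, an isotopy class of such curves $R$ on $\partial H$. So specifying $R$ up to handlebody-homeomorphism is the same as specifying an unknotting tunnel of $k$ up to homeomorphism of the pair $(S^3,k)$.

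Next I would quote the Boileau--Rost--Zieschang classification \cite{BRZ88}: a nontrivial $(p,q)$ torus knot has, up to homeomorphism, only finitely many unknotting tunnels; each is either one of the two ``upper/lower'' tunnels lying in the standard torus, or one of the ``middle'' tunnels obtained from a continued fraction expansion of $q/p$. For each tunnel on this short list, I would produce a standard handlebody presentation: take a genus two Heegaard surface $\Sigma$ of $S^3$ containing $k$ together with $\tau$, cut out a regular neighborhood to obtain $H$, and read off the curve $R$ on $\partial H$. In each case I would then construct an explicit R-R diagram of $R$ with one band of connections in one handle and at most two bands in the other, verifying that it agrees with Figure~\ref{PSFFig3aa}a (for the upper/lower tunnels, giving Case (1) with parameters $(n,s)=(p,q)$) or with Figure~\ref{PSFFig3aa}b (for the middle tunnels, giving Case (2) with $(a+b)n+bm$ and $s$ equal to the pair $(p,q)$ determined by the continued fraction data).

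The main obstacle I expect is the bookkeeping of the ``middle'' tunnels: one must identify the weights $a,b,m,n,s$ in Figure~\ref{PSFFig3aa}b with the continued fraction data of $q/p$, and then check that the arithmetic condition of Case (2), namely $s(a+b)-u[(a+b)n+b]=\delta$ with $\delta=\pm1$ for some $0<u<s$, is automatically satisfied because the torus knot exterior $H[R]$ has $H_1$ infinite cyclic (so the relevant determinant is $\pm1$). Once this identification is made, Lemma~\ref{diagrams of curves that embed when c = 0} together with Proposition~\ref{torus or cable knot exterior} forces the torus knot type of $H[R]$ to agree with the one predicted by the R-R parameters, closing the converse direction.
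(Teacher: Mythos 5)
Your plan is essentially the paper's own argument: both reduce to the Boileau--Rost--Zieschang classification \cite{BRZ88}, which gives exactly three genus-two Heegaard decompositions (equivalently, unknotting tunnels up to homeomorphism) of a torus knot exterior, one yielding the rectangular diagram of Figure~\ref{PSFFig3aa}a and the other two, being symmetric to each other, yielding Figure~\ref{PSFFig3aa}b, after which the parameter conditions follow from Lemma~\ref{diagrams of curves that embed when c = 0} exactly as you say. The only real difference is that the paper outsources the tunnel-by-tunnel conversion into explicit R-R diagrams to Theorem 3.2 of \cite{K20} (the classification of R-R diagrams of curves $R$ with $H[R]$ Seifert-fibered over the disk with two exceptional fibers), rather than carrying out the bookkeeping you outline (where, incidentally, it is the tunnel lying in the Heegaard torus that produces the rectangular diagram, not the upper/lower core tunnels).
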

\begin{proof}
This follows from Theorem 3.2 of \cite{K20}, which provides a classification of R-R diagrams of $R$ such that $H[R]$ is a Seifert-fibered space over the disk with two exceptional fibers.

The idea of its proof is that since $H[R]$ is homeomorphic to the exterior of a torus knot in $S^3$, $H[R]$ is
a Seifert-fibered space $\mathcal{M}$ over the disk with two exceptional fibers and by the definition of a 2-handle addition, $H[R]$ induces
a genus two Heegaard decomposition of $\mathcal{M}$. The result of \cite{BRZ88} shows that
there are three genus two Heegaard decompositions of $\mathcal{M}$ up to homeomorphism. One of them gives an R-R diagram of Figure~\ref{PSFFig3aa}a, and the other two are symmetric each other so that they boil down to an R-R diagram of Figure~\ref{PSFFig3aa}b.
\end{proof}

\begin{prop}\label{the converse true for cable knots}
Suppose $R$ is a simple closed curve on the boundary of a genus two handlebody $H$
such that $H[R]$ is homeomorphic to the exterior of a cable of a torus knot in $S^3$. Then
$R$ has an R-R diagram with the form in Figure~\emph{\ref{PSFFig3ab}}
with the parameters satisfying the conditions of Case $(3)$ in Lemma~\emph{\ref{diagrams of curves that embed when c = 0}}.
\end{prop}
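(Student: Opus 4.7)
The plan is to parallel the proof of Proposition~\ref{the converse true for torus knots}, replacing the Boileau--Rost--Zieschang classification \cite{BRZ88} of unknotting tunnels of torus knots with the Morimoto--Sakuma classification \cite{MS91} of unknotting tunnels of cables of torus knots. By hypothesis, $H[R]$ is the exterior of a cable knot $k$ in $S^3$, and viewing $H$ together with the 2-handle attached along $R$ as half of a genus two Heegaard splitting of $S^3$ exhibits $k$ as a tunnel-number-one knot whose unknotting tunnel is the cocore of that 2-handle. Hence, up to a homeomorphism of $H$, the curve $R$ is determined by an unknotting tunnel of $k$ as the boundary of the cocore of the dual 1-handle; different tunnels can give rise to different $R$'s, but the whole list of possibilities is governed by \cite{MS91}.

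Next, I would invoke \cite{MS91} to enumerate, up to homeomorphism, the finitely many unknotting tunnels of $k$. For each tunnel class I would use the fact that $k$ carries a cabling annulus $\mathcal{A}$ which decomposes the exterior into a solid torus $V$ and the exterior of the companion $(a+b,m)$ torus knot, and choose cutting disks $\{D_A,D_B\}$ of $H$ adapted to $\mathcal{A}$ as in the proof of Proposition~\ref{torus or cable knot exterior}---with $\partial D_A$ lying on the torus-knot side of $\mathcal{A}$ and $\partial D_B$ meeting $\mathcal{A}$ essentially. Recording $R$ in the resulting R-R diagram places it into the shape of Figure~\ref{PSFFig3ab}. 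Running the analogues of the two claims in the proof of Proposition~\ref{torus or cable knot exterior} in reverse then produces the cabling coordinates $((a+b)n+bm,\,s)$, and matching these to the cable-of-torus-knot presentation of $k$ forces the arithmetic relation $ms(a+b) - [(a+b)n+bm] = \pm 1$ of Case~(3) of Lemma~\ref{diagrams of curves that embed when c = 0}. Primitivity on the solid-torus side of $\mathcal{A}$ and the Seifert structure on the torus-knot side together account for the remaining numerical restrictions $\gcd(a,b) = \gcd(m,n) = 1$, $n > m > 1$, and $s > 1$.

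The hard part will be verifying that every tunnel in the Morimoto--Sakuma list---after an adapted choice of cutting disks---lands exactly in the canonical R-R form of Figure~\ref{PSFFig3ab}, rather than in a diagram with extra bands of connections or a cut-vertex in its underlying Heegaard diagram. When a non-canonical diagram appears, the hybrid-diagram techniques of Section~\ref{hybrid diagrams}, together with changes of cutting disks analogous to those carried out in Lemmas~\ref{may assume (m,n) ?(1,1)} and \ref{may assume m,n not zero}, should reduce it to the required form; dispatching each Morimoto--Sakuma tunnel class in turn, and checking that the parameter constraints match Case~(3) in every instance, is the delicate step of the argument.
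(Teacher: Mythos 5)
Your overall strategy---enumerate the Morimoto--Sakuma tunnel classes and show that each one, with suitably adapted cutting disks, yields a diagram reducible to Figure~\ref{PSFFig3ab}---is a genuinely different route from the paper's, and the step you yourself flag as ``the delicate step'' is a genuine gap. You assert that the cabling annulus $\mathcal{A}$ of $H[R]$ can be used to choose cutting disks of $H$ ``adapted to $\mathcal{A}$,'' but in this converse direction you do not know a priori that $\mathcal{A}$ can be isotoped so as to meet $\partial H$ in two parallel copies of a proper power curve; in Proposition~\ref{torus or cable knot exterior} the annulus was \emph{built from} the given diagram, which is not available here. More seriously, the assertion that every tunnel class ``lands exactly in the canonical R-R form'' after hybrid-diagram moves is precisely what needs to be proved, and ``should reduce it to the required form'' is not an argument. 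As written, the proposal reduces the proposition to an unproved normalization statement of comparable difficulty to the proposition itself.

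The paper sidesteps all of this with a counting argument. It uses \cite{MS91} only for the statement that an $(spq+\delta,s)$ cable of a $(p,q)$ torus knot has exactly \emph{two} unknotting tunnels up to homeomorphism. It then writes down two explicit parameter sets $(m_i,n_i,a_i,b_i)$, $i=1,2$ (one with $p=a_1+b_1$, $q=m_1$, the other with $p=m_2$, $q=a_2+b_2$, each satisfying the Case (3) relation), producing curves $R_1,R_2$ already in the form of Figure~\ref{PSFFig3ab} whose 2-handle additions are the exterior of $k$ by the already-proved forward direction, Proposition~\ref{torus or cable knot exterior}. A comparison of the Whitehead-minimal lengths of their underlying Heegaard diagrams \cite{W36} shows that no homeomorphism of $H$ carries $R_1$ to $R_2$, so $R_1$ and $R_2$ realize \emph{both} tunnel classes, and by pigeonhole any $R$ with $H[R]$ the exterior of $k$ is carried to $R_1$ or $R_2$ by a homeomorphism of $H$. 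If you wish to salvage your direct approach you must supply the normalization argument for an arbitrary tunnel; otherwise the construct-and-count argument is the efficient way through.
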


\begin{proof}
Suppose $k$ is a cable of a torus knot in $S^3$ whose exterior is homeomorphic to $H[R]$.
Note that $k$ is a tunnel-number-one knot such that $R$ is the boundary of a cocore of the 1-handle regular neighborhood of a tunnel. In other words, the curve $R$ corresponds to a tunnel of $k$. By the result of \cite{MS91} $k$ is an $(spq+\delta, s)$ cable of a $(p,q)$ torus knot with $\delta=\pm1$ and has two unknotting tunnels up
to homeomorphism. Here, we may assume that $p,q>1$ and $s>1$.

\begin{claim} \label{no homeo from R1 to R2}
There exist two simple closed curves $R_1$ and $R_2$ on $\partial H$ such that
\begin{enumerate}
\item $H[R_1]$ and $H[R_2]$ are homeomorphic to the exterior of the $(spq+\delta, s)$ cable of a $(p,q)$ torus knot $k$,
\item $R_1$ and $R_2$ have R-R diagrams of the form shown in Figure~\emph{\ref{PSFFig3ab}}, and
\item there is no homeomorphism from $H$ onto itself sending $R_1$ to $R_2$.
\end{enumerate}
\end{claim}

\begin{proof}
For the values $p, q, s$, and $\delta$, there are two sets of integers $(m_1, n_1, a_1, b_1)$ and $(m_2, n_2, a_2, b_2)$ with $m_i, n_i>1$ and $a_i,b_i>0$ for $i=1,2$ such that
\[
\begin{split}
&p=a_1+b_1, q=m_1, n_1(a_1+b_1) + b_1m_1 = sm_1(a_1+b_1) + \delta, \hspace{0.2cm}\text{and}\\
&p=m_2, q=a_2+b_2, n_2(a_2+b_2) + b_2m_2 = sm_2(a_2+b_2) + \delta.
\end{split}
\]

\noindent For the set of integers $(m_i, n_i, a_i, b_i)$ for $i=1,2$, we consider a simple closed curve $R_i$ which has an R-R diagram of the form in Figure~\ref{PSFFig3ab} with $(m,n,a,b)=(m_i,n_i,a_i,b_i)$. Then by Proposition~\ref{torus or cable knot exterior},
$H[R_i]$ is homeomorphic to the exterior of $k$.

To complete the proof of the claim, it remains to show that there is no homeomorphism from $H$ onto itself sending $R_1$ to $R_2$. We can observe from an R-R diagram of the form in Figure~\ref{PSFFig3ab} with $(m,n,a,b)=(m_i,n_i,a_i,b_i)$ that $R_1$ and $R_2$ have the Heegaard diagrams of the form shown in Figures~\ref{PSFFig3af}a and ~\ref{PSFFig3af}b respectively.
Since $p,q>1$ and $s>1$, it follows that $(s-1)pq+\delta>p$ and $(s-1)p\geq p$ for $R_1$, and $(s-1)pq+\delta>q$ and $(s-1)q\geq q$ for $R_2$. This implies that $R_1$ and $R_2$ have minimal lengths which are distinct. Therefore by the result of \cite{W36}, there is no homeomorphism from $H$ onto itself sending $R_1$ to $R_2$.

\begin{figure}[tbp]
\centering
\includegraphics[width = 0.9\textwidth]{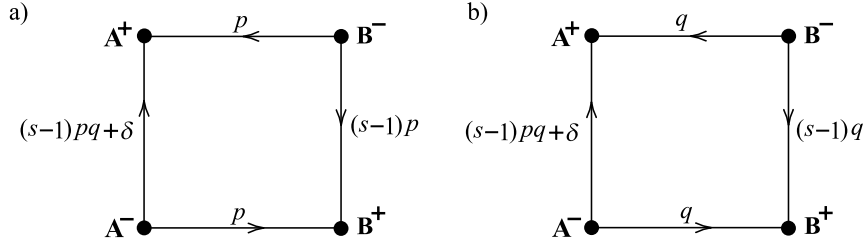}
\caption{The Heegaard diagrams of $R_1$ and $R_2$ underlying the R-R diagrams in Figure~\ref{PSFFig3ab}.}
\label{PSFFig3af}
\end{figure}

\end{proof}

Since $k$ has two unknotting tunnels up to homeomorphism, Claim~\ref{no homeo from R1 to R2} implies that $R_1$ and $R_2$ represent the two tunnels of $k$. Therefore
if $R$ is a simple closed curve on $\partial H$ such that $H[R]$ is homeomorphic to the exterior of $k$, then there exists a homeomorphism from $H$ onto itself
sending $R$ to either $R_1$ or $R_2$, whose corresponding R-R diagrams are of the form in Figure~\ref{PSFFig3ab}. Therefore, $R$ has an R-R diagram with the form in Figure~\ref{PSFFig3ab}.
\end{proof}

Now we give some terminology for R-R diagrams of $R$ with the form in Figures~\ref{PSFFig3aa} and \ref{PSFFig3ab}.

\begin{defn}
[\textbf{Torus or cable knot relators}]
If a simple closed curve $R$ in the boundary of a genus two handlebody $H$ has an R-R diagram of the form shown in Figure~\ref{PSFFig3aa} and thus $H[R]$ is the exterior of a torus knot, then $R$ is said to be \textit{a torus knot relator}. In particular, if $R$ has the diagram in Figure~\ref{PSFFig3aa}a (Figure~\ref{PSFFig3aa}b, resp.), then $R$ is called \textit{a rectangular(non-rectangular, resp.) torus knot relator}.

If a simple closed curve $R$ has an diagram of the form shown in Figure~\ref{PSFFig3ab} and thus $H[R]$ is the exterior of a cable of a torus knot,  then $R$ is called \textit{a cable knot relator}.
\end{defn}
\smallskip

\subsection{Main results: R-R diagrams of tunnel-number-one knot exteriors disjoint from proper power curves}\hfill
\label{R-R diagrams of $R$ disjoint from a proper power curve}
\smallskip

Now we prove Theorem~\ref{main theorem 1}, which is a combination of Proposition~\ref{torus or cable knot exterior} and the following theorem.
Also for convenience, we put a copy of Figure~\ref{DPCFig21b-1} in
Figure~\ref{DPCFig21b}.

\begin{thm}\label{together with a proper power curve}
Suppose $R$ and $\beta$ are disjoint simple closed curves in the boundary of a genus two handlebody $H$ such that $H[R]$ embeds in $S^3$ and $\beta$ is a proper power curve.
Then $R$ and $\beta$ have an R-R diagram with the form shown either in Figure~\emph{\ref{DPCFig21b}a} with $s>1$ or Figure~\emph{\ref{DPCFig21b}b} with the set of parameters $(a,b,m,n,s)$ satisfying the hypothesis of Lemma~\emph{\ref{diagrams of curves that embed when c = 0}} so that $(a,b,m,n,s)$ has the condition \emph{(1), (2),} or \emph{(3)} in Lemma~\emph{\ref{diagrams of curves that embed when c = 0}}.
\end{thm}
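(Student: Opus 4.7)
The plan is to first put $\beta$ into standard position by a choice of cutting disks, next use the disjointness of $R$ and $\beta$ to constrain $R$'s R-R diagram to the form of Figure~\ref{DPCFig21as}, and finally invoke Theorem~\ref{at least one of a, b, c is zero} and Lemma~\ref{diagrams of curves that embed when c = 0}.

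First, I would exploit the definition of a proper power curve to choose a complete set of cutting disks $\{D_A, D_B\}$ of $H$ so that $\beta$ lies in the $B$-handle $F_B$ and the R-R diagram of $\beta$ alone is the single-band form of weight $s \geq 2$ shown in Figure~\ref{DPCFig21b-1}. Concretely, the essential separating disk $E \subset H$ disjoint from $\beta$ splits $H$ into two solid tori $V_1, V_2$ with $\beta \subset \partial V_2$; I take $D_A \subset V_1$ and $D_B \subset V_2$ to be meridian disks, and then, if necessary, change $D_B$ by band-sums with parallel copies of $D_A$ (as in Section~\ref{hybrid diagrams}) to achieve the claimed form, with $\beta$ representing $B^s$ in $\pi_1(H)$.

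Second, since $R$ is disjoint from $\beta$, after an isotopy the arcs of $R \cap F_B$ lie in the annular complement $F_B \setminus N(\beta)$. Every essential arc in that annulus with endpoints on the belt curve $\partial F_B$ is isotopic to a fixed arc, so the collection of such arcs of $R$ merges into a single band of connections in the $B$-handle, with label $(1,1)$ relative to $\beta$'s framing. Closed components of $R \cap F_B$ parallel to $\beta$ can be absorbed into the weight $s$, while boundary-parallel arcs can be removed by isotopy in $\partial H$. Combined with the fact that a once-punctured torus carries at most three nonparallel bands of connections, this forces an R-R diagram for $(R,\beta)$ of the form of Figure~\ref{DPCFig21as}.

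Third, I would invoke the prior results. Lemmas~\ref{may assume (m,n) ?(1,1)} and \ref{may assume m,n not zero} reduce to the case $m, n \neq 0$, $(m,n) \neq (\pm 1, \pm 1)$, $\gcd(m,n) = 1$. Theorem~\ref{at least one of a, b, c is zero} then forces at least one of $a, b, c$ to vanish, so the diagram collapses to the form of Figure~\ref{DPCFig23as}, and Lemma~\ref{diagrams of curves that embed when c = 0} yields one of the three arithmetic conditions (1), (2), (3). Depending on whether the $A$-handle retains three or fewer bands after this reduction, the resulting diagram of $(R,\beta)$ matches Figure~\ref{DPCFig21b-1}a or Figure~\ref{DPCFig21b-1}b.

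The main obstacle I expect is the second paragraph, where one must verify that an arbitrary simple closed curve $R$ disjoint from $\beta$ yields only one band of connections in the $B$-handle of the R-R diagram and that this band carries label $(1,1)$ relative to $\beta$'s framing. This requires a careful analysis of the topology of $F_B$ relative to $\beta$, together with hybrid-diagram moves to absorb closed components of $R \cap F_B$ and to normalize the band labels. The computations in the third paragraph, though nontrivial, follow directly from Lemma~\ref{diagrams of curves that embed when c = 0} once the reduction in the first two paragraphs is complete.
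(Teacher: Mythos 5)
Your overall strategy---normalize $\beta$ to $B^s$ by a choice of cutting disks coming from the definition of a proper power curve, use disjointness from $\beta$ to pin down the $B$-handle of the diagram of $R$, and then feed the result into Theorem~\ref{at least one of a, b, c is zero} and Lemma~\ref{diagrams of curves that embed when c = 0}---is the same as the paper's. But your second step has a genuine gap. Disjointness from $\beta$ only forces the arcs of $R\cap F_B$ to form a single band of connections parallel to $\beta$ (incidentally, $F_B$ cut along $\beta$ is a pair of pants, not an annulus, and that band carries the label $s$, being of the same slope as $\beta$); it says nothing about how those arcs hook up through the planar part of the surface. A priori $R$ can traverse the $s$-band in both directions, which is exactly the more general diagram of Figure~\ref{DPCFig21as-4} with both weights $a$ and $d$ positive, and then the underlying Heegaard diagram is nonpositive and is \emph{not} of the form of Figure~\ref{DPCFig21as}. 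The paper eliminates this configuration before anything else can be applied: if the diagram of $R$ were nonpositive, Theorem~\ref{waves provide meridians} would produce a meridian representative of $H[R]$ by surgery along a vertical wave, and that representative is isotopic to $\beta=B^s$, which is impossible since $H_1$ of a knot exterior in $S^3$ is torsion-free. Nothing in your outline replaces this positivity argument, and without it the reduction to Figure~\ref{DPCFig21as} (and hence the appeal to Theorem~\ref{at least one of a, b, c is zero}) does not go through.

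The final step is also much heavier than ``follows directly.'' After Theorem~\ref{at least one of a, b, c is zero} and the two reduction lemmas, one only knows the diagram has the shape of Figure~\ref{DPCFig21b}b with $ab\geq 0$ and $n, m+n\geq 0$; reaching the precise hypotheses of Lemma~\ref{diagrams of curves that embed when c = 0} (in particular $m,n>0$ and the normalization when $a+b=1$) requires the paper's case analysis: $ab=0$ versus $ab>0$, $n(m+n)>0$ versus $n(m+n)=0$, an orientation-reversing homeomorphism exchanging the two $A$-handle connections when $n>m+n>0$, and a further change of cutting disks via hybrid diagrams (the $A\mapsto AB^{-(\rho+1)s}$ move) when a $0$-connection is present, with several branches collapsing to Figure~\ref{DPCFig21b}a rather than \ref{DPCFig21b}b. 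You should also treat separately, at the outset, the case in which $R$ has no connections in the $B$-handle at all. None of this is an automatic consequence of the cited lemma.
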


\begin{figure}[tbp]
\centering
\includegraphics[width = 0.7\textwidth]{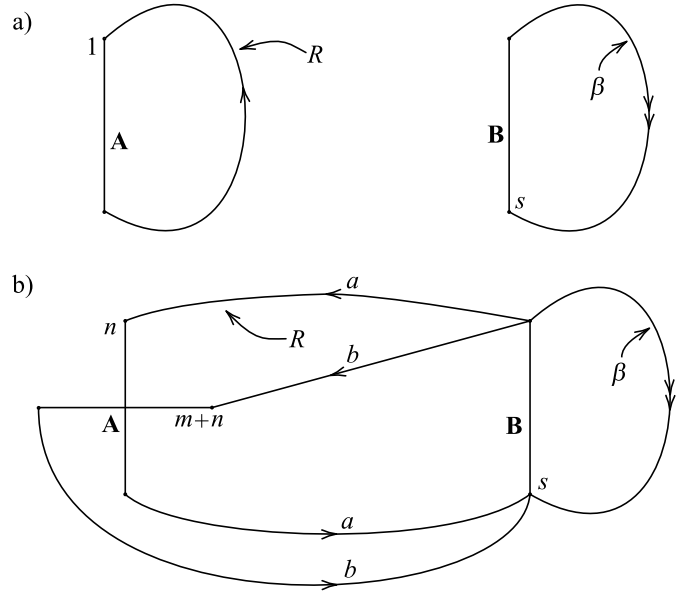}
\caption{All possible R-R diagrams of disjoint simple closed curves $R$ and $\beta$ up to homeomorphism on $H$ such that $H[R]$ embeds in $S^3$ and $\beta$ is a proper power curve.}
\label{DPCFig21b}
\end{figure}

\begin{proof}
By the definition of a proper power curve, there exist an essential separating disk $D_C$
disjoint from $\beta$ and a complete set of cutting disks $\{D_A, D_B\}$ of $H$ such that
$D_C\cap(D_A\cup D_B)=\varnothing$, $\beta\cap D_A=\varnothing$, and $|\beta\cap D_B|=s(>1)$. This implies that with respect to the set of cutting disks $\{D_A, D_B\}$, $\beta$ has an R-R diagram of the form in Figure~\ref{DPCFig21b}.

Suppose a simple closed curve $R$ is added to the R-R diagram of $\beta$. If $R$ has no connections in the $B$-handle, then $R$ has only one connection in the $A$-handle.
Since $H[R]$ embeds in $S^3$, the label of the connection should be $\pm1$. Therefore in this case $R$ and $\beta$ have an R-R diagram which has the form shown in Figure~\ref{DPCFig21b}a up to equivalence.

Now suppose $R$ has connections in both $A$- and $B$-handles. Since $R$ is disjoint from $\beta$, $R$ must have an R-R diagram of the form
shown in Figure~\ref{DPCFig21as-4}, where $m, n\in \mathbb{Z}$ with gcd$(m, n)=1$.
Then $R$ must be positive, otherwise as in the proof of Theorem~\ref{at least one of a, b, c is zero}, one meridian representative of $H[R]$, which is obtained by surgery on $R$ along a vertical wave, is isotopic to $\beta$, a contradiction.

\begin{figure}[tbp]
\centering
\includegraphics[width = 0.7\textwidth]{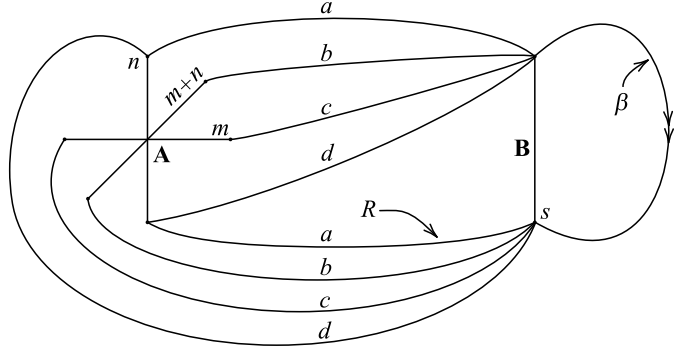}
\caption{An R-R diagram of $R$ which is disjoint from a proper power curve $\beta$ with $[\beta]=B^s$ in $\pi_1(H)$.}
\label{DPCFig21as-4}
\end{figure}

The positivity of $R$ implies that either $a=0$ or $d=0$. We may assume that $d=0$ and thus $R$ has an R-R diagram of the form in Figure~\ref{DPCFig21as} because if $a=0$, then
$R$ has an R-R diagram of the same form in Figure~\ref{DPCFig21as}.

Since $H[R]$ embeds in $S^3$, by Theorem~\ref{at least one of a, b, c is zero} and Lemmas~\ref{may assume (m,n) ?(1,1)} and \ref{may assume m,n not zero}, we may assume that the R-R diagram of $R$ and $\beta$ has the form shown in Figure~\ref{DPCFig21b}b with $ab\geq 0$ and $n, m+n\geq 0$. The condition $n, m+n\geq 0$ follows from the positivity of $R$ and the equivalence of R-R diagrams. To complete the proof, we will show that the R-R diagram of $R$ and $\beta$ in Figure~\ref{DPCFig21b}b with $ab\geq 0$ and $n, m+n\geq 0$
is equivalent to either the R-R diagram of Figure~\ref{DPCFig21b}a or the R-R diagram
of Figure~\ref{DPCFig21b}b with $m,n>0$, in which case the set of parameters $(a,b,m,n,s)$ satisfies the hypothesis of Lemma~\ref{diagrams of curves that embed when c = 0} as desired. Here we note that $m(>0)$ is the parameter obtained by subtracting the label of the vertical connection from the label of the horizontal connection.

First, assume that $ab=0$. We may assume that $b=0$ and thus $a=1$. If $n=0$, then $R=B^s$ in $\pi_1(H)$, a contradiction. If $n>1$, then
this belongs to the case (1) in Lemma~\ref{diagrams of curves that embed when c = 0}. If $n=1$, then $R=AB^s$
and $\beta=B^s$. We perform a change of cutting disk of $H$ inducing an automorphism
of $\pi_1(H)$ which takes $A\mapsto AB^{-s}$. Then $R$ and $\beta$ are carried to $R=A$
and $\beta=B^s$ and it is easy to see that the resulting R-R diagram has the form of Figure~\ref{DPCFig21b}a.

Second, assume that $ab>0$. We divide the argument into two subcases: (i) $n(m+n)>0$
and (ii) $n(m+n)=0$.

(i) Suppose $n(m+n)>0$. If $m=0$, then since $\gcd(m,n) = 1$, $n=1$ and thus $R=AB^{(a+b)s}$, a contradiction. Therefore $m\neq0$, which implies that either $m+n>n>0$ or $n>m+n>0$. If $m+n>n>0$,
then $m>0$ and thus we are done. If $n>m+n>0$, then we perform an orientation-reversing homeomorphism on $H$ which switches the vertical connection labeled by $n$ and the horizontal connection labeled by $m+n$ in the $A$-handle and swaps the arc of weight $a$ and the arc of weight $b$ in the R-R diagram of $R$. Therefore the resulting R-R diagram of $R$ has the vertical connection labeled by $m+n$ and the horizontal connection labeled by $n$ with $n-(m+n)=-m>0$. Therefore the resulting R-R diagram of $R$ belongs to the form of R-R diagram of Figure~\ref{DPCFig21b}b with $m,n>0$.

(ii) Suppose $n(m+n)=0$. Note that both $n$ and $m+n$ cannot be $0$. Therefore without loss of generality we may assume that $n+m=1$ and then $n=0$.
Since $n=0$, there is a $0$-connection in the diagram of $R$. We use the argument of the hybrid diagram.

\begin{figure}[tbp]
\centering
\includegraphics[width = 1\textwidth]{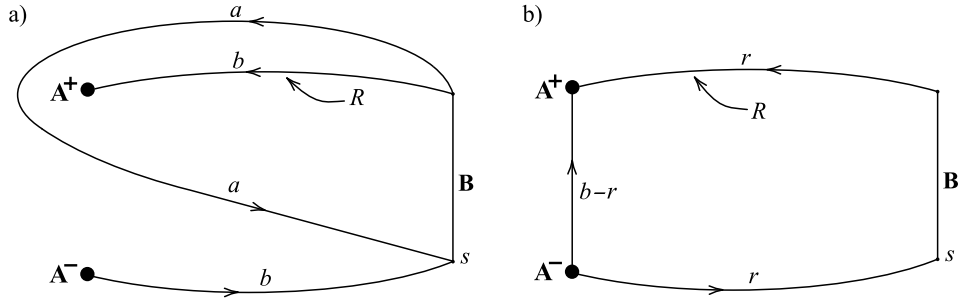}
\caption{The Hybrid diagram corresponding to the R-R diagram in Figure~\ref{DPCFig21b}b with $(m, n)=(1,0)$ and
the hybrid diagram after performing a change of cutting disks inducing an automorphism of $\pi_1(H)$ that takes $A \mapsto AB^{-(\rho+1)s}$. }
\label{PSFFig3ad}
\end{figure}

Its hybrid diagram of $R$ corresponding to the R-R diagram in Figure~\ref{DPCFig21b}b with $(m, n)=(1,0)$ is illustrated in Figure~\ref{PSFFig3ad}a. Let $a=\rho b +r$, where $\rho\geq 0$ and $0\leq r<b$. Then in its hybrid diagram, we drag $\rho+1$ times the vertex $A^-$
together with the edges meeting with the vertex $A^-$ over the $s$-connection on the $B$-handle. This performance
corresponds to a change of cutting disks inducing an automorphism of $\pi_1(H)$ that takes $A \mapsto AB^{-(\rho+1)s}$.
The resulting hybrid diagram of $R$ is depicted in Figure~\ref{PSFFig3ad}b, where there is only one band of connections labelled by $s$ in the $B$-handle.
Also in the $A$-handle, $R$ intersects the cutting disk $D_A$ positively. Note
that the curve $\beta$ remains same under the change of cutting disks.

If $r=0$, then from the equation $a=\rho b +r$, $b=1$ and $a=\rho$, whence $R$ consists of only one edge connecting the vertices $A^+$ and $A^-$ in Figure~\ref{PSFFig3ad}b. This implies that $R$ and $\beta$ have the R-R diagram of Figure~\ref{DPCFig21b}a.

Now suppose $r>0$. The corresponding R-R diagram of $R$ has the form of either Figure~\ref{DPCFig21as} or Figure~\ref{DPCFig21b}b depending on
the number of bands of connections that $R$ has in the $A$-handle. In either case,
since $b-r>0$, there must be bands of connections whose label is greater than $1$, which indicates that there is no $0$-connection in the $A$-handle.
By Theorem~\ref{at least one of a, b, c is zero} and Lemma~\ref{may assume (m,n) ?(1,1)}
we may assume that the R-R diagram of $R$ and $\beta$ has the form of Figure~\ref{DPCFig21b}b. Since there is no $0$-connection in the $A$-handle, we apply
exactly same argument as that in fifth and seventh paragraphs in this proof to conclude that the R-R diagram of $R$ and $\beta$ is equivalent to either the R-R diagram of Figure~\ref{DPCFig21b}a or the R-R diagram of Figure~\ref{DPCFig21b}b with $m,n>0$. This completes the proof.
\end{proof}

By using Theorem~\ref{main theorem 1}, we can prove the following, which is Theorem~\ref{main theorem2}.

\begin{thm}
\label{one consequence}
Suppose $R$ is a nonseparating simple closed curve in the boundary of a genus two handlebody such that $H[R]$ embeds in $S^3$. Then there exists a proper power curve disjoint from $R$ if and only if $H[R]$ is the exterior of the unknot, a torus knot, or a tunnel-number-one cable of a torus knot.
\end{thm}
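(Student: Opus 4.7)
The plan is to derive Theorem~\ref{main theorem2} as a direct consequence of Theorem~\ref{main theorem 1} together with the converse propositions (Propositions~\ref{the converse true for torus knots} and~\ref{the converse true for cable knots}) established in Section~\ref{S: R-R diagrams of torus and cable knots}.

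For the forward implication, I would apply Theorem~\ref{main theorem 1}: given a proper power curve $\beta$ disjoint from $R$, the pair $(R,\beta)$ admits an R-R diagram of one of the two forms in Figure~\ref{DPCFig21b}. In Figure~\ref{DPCFig21b}a the curve $R$ consists of a single arc of weight one in the $A$-handle, so $R$ represents the generator $A$ in $\pi_1(H)$ and is primitive, forcing $H[R]$ to be a solid torus, i.e., the exterior of the unknot. In Figure~\ref{DPCFig21b}b, Proposition~\ref{torus or cable knot exterior} already identifies $H[R]$ as a torus knot exterior in cases (1) and (2), and as a tunnel-number-one cable of a torus knot in case (3).

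For the reverse implication, I would handle the three possibilities separately. If $H[R]$ is a torus knot exterior, Proposition~\ref{the converse true for torus knots} supplies an R-R diagram of $R$ of the form in Figure~\ref{PSFFig3aa}; if $H[R]$ is a cable of a torus knot, Proposition~\ref{the converse true for cable knots} supplies an R-R diagram of $R$ of the form in Figure~\ref{PSFFig3ab}. In both diagrams the curve $\beta$ lying in the $B$-handle and representing $B^s$ with $s>1$ is visibly disjoint from $R$, is disjoint from the essential separating disk bounded by the belt curve separating the two handles, does not bound a disk in $H$, and is not primitive; hence $\beta$ is a proper power curve. For the unknot case, not directly covered by those converse propositions, I would use the fact that $R$ is primitive to choose cutting disks $\{D_A,D_B\}$ with $|R \cap D_A|=1$ and $R \cap D_B=\emptyset$, giving $R$ an R-R diagram of the form in Figure~\ref{DPCFig21b}a, so that $\beta = B^s$ for any $s \geq 2$ in the $B$-handle is again a proper power curve disjoint from $R$.

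The main obstacle is negligible here, since Theorem~\ref{main theorem 1} and the converse propositions do all the substantive work. The only routine verifications required are checking in each reverse case that the displayed $\beta$ satisfies the three defining properties of a proper power curve; this is immediate from the R-R diagrams and from the observation that a belt curve of a genus two R-R diagram bounds an essential separating disk in $H$.
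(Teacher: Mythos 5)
Your proposal is correct and follows essentially the same route as the paper: the forward direction is read off from Theorem~\ref{main theorem 1} (together with Proposition~\ref{torus or cable knot exterior}, whose conclusions are already built into that theorem's statement), and the reverse direction invokes Propositions~\ref{the converse true for torus knots} and~\ref{the converse true for cable knots} for the torus-knot and cable cases and primitivity of $R$ for the unknot case to exhibit the diagrams of Figure~\ref{DPCFig21b}, in which $\beta = B^s$ with $s>1$ is a visible proper power curve disjoint from $R$. Your explicit verification that $\beta$ satisfies the three defining properties of a proper power curve is a detail the paper leaves implicit, but it is routine and does not constitute a different argument.
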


\begin{proof}
The ``if" part immediately follows from Theorem~\ref{main theorem 1}.

For the ``only if" part, assume that $H[R]$ is the exterior of the unknot, a torus knot, or a tunnel-number-one cable of a torus knot. For the last two cases, by Propositions~\ref{the converse true for cable knots} and \ref{the converse true for torus knots} $R$ has an R-R diagram of the form shown in Figure~\ref{DPCFig21b}b with $a,b\geq 0$ and $s>1$. Hence there exists a proper power curve disjoint from $R$.

If $H[R]$ is the exterior of the unknot, then $H[R]$ is a solid torus, which implies that $R$ is primitive in $H$. Therefore, there exists a complete set of cutting disks $\{D_A, D_B\}$ of $H$ such that $R$ has an R-R diagram of the form shown in Figure~\ref{DPCFig21b}a with $s>1$. So there exists a proper power curve disjoint from $R$, completing the proof.
\end{proof}



\begin{thebibliography}{XXXXX}

\bibitem[B09]{B09}
	Berge, J.,
	\emph{A classification of pairs of disjoint nonparallel primitives in the boundary of a genus two handlebody}, arXiv:0910.3038.
    preprint.

\bibitem[B20]{B20}
	Berge, J.,
	\emph{Distinguished waves and slopes in genus two},
    preprint.

\bibitem[B93]{B93}
	Berge, J.,
	\emph{Embedding the Exteriors of One-Tunnel Knots and Links in the 3-Sphere},
	Unpublished transparencies of invited address at Cascade Topology Conf. Spring 1993.

\bibitem[B90]{B90}
	Berge, J.,
	\emph{Some Knots with Surgeries Yielding Lens Spaces},
	Unpublished manuscript. Univ. of Texas at Austin, 1990.

\bibitem[B18]{B18}
	Berge, J.,
	\emph{Some Knots with Surgeries Yielding Lens Spaces},
	arXiv preprint arXiv:1802.09722, 2018. original unpublished work from 1990.

\bibitem[B08]{B08}
	Berge, J.,
	\emph{The simple closed curves in genus two Heegaard surface of $S^3$ which are double-primitives},
	Unpublished manuscript.

\bibitem[BK20]{BK20}
   Berge, J. and Kang, S. \emph{The hyperbolic primitive/Seifert knots in $\mathrm{S^3}$},
    preprint.

\bibitem[BRZ88]{BRZ88}
	Boileau, M., Rost, M., and Zieschang, H.,	
	\emph{On Heegaard decompositions of torus knot exteriors and related Seifert fibre spaces.}
	Math. Ann. \textbf{279} (1988), 553--581.

\bibitem[D03]{D03}
	Dean, J.,
	\emph{Small Seifert-fibered Dehn surgery on hyperbolic knots},
	Algebraic and Geometric Topology \textbf{3} (2003), 435--472.	

\bibitem[EM92]{EM92}	
	Eudave-Mu\~noz, M.,
	\emph{Band sums of links which yield composite links. The cabling conjecture for strongly invertible knots},
	Trans. Amer. Math. Soc. \textbf{330} no. 2, (1992),  463--501.

\bibitem[EM02]{EM02}	
	M. Eudave-Mu\~noz,
	\emph{On hyperbolic knots with Seifert fibered Dehn surgeries},
	Topology Appl. \textbf{121} (2002), 119--141.

\bibitem[G13]{G13}
	Green, J. E.,
	\emph{The lens space realization problem},
	Ann. of Math. (2) \textbf{177} (2013), 449--511.	

\bibitem[HOT80]{HOT80}
	Homma, T., Ochiai, M. and Takahashi, M.,
	\emph{An Algorithm for Recognizing $S^3$ in 3-Manifolds with Heegaard Splittings of Genus Two},
	Osaka J. Math. \textbf{17} (1980), 625--648.

\bibitem[K14]{K14}
	Kang, S.,
	\emph{Knots in $S^3$ admitting graph manifold Dehn surgeries},
	J. Korean Math. Soc. 51 (2014), No. 6, 1221--1250.

\bibitem[K20]{K20}
   Kang, S. \emph{Primitive, proper power, and Seifert curves in the boundary of a genus two handlebody},
    preprint.

\bibitem[MM05]{MM05}
	Miyazaki, K. and  Motegi, K.,
	\emph{On primitive/Seifert-fibered constructions},
	Math. Proc. Camb. Phil. Soc. \textbf{138} (2005), 421-435.

\bibitem[MOS71]{MOS71}
	 Moser, L.,
	 \emph{Elementary surgery along a torus knot},
	 Pacific J. Math. \textbf{38} (1971) 737--745.	

\bibitem[MS91]{MS91}
	 Marimoto, K., Sakuma, M.,
	 \emph{On unknotting tunnel for knots},
	 Math. Ann. \textbf{289} (1991) 143--167.	 		

\bibitem[O79]{O79}
	Ochiai, M.,
	\emph{Heegaard-Diagrams and Whitehead-Graphs},
	Math. Sem. Notes of Kobe Univ. \textbf{7} (1979), 573--590.	

\bibitem[OS77]{OS77}
	Osborne, R. P. and Stevens, R. S.,
	\emph{Group Presentations Corresponding to Spines of 3-Manifolds II},
	Trans. Amer. Math. Soc. \textbf{234} (1977), 213--243.

\bibitem[W36]{W36}
	Whitehead, J. H. C.,
	\emph{On Equivalent Sets of Elements in a Free Group},
	Ann. of Math. \textbf{37} (1936), 782--800.	

\bibitem[Z62]{Z62}
	Zieschang, H.,
	\emph{\"Uber einfache Kurven auf Vollbrezeln},
	Abn. Math. Sem. Univ. Hamburg \textbf{25} (1962), 231--250.	

\bibitem[Z63a]{Z63a}
	Zieschang, H.,
	\emph{\"Uber einfache Kurvensysteme auf Vollbrezel vom Geschlect 2},
	Abn. Math. Sem. Univ. Hamburg \textbf{26} (1963), 237--247.	

\bibitem[Z63b]{Z63b}
	Zieschang, H.,
	\emph{Classification of Simple Systems of Paths on a Solid Pretzel of Genus 2},
	Soviet Math. \textbf{4} (1963), 1460--1463.	Transl. of Doklady Acad. Sci. USSR 		\textbf{152} (1963), 841--844.

\bibitem[Z88]{Z88}
	Zieschang, H.,
	\emph{On Heegaard Diagrams of 3-Manifolds},
	Ast\'erisque \textbf{163-164} (1988), 247--280.
			
\end{thebibliography}
\end{document}